\definecolor{chianti}{rgb}{0.6,0,0}
\definecolor{meretale}{rgb}{0,0,.6}
\definecolor{leaf}{rgb}{0,.35,0}
\theoremstyle{definition}
\newtheorem{theorem}{Theorem}[section]
\newtheorem{corollary}[theorem]{Corollary}
\newtheorem{lemma}[theorem]{Lemma}
\newtheorem{proposition}[theorem]{Proposition}
\newtheorem{notation}[theorem]{Notation}
\theoremstyle{definition}
\newtheorem{definition}[theorem]{Definition}
\newtheorem{example}[theorem]{Example}
\newtheorem{remark}[theorem]{Remark}
\numberwithin{equation}{subsection}
\newcommand{\m}{\mathfrak{m}}
\newcommand{\RR}{\mathbb{R}}
\newcommand{\NN}{\mathbb{N}}
\renewcommand{\P}{\mathbb{P}}
\newcommand{\N}{\mathbb{N}}
\newcommand{\C}{\mathbb{C}}
\newcommand{\ZZ}{\mathbb{Z}}
\newcommand{\Z}{\mathbb{Z}}
\newcommand{\F}{\mathbb{F}}
\newcommand{\cF}{\mathcal{F}}
\newcommand{\cA}{\mathcal{A}}
\newcommand{\cB}{\mathcal{B}}
\newcommand{\cC}{\mathcal{C}}
\newcommand{\cD}{\mathcal{D}}
\newcommand{\cJ}{\mathcal{J}}
\newcommand{\cBb}{\mathcal{B}^\bullet}
\newcommand{\cFb}{\mathcal{F}^\bullet}
\newcommand{\cGb}{\mathcal{G}^\bullet}
\newcommand{\BS}[2]{\mathrm{BS}^{#2}_{#1}}
\newcommand{\SBS}[2]{\mathrm{SBS}^{#2}_{#1}}
\newcommand{\Dim}{\operatorname{Dim}}
\newcommand{\ee}{\operatorname{e}}
\newcommand{\Spec}{\operatorname{Spec}}
\newcommand{\Hom}{\operatorname{Hom}}
\newcommand{\End}{\operatorname{End}}
\newcommand{\fs}{\boldsymbol{f^s}}
\newcommand{\Q}{\mathbb{Q}}
\newcommand{\e}{\operatorname{e}}
\newcommand{\op}{\mathrm{op}}
\newcommand{\Der}{\mathrm{Der}}
\def\ord{\operatorname{ord}}
\newcommand{\Frac}{\mathrm{Frac}}
\newcommand{\mSpec}{\operatorname{mSpec}}
\author[Jeffries]{Jack Jeffries}
\address{Department of Mathematics, University of Nebraska, Lincoln, NE 68588-0130, USA}
\email{jack.jeffries@unl.edu}
\thanks{JJ and DL were partially supported by NSF CAREER Award DMS-2044833.}
\author[Lieberman]{David Lieberman}
\address{Department of Mathematics, University of Nebraska, Lincoln, NE 68588-0130, USA}
\email{david.lieberman@huskers.unl.edu}
\begin{document}
\title[Sandwich Bernstein-Sato Polynomials]{Sandwich Bernstein-Sato polynomials and Bernstein's inequality}

\maketitle

\begin{abstract} Bernstein's inequality is a central result in the theory of $D$-modules on smooth varieties. While Bernstein's inequality fails for rings of differential operators on general singularities, recent work of \`{A}lvarez Montaner, Hern\'andez, Jeffries, N\'u\~nez-Betancourt, Teixeira, and Witt establishes Bernstein's inequality for invariants of finite groups in characteristic zero and certain other mild singularities in positive characteristic. Motivated by extending this result to new classes of singular rings, we introduce a ``two-sided'' analogue of the Bernstein-Sato polynomial which we call the sandwich Bernstein-Sato polynomial. We apply this notion to give an effective criterion to verify Bernstein's inequality, and apply this to show that Bernstein's inequality holds for the coordinate ring of $\mathbb{P}^a \times \mathbb{P}^b$ via the Segre embedding. We also establish a number of examples and basic results on sandwich Bernstein-Sato polynomials.
\end{abstract}

\section{Introduction}

Recently, a number of key results from the theory of $D$-modules on smooth varieties have been extended to $D$-modules over sufficiently mild singularities. Throughout this introduction, let $R$ be an algebra over a field $K$ of characteristic zero, and $D_R$ the ring of $K$-linear differential operators in the sense of Grothendieck; when $R$ is a polynomial ring, this is the familiar \emph{Weyl algebra} generated over $R$ by the partial derivatives with respect to the variables. 

A {\textit{Bernstein-Sato functional equation}} for an element $f\in R$ is an equation of the form
\[ \delta(s) \cdot f^{s+1} = b(s) f^s\]
for some $\delta(s)\in D_R[s]$ and $b(s)\in K[s]$. The existence of a nonzero Bernstein-Sato functional equation for any nonzero element when $R$ is smooth is one of the fundamental results in the theory of $D$-modules, and the study of the minimal monic $b(s)$ arising in an equation as above for a fixed $f$ (i.e., the \textit{Bernstein-Sato polynomial of $f$}) has played an important role in singularity theory and algebraic geometry \cite{BernsteinPoly,Mal74,Mal83,ELSV04,MTW}. Work of \`Alvarez Montaner, Huneke, and N\'u\~nez-Betancourt \cite{AMHNB} extended this existence result to $K$-algebras that are direct summands of polynomial rings, a class of rings that includes invariant rings of linearly reductive groups.

 {\textit{Bernstein's inequality}} in the theory of $D$-modules asserts that every nonzero $D_R$-module over a smooth $K$-algebra $R$ has dimension at least $\dim(R)$, for a suitable sense of dimension akin to the Krull dimension of a module over a commutative ring. This result has many strong consequences; in particular, it easily implies the existence of Bernstein-Sato polynomials, and that local cohomology modules of $R$ have finite length as $D_R$-modules. Recent work of \`Alvarez Montaner, Hern\'andez, Jeffries, N\'u\~nez-Betancourt, Teixeira, and Witt extended this result to rings of invariants of finite groups, as well as a wider class of algebras in positive characteristic \cite{AHJNTW2}. 
 
The work in this paper is motivated by extending Bernstein's inequality to new classes of singular rings and studying the relationship between Bernstein's inequality and Bernstein-Sato theory. These considerations motivate the following definition: a  {\textit{sandwich Bernstein-Sato functional equation}} for an element $f\in R$ is an equation of the form
\[ \sum \alpha_i(s)  f^{s+1} \beta_i(s) = b(s) f^s\]
for some $\alpha_i(s), \beta_i(s)\in D_R[s]$ and $b(s)\in K[s]$. Here we consider $f$ as a differential operator of order zero and regard this as an equality of \emph{differential operators} rather than of ring elements under the module action.

Our main result applies this notion of sandwich Bernstein-Sato equation to give an effective criterion for $R$ to satisfy Bernstein's inequality.

\begin{theorem}[Corollary~\ref{thm-main-BI}, cf.~Remark~\ref{rem:BIintro}]\label{thm1.1} Let $R$ be a finitely generated $\N$-graded algebra over a field $K$ of characteristic zero. Suppose that there exists some nonzerodivisor $f$ in the singular locus of $R$ that admits a sandwich Bernstein-Sato polynomial with no nonnegative integer roots. Then Bernstein's inequality holds for $D_R$:~every nonzero $D_R$-module $M$ has dimension at least $\dim(R)$.
\end{theorem}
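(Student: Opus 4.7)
The plan is to split into cases based on whether the $f$-torsion submodule $\Gamma_f(M) := \bigcup_{k \geq 0} \ker(f^k : M \to M)$ exhausts $M$, and handle each case with one of the two hypotheses on $f$. Throughout, a $D_R$-module is understood as one carrying a good filtration for which the dimension is defined; the $\N$-graded hypothesis on $R$ provides this for any finitely generated $D_R$-module.

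The first key observation is that a sandwich Bernstein-Sato polynomial $b(s)$ with no nonnegative integer roots forces any $D_R$-module $N$ with $f^k N = 0$ (for some $k \geq 1$) to be zero. Concretely, specializing the sandwich identity $\sum_i \alpha_i(s) f^{s+1} \beta_i(s) = b(s) f^s$ at $s = k-1$ and applying both sides to an arbitrary $n \in N$ yields
\[
 b(k-1)\, f^{k-1} n \;=\; \sum_i \alpha_i(k-1)\, f^{k}\, \beta_i(k-1)\, n \;=\; 0,
\]
since each $\beta_i(k-1) n$ lies in $N$ and is annihilated by $f^k$. As $b(k-1) \neq 0$ by hypothesis, $f^{k-1} N = 0$, and iterating downward forces $N = 0$. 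A standard argument using that commutators $[f, \delta]$ strictly lower the order of a differential operator $\delta$ shows that $\Gamma_f(M)$ is a $D_R$-submodule of $M$; moreover, when $M$ is finitely generated, $M = \Gamma_f(M)$ forces the existence of a uniform $N_0$ with $f^{N_0} M = 0$.

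The second key observation is that, because $f$ vanishes on the singular locus of $R$, the localization $R_f$ is regular, hence smooth over the characteristic-zero field $K$, with $\dim(R_f) = \dim(R)$ since $f$ is a nonzerodivisor and therefore lies outside every minimal prime. Consequently the classical Bernstein inequality applies on $\Spec(R_f)$: any nonzero finitely generated $D_{R_f}$-module $M'$ satisfies $\dim_{D_{R_f}}(M') \geq \dim(R_f) = \dim(R)$.

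Combining these, let $M$ be a nonzero finitely generated $D_R$-module. If $M = \Gamma_f(M)$, then the first observation yields $M = 0$, contradicting the hypothesis. Otherwise $M_f \neq 0$ (the quotient $M / \Gamma_f(M)$ is $f$-torsion free and nonzero, so remains nonzero after localization), and the second observation gives $\dim_{D_{R_f}}(M_f) \geq \dim(R)$. To conclude one still needs the inequality $\dim_{D_R}(M) \geq \dim_{D_{R_f}}(M_f)$, which should follow from flatness of the localization $D_R \to D_{R_f}$ at the level of good filtrations. This final dimension comparison is where I anticipate the main technical obstacle: one must set up good filtrations in the singular setting so that localization at the nonzerodivisor $f$ cannot increase $D$-module dimension. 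The $\N$-graded hypothesis is likely crucial here, furnishing canonical filtrations on $D_R$ and on finitely generated $D_R$-modules with which the comparison becomes tractable.
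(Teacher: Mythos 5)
Your first observation --- that the sandwich functional equation forces $N=0$ for any $D_R$-module $N$ that is \emph{uniformly} annihilated by a single power $f^k$ --- is correct and a nice use of the hypothesis. But the reduction ``$M=\Gamma_f(M)$ and $M$ finitely generated $\Rightarrow$ $f^{N_0}M=0$ for some $N_0$'' is false, and that breaks Case 1. Over the Weyl algebra $D_{K[x]}$, the cyclic module $H^1_{(x)}(K[x])\cong K[x,x^{-1}]/K[x]$ is $x$-torsion, yet $x^n\cdot\overline{x^{-n-1}}\neq 0$ for every $n$, so no uniform power works. The same happens under the hypotheses of the theorem: take $R=K[x^2,xy,y^2]$ and $f=x^2\in\m$ (the singular locus); then $\SBS{f}{R}(s)=(s+1)(s+\tfrac12)$ has no nonnegative integer roots, but $H^2_{\m}(R)$ is a nonzero $f$-torsion $D_R$-module of finite length. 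So $M=\Gamma_f(M)$ emphatically does \emph{not} force $M=0$, and your dichotomy leaves the $f$-torsion case (which is precisely where the most interesting $D$-modules, local cohomology, live) entirely unaddressed.

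Case 2 has a second genuine gap, as you correctly anticipate, but it is not merely a technical filling-in: you are comparing the Bernstein-filtration dimension of $M$ over the graded ring $R$ with the dimension used in the classical Bernstein inequality over the smooth but ungraded ring $R_f$, which is typically the characteristic-variety/order-filtration dimension. Comparing these across localization is a substantial problem, and the standard route to ``localization does not increase $D$-module dimension'' in the smooth setting goes \emph{through} Bernstein--Sato theory, so invoking it here would risk circularity. The paper sidesteps both problems by a fundamentally different route: it uses the SBS hypothesis together with linear simplicity of $D_{R_f}$ (available since $R_f$ is smooth, Proposition~\ref{prop:smooth}) to prove linear simplicity of $(D_{R|K},\cBb)$ directly (Theorem~\ref{SBSimpliesLS}); this is a statement about the filtered ring $D_R$ as a bimodule, not about individual left modules, so the torsion/non-torsion distinction never arises. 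Combined with positive differential signature (Proposition~\ref{prop-divides} and Theorem~\ref{thm:diffsig}) and the general machinery of Bernstein algebras (Proposition~\ref{prop:posdiff}), Bernstein's inequality then follows for all nonzero modules at once.
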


We apply our criterion to show that coordinate rings of Segre embeddings of the product of projective spaces satisfy Bernstein's inequality.

\begin{theorem}[Theorem~\ref{thm:Segre}]
Let $R$ be the coordinate ring of $\P^a \times\P^b$ via the Segre embedding. Then Bernstein's inequality holds for $D_R$:~every nonzero $D_R$-module $M$ has dimension at least $\dim(R)$.
\end{theorem}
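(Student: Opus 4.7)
The plan is to apply the criterion of Theorem~\ref{thm1.1} to the element $f=x_0 y_0\in R$, using the realization $R=S^{\mathbb{G}_m}$ with $S:=K[x_0,\dots,x_a,y_0,\dots,y_b]$ and $\mathbb{G}_m$ acting with weight $+1$ on the $x_i$ and weight $-1$ on the $y_j$. Two of the three hypotheses are immediate: $R$ is a domain, so $f$ is a nonzerodivisor; and the singular locus of $\Spec R$ is the vertex of the affine cone on $\P^a\times\P^b$, cut out by the homogeneous maximal ideal $\m\ni f$, so $f$ lies in the (ideal of the) singular locus. The content is thus producing a sandwich Bernstein-Sato polynomial for $f$ with no nonnegative integer roots.

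The guiding identity is the classical $\partial_{x_0}\partial_{y_0}\cdot(x_0y_0)^{s+1}=(s+1)^2(x_0y_0)^s$ in the smooth ambient ring $S$. The operator $D:=\partial_{x_0}\partial_{y_0}$ has $\mathbb{G}_m$-weight zero and so descends to $D_R$, but this display is only a module identity, not an operator identity, so it does not immediately yield a sandwich equation. A direct commutator computation in the extended ring $D_S[s]\lr{f^s}$ (with $[\partial,f^k]=kf^{k-1}\partial(f)$) produces the operator identity
\[ Df^{s+1}+f^{s+1}D-\partial_{y_0}f^{s+1}\partial_{x_0}-\partial_{x_0}f^{s+1}\partial_{y_0}=(s+1)^2 f^s. \]
Each of the four sandwich terms on the left has weight zero as a composition, but in the last two terms the inner factors $\partial_{x_0}$ and $\partial_{y_0}$ individually carry nonzero $\mathbb{G}_m$-weight and so do not lie in $D_R$.

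The main obstacle is to convert this $D_S[s]$-sandwich identity into a genuine $D_R[s]$-sandwich identity, i.e., into a sum $\sum_i\alpha_i(s) f^{s+1}\beta_i(s)=(s+1)^2 f^s$ with each $\alpha_i,\beta_i\in D_R[s]$. My plan is to exploit the fact that the whole identity is weight zero, so projection onto the weight-zero part of the extended ring is the identity on it, while individual offending summands can be redistributed by using the commutation relations and the relation $x_0 y_0 = f$ to repackage every sandwich factor in terms of the weight-zero generators $\partial_{x_i}\partial_{y_j}$, $x_i\partial_{x_j}$, $y_i\partial_{y_j}$, and $x_ky_\ell$ of $(D_S)^{\mathbb{G}_m}\subseteq D_R$. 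Concretely, each offending $\partial_{x_0}$ in an inner factor is paired with a matching $y_j$ (and similarly $\partial_{y_0}$ with $x_i$) on the opposite side of $f^{s+1}$, with the bookkeeping adjustments absorbed into further sandwich terms.

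The resulting sandwich Bernstein-Sato polynomial of $f$ in $D_R$ is a nonzero scalar multiple of $(s+1)^2$, whose unique root is $s=-1$; in particular it has no nonnegative integer roots, and Theorem~\ref{thm1.1} yields Bernstein's inequality for $D_R$.
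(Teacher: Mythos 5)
Your overall strategy matches the paper's: take $f=x_1y_1$ (your $x_0y_0$), verify the singular-locus and nonzerodivisor conditions, exhibit a sandwich Bernstein-Sato equation with no nonnegative integer roots, and apply Corollary~\ref{thm-main-BI}. The identity you write down,
\[
\partial_{x_0}\partial_{y_0} f^{s+1}+f^{s+1}\partial_{x_0}\partial_{y_0}-\partial_{y_0}f^{s+1}\partial_{x_0}-\partial_{x_0}f^{s+1}\partial_{y_0}=(s+1)^2 f^s,
\]
is a correct sandwich equation in $D_{S|K}[s]$. But the ``repackaging'' step, where you assert that the offending $\partial_{x_0}$ and $\partial_{y_0}$ factors can be redistributed into $D_R$-operators without changing the polynomial $b(s)=(s+1)^2$, is where the argument breaks. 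The paper proves (by reduction mod $p$ together with Proposition~\ref{prop:charp-to-roots}) that $-a$ and $-b$ \emph{are} roots of $\SBS{x_1y_1}{R}(s)$, so in particular $\SBS{x_1y_1}{R}(s)$ does not divide $(s+1)^2$. Hence no $D_R[s]$-sandwich equation with $b(s)=(s+1)^2$ can exist, and the repackaging you propose cannot succeed as stated.

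The conceptual source of the obstruction is the relation $\sum_{i=1}^a x_i\partial_{x_i}=\sum_{j=1}^b y_j\partial_{y_j}$, which is one of the defining relations presenting $D_R$ as a quotient of the $\mathbb{G}_m$-invariants of $D_S$. When you push your $D_S$-identity down to $D_R$, the weight-zero building blocks you must use ($x_i\partial_{x_j}$, $y_i\partial_{y_j}$, $\partial_{x_i}\partial_{y_j}$) are tied together by this Euler relation, and eliminating the residual $x_1\partial_{x_1}$ and $y_1\partial_{y_1}$ terms from the right side of the identity forces extra factors $(s+a)$ and $(s+b)$. This is exactly what the paper's iterated commutator computation with the operators $\cA_i$ and $\cB_j$ produces: a functional equation with polynomial $(s+1)^3(s+a)(s+b)$. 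Since all roots are still strictly negative, your conclusion does follow once the computation is carried out honestly — but the polynomial you would obtain is a proper multiple of $(s+1)^2$, and the specific redistribution mechanism you sketch must be replaced by the longer commutator calculus and the Euler-relation bookkeeping. In short: right element, right criterion, wrong sandwich polynomial, and the conversion step is a genuine computational gap, not a formality.
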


A main technical tool used in this work is the notion of  {\textit{linear simplicity}} of a filtered algebra arising in work of Bavula and Lyubeznik \cite{Bav96,Bav09,Lyu11}; this is a condition on the algebraic structure of a filtration of $D_R$ that is a sharpening of the condition that $D_R$ is a simple ring. For a graded algebra $R$, there is a family of filtrations on $D_R$ that we refer to as \emph{Bernstein filtrations} that generalize the Bernstein filtration on the ring of differential operations on a polynomial ring (see Subsubsection~\ref{sssec:BF}); linear simplicity of this filtration, with mild additional hypotheses, implies Bernstein's inequality (see Subsection~\ref{ssec:BI} below). It remains open whether rings of differential operators are linearly simple (with respect to the Bernstein filtration) for all graded direct summands of polynomial rings; a positive answer would imply a well-studied conjecture of Levasseur, Stafford, and Schwarz \cite{LS,Schwarz} on simplicity of rings of differential operators for rings of invariants of linearly reductive groups in characteristic zero.

We obtain the following strong relationship between the existence of sandwich Bernstein-Sato functional equations and linear simplicity:
\begin{theorem}[Theorems~\ref{thm:LSimpliesSBS} and~\ref{SBSimpliesLS} and Corollary~\ref{cor:toric}]
    Let $R$ be a finitely generated $\N$-graded $K$-algebra, with $R_0=K$ a field of characteristic zero.
    \begin{enumerate}
        \item If there exists a nonzerodivisor $f$ in the singular locus of $R$ that admits a sandwich Bernstein-Sato polynomial with no nonnegative integer roots, then $D_R$ is linearly simple with respect to any Bernstein filtration.
        \item If $R$ is an affine semigroup ring and $D_R$  with respect to any Bernstein filtration is linearly simple, then every nonzero element of $R$ admits a nonzero sandwich Bernstein-Sato polynomial.
        \item[(2$^\prime$)] More generally, if $D_R\otimes_K D_R^{\op}$ with respect to any Bernstein filtration is linearly simple, then every nonzero element of $R$ admits a nonzero sandwich Bernstein-Sato polynomial.
    \end{enumerate}
\end{theorem}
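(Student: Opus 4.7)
The plan is to address the three parts by related but distinct arguments. For part (1), I would extract linear simplicity directly from the sandwich Bernstein-Sato equation $\sum_i \alpha_i(s) f^{s+1} \beta_i(s) = b(s) f^s$ by substituting nonnegative integer values of $s$. The hypothesis that $b(s)$ has no nonnegative integer roots ensures that each substitution $s = n$ yields a genuine identity in $D_R$ with $b(n) \neq 0$, exhibiting $f^n \in D_R f^{n+1} D_R$ via operator coefficients whose Bernstein filtration degrees are uniformly bounded in $n$ (in terms of the $s$-degrees of $\alpha_i, \beta_i$ and their orders as differential operators). In particular, the case $n=0$ gives a bounded-degree expression of $1$ as an element of $D_R f D_R$, which is the linear simplicity estimate at the single operator $f$. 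To extend this to an arbitrary nonzero $P \in F_n D_R$, the singular-locus hypothesis on $f$ is essential: since $R_f$ is regular, $D_{R_f}$ is a Weyl-type algebra, and a commutator reduction inside the two-sided ideal $D_R P D_R$ should produce, after bounded-degree manipulations, an operator proportional to a power of $f$. Combining with the previous step then yields the required linear simplicity function for $D_R$.

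Parts (2) and (2$^\prime$) reverse the direction: given linear simplicity, produce a sandwich Bernstein-Sato equation for an arbitrary nonzero $f \in R$ by polynomial interpolation. For each $n \in \N$, linear simplicity of $D_R$ in (2), or of $D_R \otimes_K D_R^{\op}$ in (2$^\prime$), applied to $f^{n+1}$ supplies operators $\alpha_{i,n}, \beta_{i,n}$ with $\sum_i \alpha_{i,n} f^{n+1} \beta_{i,n} = 1$ whose filtration degrees are controlled polynomially in $n$; multiplying by $f^n$ on an appropriate side yields a pointwise identity $f^n = \sum_i A_{i,n} f^{n+1} B_{i,n}$ with operator coefficients of polynomially bounded degree. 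In the affine semigroup case of (2), the torus grading on $D_R$ further restricts these operators to finite-dimensional weight components compatible with the weight of $f$, which both supplies polynomial-degree uniformity and explains why $D_R$-linear simplicity alone suffices without the bimodule version; in (2$^\prime$) the linear simplicity of $D_R \otimes_K D_R^{\op}$ plays the analogous role without requiring a torus. Once the coefficient operators have been shown to lie in fixed finite-dimensional subspaces for infinitely many $n$, interpreting the evaluation maps $(a,b) \mapsto a f^{n+1} b$ as polynomial-in-$n$ families of linear maps and applying a Vandermonde-type interpolation assembles the pointwise identities into a polynomial identity $\sum \alpha_i(s) f^{s+1} \beta_i(s) = b(s) f^s$ with $\alpha_i(s), \beta_i(s) \in D_R[s]$ and $b(s) \in K[s]$ nonzero.

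The main obstacle throughout is filtration degree control. In (1), the commutator reduction from an arbitrary nonzero $P$ to a power of $f$ must yield bounds depending only on the Bernstein degree of $P$, which forces careful orchestration with the singular-locus hypothesis so that iterated commutator passes do not blow up. In (2) and (2$^\prime$), the interpolation step requires that the linear-simplicity bound grow at most polynomially in $n$ when applied to $f^{n+1}$, and that infinitely many of the resulting coefficient operators fit into fixed finite-dimensional subspaces of $D_R$; the affine semigroup structure (respectively, the bimodule linear simplicity hypothesis) supplies exactly this compatibility, and verifying it carefully — particularly in case (2$^\prime$), where no torus is available — is likely to be the technical crux of the argument.
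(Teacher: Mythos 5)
Your sketch for part (1) is essentially the paper's argument: use the SBS functional equation (evaluated at $s = 0, 1, \dots, t-1$ and composed) to produce $1 \in B^{mt} f^t B^{mt}$; use regularity of $R_f$ to get linear simplicity of $D_{R_f}$ (via smoothness, \'etale descent, and reduction to a Weyl algebra), pass to a localized Bernstein filtration, and deduce a relation $f^{an} \in B^{bn} \delta B^{bn}$ for nonzero $\delta \in B^n$ (the paper's Proposition~\ref{prop:fastsmallmult}, built on Lemma~\ref{reduce to R bar} and the localized-filtration machinery); then combine. Your description of the ``commutator reduction'' step is vague — the reduction gives an order-zero element, after which you must invoke linear simplicity of the \emph{localized} filtration and clear denominators to land on a power of $f$ with controlled degree — but the overall route is the paper's.

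For parts (2) and (2$'$), your approach is genuinely different from the paper's and, as stated, has a gap. The paper works in the bimodule $D_{R_f(s)|K(s)} \fs$ introduced in Section~3, shows (from linear simplicity of $D_R \otimes_K D_R^{\op}$ together with the dimension and multiplicity hypotheses $\Dim(D_R,\cBb) = 2\dim R$, $0 < \ee(D_R,\cBb) < \infty$) that this bimodule is holonomic and hence has finite length, and then uses DCC on the chain $D_{R(s)} \cdot f^t \fs$ to extract a single equation $\fs = \sum_i \alpha_i(s) f\fs \beta_i(s)$ over $K(s)$, which one clears denominators in. Your proposal instead takes pointwise linear-simplicity witnesses for each $n$ and tries to interpolate. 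There are two problems. First, linear simplicity only gives $\alpha_{i,n}, \beta_{i,n} \in B^{Cd(n+1)}$, i.e.\ degrees growing linearly in $n$; there is no reason they should lie in a fixed finite-dimensional subspace for infinitely many $n$, and you do not supply one. The torus grading on $D_R$ for an affine semigroup ring gives finite-dimensional \emph{weight} spaces, but the relevant weight constraint ($\deg A_{i,n} + \deg B_{i,n} = -\deg f$) fixes only the sum of the multidegrees, not either one, and it does not bound the Bernstein degree. Second, even if the coefficients were confined to a fixed $V$, the proposed ``polynomial-in-$n$ family of linear maps $(a,b) \mapsto a f^{n+1} b$'' is not a polynomial family in any naive sense inside $D_R$; making this precise is exactly what the $\Theta$-twisted bimodule $D_{R_f}[s]\fs$ accomplishes, and once that formalism is in place the natural argument is via holonomicity and DCC, not Vandermonde interpolation. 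Finally, you omit the dimension and multiplicity hypotheses on $D_R$ that the paper uses to make the bimodule holonomic; these are not consequences of linear simplicity alone, and in the toric case (2) the passage from linear simplicity of $D_R$ to linear simplicity of $D_R \otimes_K D_R^{\op}$ (Proposition~\ref{prop:toricdouble}) is a separate, nontrivial step using Euler operators and the monomial structure, not the torus grading in the way you describe.
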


We establish a few basic results in the study of sandwich Bernstein-Sato polynomials in their own right. In particular, we show:

\begin{theorem}[Propositions~\ref{prop-divides}~and~\ref{prop-equal} and Example~\ref{ex-f=y}]
Let $f\in R$. If $f$ has a nonzero sandwich Bernstein-Sato functional equation then $f$ admits a Bernstein-Sato polynomial, and the Bernstein-Sato polynomial of $f$ divides the sandwich Bernstein-Sato polynomial. If $R$ is smooth, these two polynomials coincide, though in general they do not.
\end{theorem}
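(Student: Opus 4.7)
The statement combines three claims: a divisibility, equality in the smooth case, and a counterexample in the singular case. I address each in turn.

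For the divisibility, starting from a sandwich equation
\begin{equation*}
\sum_i \alpha_i(s) f^{s+1} \beta_i(s) = b(s) f^s
\end{equation*}
viewed as an operator identity on $R_f[s]f^s$, the strategy is to apply both sides to the constant function $1 \in R$. On the right, this gives the module element $b(s) f^s$. On the left, $c_i(s) := \beta_i(s) \cdot 1$ is an element of $R[s]$ (not a differential operator), so $f^{s+1}$ commutes with $c_i(s)$ inside the module, and the composition $\alpha_i(s) \circ c_i(s)$ is a single differential operator $\gamma_i(s) \in D_R[s]$. Summing yields a standard Bernstein--Sato equation $\delta(s) \cdot f^{s+1} = b(s) f^s$ with $\delta(s) := \sum_i \gamma_i(s)$, so a standard Bernstein--Sato polynomial exists and divides $b(s)$.

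For the smooth case, the above divisibility supplies one direction; the reverse requires upgrading a standard equation $\delta(s) f^{s+1} = b_f(s) f^s$ into a sandwich equation with the same polynomial $b_f(s)$. The key tool is the iterated commutator identity
\begin{equation*}
\mathrm{ad}_\partial^k(g) \;=\; \sum_{j=0}^k (-1)^j \binom{k}{j} \partial^{k-j} \cdot g \cdot \partial^j,
\end{equation*}
whose right-hand side is manifestly a sandwich in $g$, and whose left-hand side equals the pure multiplication operator $\partial^k(g)$ whenever $g \in R$ and $\partial$ is a derivation. Taking $g = f^{s+1}$ produces sandwich expressions equal to multiplication by iterated derivatives $\partial^k(f^{s+1})$. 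I would proceed by induction on the order of $\delta(s)$: decompose $\delta(s)$ into products of derivations with coefficients in $R[s]$, apply the iterated-commutator identity to each monomial acting on $f^{s+1}$, and absorb residual lower-order derivative terms by further sandwich corrections, using the richness of $D_R$ in the smooth case to ensure the recursion closes exactly at the polynomial $b_f(s)$ without introducing spurious factors.

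For the counterexample, in a singular ring the derivations of $R$ are too restricted to realize this construction, and extra polynomial factors must be introduced in the sandwich equation. A direct computation with $f = y$ in a suitable singular $R$ produces a sandwich Bernstein--Sato polynomial that is a proper multiple of the standard one. The main obstacle in the overall proof is the inductive smooth-case construction: the iterated-commutator identity yields sandwich forms for specific operator expressions, but combining them to realize $b_f(s) f^s$ exactly---rather than a nontrivial multiple---requires careful combinatorial bookkeeping to cancel all residual derivative parts. The smooth setting permits this cancellation by virtue of the abundant derivations in $D_R$, and its failure in the singular setting is precisely what the counterexample exhibits.
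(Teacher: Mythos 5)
Your argument for the divisibility part is exactly the paper's: apply both sides of the sandwich equation to $1\in R$, absorb $\overline{\beta_i(s)(1)}\in D^0_R[s]$ into $\alpha_i(s)$, and read off a standard Bernstein--Sato equation with the same $b(s)$. That part is fine.

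For the smooth case your core idea --- express $\delta(s)\cdot f^{s+1}$ via iterated commutators of derivations against the multiplication operator $\overline{f}^{s+1}$, which are manifestly sandwiches --- is also the paper's idea, but your proposal adds friction that is not actually there. You suggest an induction on $\mathrm{ord}(\delta)$ and warn about "residual lower-order derivative terms" and "spurious factors" that need to be cancelled. In fact, once you write $\delta(s)$ in the normal form $\sum_j r_j\,\theta_{j,1}\cdots\theta_{j,d_j}\,s^{e_j}$ (coefficients on the left, $\theta_{j,i}$ derivations, which is possible precisely because $D_R$ is generated by $D^1_R$), the nested commutator $\bigl[\theta_{j,1},\bigl[\cdots,[\theta_{j,d_j},\overline{g}]\cdots\bigr]\bigr]$ equals $\overline{\theta_{j,1}\cdots\theta_{j,d_j}(g)}$ \emph{exactly} --- there is no residue, since $[\theta,\overline{g}]=\overline{\theta(g)}$ for a derivation $\theta$ and one simply iterates. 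So the sandwich expression $\sum_j r_j s^{e_j}\bigl[\theta_{j,1},\ldots,[\theta_{j,d_j},\overline{f}^{s+1}]\ldots\bigr]$ equals $\overline{\delta(s)\cdot f^{s+1}}=b(s)\overline{f}^s$ on the nose, with no inductive correction step. Two smaller points: (i) the paper's Proposition~\ref{prop-equal} actually holds under the weaker hypothesis that $D_{R|K}$ is generated by $D^1_{R|K}$ (the Nakai hypothesis), not only smoothness, and this costs nothing extra; (ii) working with $g=f^{s+1}$ formally needs justification, which the paper handles by evaluating at integers $t$ and invoking Lemma~\ref{lem:pin-determine}.

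Your sketch of the counterexample is a placeholder rather than a proof: "a suitable singular $R$" needs to be named and the extra root needs to be certified. The paper takes $R=\C[x^2,x^3,y,xy]$ and $f=y$, checks directly that $(1-x\partial_x)\partial_y$ gives $\BS{y}{R}(s)=s+1$, and then shows $s\mid\SBS{y}{R}(s)$ by a two-sided-ideal argument: the annihilator of $H^1_{\m}(R)\cong\C$ in $D_R$ is a two-sided ideal containing $y$ but not $1$, so evaluating any sandwich equation at $s=0$ forces $b(0)=0$. That mechanism --- producing a two-sided ideal separating $y$ from $1$ --- is the content you would need to supply.
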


Moreover, equality between sandwich Bernstein-Sato polynomials and Bernstein-Sato polynomials holds whenever the ring of differential operators satisfies the hypothesis of Nakai's conjecture; i.e., $D_R$ is generated by operators of order at most one.

We compute a number of example of sandwich Bernstein-Sato polynomials in Section~6. Finally, we note that our methods yield a result on the numerical invariant introduced in \cite{BJNB} called differential signature:

\begin{theorem}[Theorem~\ref{thm:diffsig}]
    Let $R$ be a graded ring. If there exists an element $f$ in the singular locus of $R$ that admits a Bernstein-Sato polynomial with no nonnegative integer roots, then the differential signature of $R$ is greater than zero.
\end{theorem}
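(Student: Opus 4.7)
The plan is to prove the theorem in two stages: first to show that $R$ is $D_R$-simple, and then to invoke the known link between $D$-simplicity and positivity of the differential signature from \cite{BJNB}. For the first stage, I would iterate the Bernstein-Sato equation $\delta(s)\cdot f^{s+1} = b(s) f^s$. Since $b(s)$ has no nonnegative integer roots, $b(k) \neq 0$ for every $k \in \N$; specializing at $s = k$ gives $f^k = b(k)^{-1} \delta(k) \cdot f^{k+1} \in D_R \cdot f^{k+1}$. Iterating downward from $k = n-1$ to $k = 0$, we obtain $1 = f^0 \in D_R \cdot f^n$, and hence $D_R \cdot f^n = R$, for every $n \geq 1$; the explicit witness $\theta_n \in D_R$ with $\theta_n(f^n) = 1$ has order bounded linearly in $n$.

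For the second stage, let $I \subseteq R$ be a nonzero $D_R$-stable submodule. Since $f$ lies in the singular locus, $D(f) \subseteq \Spec R$ is contained in the regular locus, so $R[f^{-1}]$ is regular and therefore $D_{R[f^{-1}]}$-simple; moreover $D_{R[f^{-1}]} \cong D_R \otimes_R R[f^{-1}]$, so $I[f^{-1}] \subseteq R[f^{-1}]$ is $D_{R[f^{-1}]}$-stable. Provided $I[f^{-1}] \neq 0$ (as holds, e.g., when $R$ is a domain), $D$-simplicity of $R[f^{-1}]$ forces $I[f^{-1}] = R[f^{-1}]$, hence $f^N \in I$ for some $N$. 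By the first stage, $1 \in D_R \cdot f^N \subseteq I$, so $I = R$. Finally, \cite{BJNB} shows that any finitely generated $\N$-graded $D_R$-simple $K$-algebra with $R_0 = K$ has positive differential signature, giving the conclusion.

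The main obstacle is justifying the nonvanishing of $I[f^{-1}]$ under the general hypothesis of a graded ring, which requires either an irreducibility hypothesis on $R$ or a careful component-wise reduction. If such a reduction cannot be made cleanly, one could instead argue directly from the concrete operators $\theta_n$ produced above: the linear bound on their order allows one to exhibit polynomially many elements of $R$ detected modulo $\mathfrak{m}$ by operators of order at most $n-1$, translating to a lower bound $\ell(R/\mathfrak{p}_n^{\mathrm{diff}}) \geq c\, n^{\dim R}$ for $n \gg 0$ via the Bernstein-filtration arguments developed elsewhere in the paper.
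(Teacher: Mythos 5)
The main step in your second stage does not exist as a theorem: $D_R$-simplicity of $R$ does \emph{not} imply that $\mathrm{s}^{\mathrm{diff}}_K(R)>0$. What \cite{BJNB} proves is the reverse implication (positive differential signature forces $D$-simplicity), plus that $D$-simplicity and positive differential signature both hold for direct summands of polynomial rings. This asymmetry is exactly the content of the theorem you are being asked to prove: differential signature is a quantitative invariant, and a qualitative statement like $D$-simplicity gives no control over the growth of $\ell(R/\m^{\langle n\rangle_K})$. So the plan ``show $D$-simplicity, then invoke \cite{BJNB}'' cannot work as stated.

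Your first stage is correct and is exactly the Bernstein--Sato part of the paper's proof: iterating the functional equation yields $1\in D^{mt}_{R|K}\cdot f^t$ with the order bound $mt$ explicit. The piece you are missing is the other quantitative ingredient. The paper produces, from the fact that $R_f$ is smooth (since $f$ is in the singular locus) and from the localized-filtration machinery (Propositions~\ref{prop:simple-filtrations} and~\ref{prop:fastsmallmult}), constants $a,b$ such that for \emph{any} nonzero $\delta\in B^n$ one has $f^{an}\in B^{bn}\delta B^{bn}$. Taking $\delta=\overline r$ for a nonzero $r$ of degree at most $n$ and evaluating at $1\in R$ yields $f^{an}\in D^{Ln}_{R|K}\cdot r$ for a constant $L$ independent of $n$; combined with your stage one this gives $1\in D^{(am+L)n}_{R|K}\cdot r$ and hence $\m^{\langle(am+L)n+1\rangle_K}\subseteq[R]_{>n}$, which is the growth estimate that forces $\mathrm{s}^{\mathrm{diff}}_K(R)>0$. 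Your fallback paragraph gestures toward using the $\theta_n$ with linearly bounded order, but those only control powers of $f$, not arbitrary low-degree elements $r$; without a quantitative replacement for Proposition~\ref{prop:fastsmallmult} (which is where the hypothesis that $f$ lies in the singular locus, i.e.~$R_f$ smooth, actually gets used) the argument does not close. Once you add that lemma, your structure matches the paper's.
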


For the reader's convenience, we end this introduction with an index of notation and terminology that will be used throughout the paper.


\begin{tabular}{lll}
$D_{R|K}$, $D$ & ring of $K$-linear differential operators & Definition~\ref{def:diffops} \\
$D^i_{R|K}$, $D^i$ &differential operators of order at most $i$  & Definition~\ref{def:diffops} \\
$\ord(\delta)$ & order of a differential operator & Definition~\ref{def:diffops} \\
$[\delta]^{i_1,\dots,i_t}_{f_1,\dots,f_t}$, $[\delta]^{\underline{i}}_{\underline{f}}$ & iterated commutator bracket & Notation~\ref{not:brac}\\
$\deg(\delta)$ & degree of a homogeneous differential operator & Definition~\ref{degree of an operator} \\
$[D]_i$, $[D]_{\geq i}$ & operators of degree $i$ or $\geq i$ (resp.) & Definition~\ref{degree of an operator} \\

$\cFb$, $\cGb$ & filtrations on a ring or module & Defs~\ref{def:filt1},~\ref{def:filt2} \\
$\cBb$ & Bernstein filtration on $D_{R|K}$ for  $R$ graded & Definition~\ref{def:Ber filt}\\
$B^i$, $F^i$, $G^i$ & filtered component of $\cBb$, $\cFb$, $\cGb$ (resp.) & Definition~\ref{def:filt2}\\
$\mathcal{F}^\bullet_{\boxtimes}$ & filtration on $A \otimes A^\mathrm{op}$ induced by $\cFb$ on $A$ & Remark~\ref{rem:box}\\
$\Dim(A,\cFb), \Dim(\cFb)$ & dimension of filtration $\cFb$ on $A$ & Definition~\ref{def:dim} \\
$\ee(A,\cFb), \ee(\cFb)$ & multiplicity of filtration $\cFb$on $A$  & Definition~\ref{def:dim} \\
$\mathrm{s}^{\mathrm{diff}}_K(R)$ & differential signature & Definition~\ref{def:diffsig} \\
$\m^{\langle n \rangle_K}$ & differential powers & Definition~\ref{def:diffsig} \\
$\SBS{f}{R}(s)$ & sandwich Bernstein-Sato poly.~of $f\in R$ & Definition~\ref{def:sbspoly}\\
$\BS{f}{R}(s)$ & Bernstein-Sato polynomial of $f\in R$ & Remark~\ref{rem:standardBS}\\
& linearly simple & Definition~\ref{def:lin simple}\\
& generator filtration & Definition~\ref{def:gen filt}\\
& localized filtration & Definition~\ref{def:loc filt}\\

\end{tabular}

\section{Background}

\subsection{Rings of differential operators}

\begin{definition}\label{def:diffops}
Let $R$ be an $A$-algebra, where $R$ and $A$ are commutative rings. 
The  {\textit{ring of differential operators}} on $R$ is a subring of $\Hom_R(R,R)$ defined inductively (by order) as follows:
    $$ D^0_{R|A} = \Hom_R(R,R), \quad \text{and}$$
    $$ D^{n+1}_{R|A} = \{\delta \in \Hom_A(R,R)\,|\,[\delta,\overline{r}]\in D^{n}_{R|A} \forall r\in D^0_{R|A} \},$$
    where  $[-,-]$ denotes the commutator. The union
    \[ D_{R|A} = \bigcup_{n=0}^{\infty} D^n_{R|A}\]
    is a ring with composition as the multiplication operation.

    For an element $\delta\in D_{R|A}$, the \textit{order} of $\delta$, denoted $\ord(\delta)$, is the smallest $n$ such that $\delta\in D^n_{R|A}$.
\end{definition}

It will often be useful to repeatedly apply the commutator, and thus we adopt the following iterative notation: 
    
\begin{notation}\label{not:brac}
    For any positive integer $n$ and operators $\delta$ and $\beta$, we denote
    \begin{itemize}
        \item $[\delta]^0_\beta \colonequals \delta$
        \item $[\delta]^1_\beta \colonequals [\delta,\beta]$
        \item $[\delta]^{n}_\beta \colonequals \big[[\delta]^{n-1}_\beta,\beta\big]$
        \item $[\delta]^{\underline{i}}_{{\underline{\beta}}} \colonequals [\cdots[[\delta]^{i_1}_{\beta_1}]^{i_2}_{\beta_2}\cdots]^{i_n}_{\beta_n}$ for any sequence of (not necessarily distinct) operators $\underline{\beta} = \beta_1,\ldots,\beta_n$ and any sequence of natural numbers $\underline{i} = i_1,\ldots,i_n.$
    \end{itemize}
\end{notation}

\begin{example}\label{Weyl Algebra}
Let $K$ be a field of characteristic zero, and consider the polynomial ring $R=K[x_1,\ldots,x_n]$. The ring of differential operators $D_{R|A}$ is the {\textit{Weyl algebra}}, which is the noncommutative algebra $K\langle x_1,\ldots,x_n, \partial_{x_1},\ldots,\partial_{x_n}\rangle$, where $\partial_{x_i} \colonequals \sfrac{\partial}{\partial x_i}$. Equivalently, this is the $K$-algebra generated by the indeterminates and the derivations of $R$. 
\end{example}

\begin{definition}\label{degree of an operator} Let $D$ be the ring of differential operators on $R$ where $R$ is a $\Z$-graded $A$-algebra. A differential operator has  {\textit{degree}} $n$ if it is a degree $n$ map in $\Hom_A(R,R)$; i.e., $\delta \in D_{R|A}$ has degree $n$ if and only if $\deg(\delta(f)) = \deg(f)+n$ for all homogeneous $f\in R$. We denote the collection of homogeneous elements of degree $i$ by $[D]_i$, and set $[D]_{\geq i} = \sum_{j\geq i} [D]_i$. We will use similar notations for elements of $R$.
\end{definition}

\begin{example}\label{ex Bad Cubic}
 The following example is due to Bernstein-Gelfand-Gelfand \cite{BGG}: Let $R= \C[x,y,z]/(x^3+y^3+z^3)$, and let $D= D_{R|\C}$. In this case $D$ exhibits some interesting properties:
\begin{itemize}
    \item $D$ contains no differential operators of negative degree.
    \item The degree-zero differential operators are generated as a $\C$-vectorspace by the set $\{1,E,E^2,\ldots\}$, where $E = x\cdot\partial_x +y\cdot\partial_y+z\cdot\partial_z$.
    \item $D$ is not Noetherian. One can show that the following forms a nonstabilizing ascending chain of two-sided ideals of $D$ for $k\geq 0$:
        $$ J_k = \sum_{n\geq 0} E^n[D^k]_1 +  [D]_{\geq 2}.$$
\end{itemize}
\end{example}

\begin{example}\label{ex Veronese}
A consequence of a theorem of Kantor \cite{Kantor} gives us the following result: Consider a polynomial ring $S$ in $n$ variables over a field $K$ of characteristic zero. Let $G$ be a group acting on $S$. If $G$ contains no pseudoreflections\footnote{We say that an element $g\in G\smallsetminus\{e_G\}$ is a pseudoreflection if $\mathrm{Fix}_K(g)$ has codimension one in ${\mSpec(S) \cong \overline{K}^n}$.}, then ${D_{S^G|K} \cong \{\delta \in D_{S|K} \ | \ \delta(S^G) \subseteq S^G\}}$.

This result can be used to give a concrete description of the differential operators on the Veronese subrings of a polynomial ring. Namely, let $S = K[x_1,\ldots,x_n]$, with $K$ a field of characteristic zero, and let $T_d$ be the set of all degree $d$ monomials in $S$. We call $K[T_d]$ the \textit{degree $d$ Veronese subring} of $S$, and denote this ring by $S^{(d)}$; this ring can also be described as the subring of $S$ spanned by the homogeneous elements whose degree is a multiple of $d$. As long as $K$ has a primitive $d$th root of unity and $n$ is at least two, $S^{(d)}$ can be realized as the invariant ring of a group action with no pseudoreflections. Thus the differential operators on $S^{(d)}$ are precisely the sums of homogeneous differential operators on $S$ that map any homogeneous element of degree a multiple of $d$ in $S$ to another homogeneous element of degree a multiple of $d$ in $S$.

Concretely, as a $K$-vector space, we have
    \begin{equation}\label{eq:veronese} D_{S^{(d)}|K} = K\cdot\{x_1^{a_1}\cdots x_n^{a_n}\partial_{x_1}^{b_1}\cdots \partial_{x_n}^{b_n}\ |\ \sum_{i=1}^n (a_i-b_i)  = kd \text{ for some } k \in \Z\}.\end{equation}

There exists a field extension $L$ of $K$ that contains a primitive $d$th root of unity. For any $K$, the right-hand side in \eqref{eq:veronese} is verified to be a subset of $D_{S^{(d)}|K}$; then since there is an isomorphism \[\begin{aligned} D_{S^{(d)}|K} &\otimes_K L \cong D_{S^{(d)}\otimes_K L|L} \cong D_{(S\otimes_K L)^{(d)}|L} \\&\cong L \cdot\{x_1^{a_1}\cdots x_n^{a_n}\partial_{x_1}^{b_1}\cdots \partial_{x_n}^{b_n}\ | \ \sum_{i=1}^n (a_i-b_i)  = kd \text{ for some } k \in \Z\},\end{aligned}\]
it follows that \eqref{eq:veronese} holds for $K$ as well. In particular, this holds for any $K$ of characteristic zero.
\end{example}

\begin{example}
 For a field $K$ of characteristic zero, and positive integers $a,b>1$, we can consider the Cartesian product of projective spaces $\P^{a-1} \times \P^{b-1}$ as a projective variety via the Segre embedding into $\P^{ab-1}$. The coordinate ring of this variety is isomorphic to the subring 
\[ R:=K[x_iy_j\, |\, 1\leq i \leq a, 1\leq j\leq b] \subseteq S:= K[x_1,\ldots, x_a, y_1,\ldots, y_b].\] 

Following the work of Musson in \cite{Musson}, we can also find the differential operators of the coordinate ring $R$ of this Segre embedding. Suppose that $K$ has characteristic zero. Then the ring of differential operators on $R$ can be described as
    \[D_{R|K} \cong \frac{\{\delta\in D_{S|K} \ | \ \delta(R) \subseteq R \}}{(\sum_{i=1}^a x_i\partial{x_i} - \sum_{i=1}^b y_i\partial{y_i})} = \frac{K\langle \{ x_i y_j, x_i  \partial_{x_k}, y_j  \partial_{y_\ell}, \partial_{x_k}\partial_{y_\ell} \ | \ 1 \leq i,k\leq a, 1 \leq j,\ell \leq b\}\rangle}{(\sum_{i=1}^a x_i\partial{x_i} - \sum_{i=1}^b y_i\partial{y_i})}.\]
\end{example}

We will need a few basic lemmas on rings of differential operators towards our results in the later sections.

\begin{lemma}\label{lem:bracket-order} Let $R$ be an $A$-algebra generated by a set $S\subseteq R$. 
    \begin{enumerate}
    \item An $A$-linear endomorphism $\delta$ of $R$ is a differential operator of order at most $n$ if and only if $[\delta,s]\in D^{n-1}_{R|A}$ for all $s\in S$.
    \item If $\delta\in D_{R|A}$ has order exactly $n$, then there exists a sequence $s_1,\dots,s_{n}\in S$ such that $[\delta]^{1,\dots,1}_{s_1,\dots,s_n} \in D_{R|A}^0\smallsetminus 0$. 
    \end{enumerate}
    \end{lemma}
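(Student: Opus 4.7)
The plan is to prove both parts by induction on $n$, using the standard filtration property that $D^i_{R|A}\circ D^j_{R|A}\subseteq D^{i+j}_{R|A}$. I will either invoke this classical fact directly or establish it by a brief parallel induction on $i+j$ using the Leibniz-type identity $[\alpha\beta,r] = \alpha[\beta,r] + [\alpha,r]\beta$; I would prefer to cite it since it is more basic than the lemma at hand.

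For part (1), the forward implication is immediate from the definition of $D^n_{R|A}$ by restricting the quantifier to $r\in S\subseteq R$. For the converse, the plan is to set
\[ T \colonequals \{r\in R \mid [\delta,r]\in D^{n-1}_{R|A}\} \]
(with the convention $D^{-1}_{R|A} = 0$) and to show that $T$ is an $A$-subalgebra of $R$, at which point $T\supseteq S$ forces $T=R$ since $S$ generates $R$ as an $A$-algebra. Inclusion $A\subseteq T$ follows from $A$-linearity of $\delta$, closure under addition is obvious, and closure under multiplication follows from the identity
\[ [\delta,rr'] = [\delta,r]\,r' + r\,[\delta,r'] \]
combined with $D^{n-1}_{R|A}\cdot D^0_{R|A}\subseteq D^{n-1}_{R|A}$.

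For part (2), the plan is induction on $n=\ord(\delta)$. When $n=0$ the empty iterated bracket equals $\delta\in D^0_{R|A}\smallsetminus 0$ and we are done. For the inductive step, since $\delta\notin D^{n-1}_{R|A}$, the contrapositive of part (1) produces some $s_1\in S$ with $[\delta,s_1]\notin D^{n-2}_{R|A}$; on the other hand $\delta\in D^n_{R|A}$ gives $[\delta,s_1]\in D^{n-1}_{R|A}$. Hence $\ord([\delta,s_1])=n-1$, and applying the inductive hypothesis to $[\delta,s_1]$ yields a sequence $s_2,\dots,s_n\in S$ such that
\[ [[\delta,s_1]]^{1,\dots,1}_{s_2,\dots,s_n} = [\delta]^{1,\dots,1}_{s_1,s_2,\dots,s_n} \in D^0_{R|A}\smallsetminus 0, \]
completing the induction.

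The main potential obstacle is merely the bookkeeping around the base case $n=0$ of part (1) and the convention $D^{-1}_{R|A}=0$; once that and the filtration property $D^i\circ D^j\subseteq D^{i+j}$ are in hand, both statements reduce to routine verifications of the Leibniz rule in the appropriate order-graded setting.
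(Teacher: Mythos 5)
Your proof is correct and essentially matches the paper's argument. Part~(2) is nearly word-for-word the same induction. For part~(1) the paper inducts explicitly on the total degree of a monomial $s_1^{a_1}\cdots s_t^{a_t}$ in the generators, whereas you phrase the same content as the observation that $T=\{r\mid [\delta,\overline{r}]\in D^{n-1}_{R|A}\}$ is an $A$-subalgebra containing $S$; both hinge on the Leibniz identity $[\delta,\overline{rr'}]=[\delta,\overline{r}]\,\overline{r'}+\overline{r}\,[\delta,\overline{r'}]$ and on the $D^0$-bimodule structure of $D^{n-1}_{R|A}$. Your subalgebra formulation is arguably a touch cleaner. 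One small point of bookkeeping: closure under multiplication actually uses both $D^{n-1}_{R|A}\cdot D^0_{R|A}\subseteq D^{n-1}_{R|A}$ and $D^0_{R|A}\cdot D^{n-1}_{R|A}\subseteq D^{n-1}_{R|A}$ (one for each term of the Leibniz identity), not just the one you cite; these two facts are exactly what you need, and you do not need the full filtration inequality $D^i\circ D^j\subseteq D^{i+j}$ you contemplate invoking.
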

    \begin{proof} 
    \begin{enumerate}
    \item Since every element of $R$ is an $A$-linear combination of elements of the form $r=s_1^{a_1}\cdots s_t^{a_t}$ with $s_i\in S$, it suffices to show that under the hypothesis, $[\delta,r]\in D^{n-1}_{R|A}$.  We proceed by induction on $a_1+\cdots +a_t$. If $a_1+\cdots +a_t=0$, we have $r=1$ and $[\delta,r]=0$; otherwise, without loss of generality, we can take $a_1>0$ and write $r=r' s_1$. Then
    \[ [\delta,r] = [\delta,r' s_1] = r' [\delta,s_1] + [\delta,r'] s_1 \in D^{n-1}_{R|A}\]
    by the induction hypothesis.
    \item     We argue by induction on $n$, with the case $n=0$ using the empty sequence. Let ${\delta\in D^{n}_{R|A} \smallsetminus D^{n-1}_{R|A}}$ with $n>0$. By part (1), there is some $s_1\in S$ such that  ${[\delta,s_1]\notin D^{n-2}_{R|A}}$, and we also have $[\delta,s_1]\in D^{n-1}_{R|A}$ by definition. The claim then follows from the induction hypothesis.\qedhere
    \end{enumerate}
    \end{proof}

    We will require a few facts about the behavior of differential operators over well-behaved morphisms. Rather than giving the most general results, we will focus on the case of finitely generated algebras.
    
    \begin{lemma}\label{Localizing Things}
    Let $R$ be a finitely generated algebra over a field $K$. 
    \begin{enumerate}
        \item If $W\subseteq R$ is multiplicatively closed, there is an isomorphism  \[ W^{-1} D^{n}_{R|K} \stackrel{\sim}{\to} D^{n}_{W^{-1}R|K}.\]
        \item If $S$ is a formally \'etale algebra $R$-algebra,
        there is an isomorphism  \[ S\otimes_K D^{n}_{R|K} \stackrel{\sim}{\to} D^{n}_{S|K}.\]
        \item If $L$ is a field extension of $K$, there is an isomorphism
        \[ L\otimes_K D^n_{R|K} \stackrel{\sim}{\to} D^n_{L\otimes_K R| L}.\]
    \end{enumerate}
\end{lemma}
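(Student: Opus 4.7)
The plan is to prove each part by induction on the order $n$, reducing the inductive step via Lemma~\ref{lem:bracket-order} to a statement about first-order commutators with elements of a generating set. The base case $n=0$ is immediate in each part since $D^0_{(-)|K}$ equals the underlying ring itself, and the tensor products on the left collapse accordingly to the correct rings.

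For part (1), I would construct the map $W^{-1}D^n_{R|K} \to D^n_{W^{-1}R|K}$ by first extending each $\delta \in D^n_{R|K}$ to a unique operator $\tilde\delta \in D^n_{W^{-1}R|K}$: the extension is forced by the quotient rule, since the identity $\delta(r) = v \cdot \tilde\delta(r/v) + [\tilde\delta,v](r/v)$ yields $\tilde\delta(r/v) = \delta(r)/v - [\delta,v]^\sim(r/v)/v$, where $[\delta,v]^\sim$ is the unique extension of the order $n-1$ operator $[\delta,v]$ provided by induction. The assignment $\delta/w \mapsto (1/w)\cdot\tilde\delta$ gives the candidate map; injectivity is routine since a vanishing operator on $W^{-1}R$ restricts to zero on $R$. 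Surjectivity is the subtle point: given $\delta' \in D^n_{W^{-1}R|K}$, one uses finite generation together with induction on $n$ to find a single $w\in W$ such that $w\delta'$ maps $R$ into $R$; this requires bounding denominators of $\delta'$ on a finite generating set and then applying Lemma~\ref{lem:bracket-order}(1) to verify the restricted operator lies in $D^n_{R|K}$.

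Parts (2) and (3) follow the same inductive template with a unique-lift property replacing the quotient rule; I interpret the $\otimes_K$ in part (2) as $\otimes_R$, consistent with the $R$-module structure on $D^n_{R|K}$. For part (2), formally \'etale $R \to S$ yields unique lifts of derivations, and hence inductively unique lifts of order-$n$ operators; the isomorphism is $s\otimes \delta \mapsto s\cdot\tilde\delta$. For part (3), the key identities are $D^0_{L\otimes_K R|L} = L\otimes_K R$ and $[\ell\otimes \delta,\, 1\otimes r] = \ell\otimes [\delta,r] \in L\otimes_K D^{n-1}_{R|K}$, combined with the observation that $1\otimes R$ generates $L\otimes_K R$ as an $L$-algebra, so Lemma~\ref{lem:bracket-order}(1) reduces the order check to the inductive hypothesis and $L$-linearity then pins down surjectivity.

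The main obstacle across the three parts is the surjectivity in (1): showing that denominators can be cleared uniformly on a whole operator (not merely on its values at a generating set) is exactly where finite generation of $R$ over $K$ is essential, and the bracket criterion of Lemma~\ref{lem:bracket-order}(1) is the tool that turns a denominator bound on generators into a denominator bound on the operator itself.
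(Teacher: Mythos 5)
Your approach is genuinely different from the paper's. For parts (1) and (2) the paper simply cites references (\cite[Proposition~2.17]{BJNB} for localization, \cite[Theorem~2.2.10]{Masson} for the formally \'etale case); your inductive argument via the bracket criterion of Lemma~\ref{lem:bracket-order} is a reasonable sketch of what those references prove, and your observation that the $\otimes_K$ in part (2) must be read as $\otimes_R$ is correct — as stated, the formula would already fail for $S=R$. For part (3) the contrast is more substantive: the paper works with the module of principal parts $P^n_{R|K}$, shows $P^n_{R_L|L}\cong L\otimes_K P^n_{R|K}$ via the diagonal ideal, and then invokes finite presentation of $P^n_{R|K}$ together with flat base change for $\Hom$ to get the isomorphism in one stroke. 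Your proposal instead runs an induction on $n$ with a direct commutator argument. Both work; the paper's argument is shorter once you accept the principal-parts formalism and the finite-presentation input, whereas yours is more elementary and uniform across all three parts.

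One point in your part (3) is glossed over and does require care: when you say ``$L$-linearity then pins down surjectivity,'' the real content is that for a given $\delta'\in D^n_{L\otimes_K R|L}$, the restriction $\delta'|_{1\otimes R}$ lands in $V\otimes_K R$ for a \emph{single} finite-dimensional $K$-subspace $V\subseteq L$, so that one can decompose $\delta'|_{1\otimes R}=\sum_\lambda \ell_\lambda\otimes\delta_\lambda$ as a genuinely finite sum. This is not automatic when $L/K$ is infinite. It does follow from your induction — the inductive hypothesis applied to the finitely many brackets $[\delta',1\otimes x_i]$ bounds the $L$-span of $\delta'(1\otimes x^\alpha)$ as $\alpha$ varies, and then one checks each $\delta_\lambda\in D^n_{R|K}$ via Lemma~\ref{lem:bracket-order}(1) as you indicate — but the uniform finiteness deserves an explicit sentence. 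This is precisely the finiteness that the paper's argument packages into ``$P^n_{R|K}$ is finitely presented.''
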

\begin{proof}
    For Part (1), see \cite[Proposition~2.17]{BJNB}. For Part (2), see \cite[Theorem~2.2.10]{Masson}. For the third, write $R_L$ for $L\otimes_K R$ for ease of notation. Note that $R_L \otimes_L R_L \cong L\otimes_K (R\otimes_K R)$, and the maps $R\to R_L$ and $R\otimes_K R \to R_L \otimes_L R_L$ are flat.
    From the short exact sequence
    \[ 0 \to \Delta_{R|K} \to R\otimes_K R \xrightarrow{\text{mult}} R \to 0, \]
    where $\Delta_{R|K}$ denotes the kernel of the multiplication map, we obtain another exact sequence
    \[ 0 \to L\otimes_K \Delta_{R|K} \to R_L \otimes_L R_L \xrightarrow{\text{mult}} R_L \to 0  \] 
    so $\Delta_{R_L|L} \cong L\otimes_K \Delta_{R|K}$; by flatness, we have $\Delta_{R_L|L} = \Delta_{R|K}(R_L \otimes_L R_L)$. Then for each $n$, 
    \[ P^n_{R_L|L} \cong \frac{R_L \otimes_L R_L}{\Delta_{R_L|L}^{n+1}} \cong \frac{R_L \otimes_L R_L}{\Delta_{R|K}^n(R_L \otimes_L R_L)} \cong L\otimes_K P^n_{R|K}, \]
    where $P^n_{R|K}$ denotes the module of principal parts. Under the hypotheses, $P^n_{R|K}$ is finitely presented, so by Hom and flat base change,
    \[ D^n_{R_L|L} \cong \Hom_{R_L}(P^n_{R_L|L},R_L) \cong R_L \otimes_R \Hom_{R}(P^n_{R|K},R_K) \cong L\otimes_K \Hom_R(P^n_{R|K},R),\]
    as desired.
\end{proof}

    While our main results will be in characteristic zero, we will also utilize differential operators for rings of positive characteristic. We state the results we use below in a form tailored to our situation rather than in their most general forms.

    \begin{proposition}\label{prop:charpdiff} Let $T$ be a finitely generated $\Z$-algebra.
    \begin{enumerate}
        \item Let $T$ be a $\Z$-algebra. Then reduction modulo $p$ induces a ring homomorphism
        \[ \F_p \otimes_{\Z} D_{T|\Z} \to D_{T/pT| \F_p} \]
        compatible with the order filtration that restricts to an isomorphism on the order zero piece.
        \item There is a containment $D^{p-1}_{T/pT| \F_p} \subseteq \End_{(T/pT)^p}(T/pT)$ as subsets of $\End_{\F_p}(T/pT)$.
    \end{enumerate}
    \end{proposition}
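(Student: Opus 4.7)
My plan is to handle the two parts largely independently, with the main substantive step being the commutator identity needed for Part~(2).

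For Part~(1), I would start by observing that any $\delta\in D^n_{T|\Z}$ is $\Z$-linear, so $\delta(pT)\subseteq pT$, and therefore $\delta$ descends to an $\F_p$-linear endomorphism $\bar\delta$ of $T/pT$. This yields a $\Z$-linear map $D_{T|\Z}\to \End_{\F_p}(T/pT)$ that kills $pD_{T|\Z}$, hence factors through $\F_p\otimes_\Z D_{T|\Z}$. The central claim is that the image sits inside $D_{T/pT|\F_p}$ and respects the order filtration. I would prove this by induction on $n$ using Lemma~\ref{lem:bracket-order}(1): fix a finite set of algebra generators $S\subseteq T$ over $\Z$; then $\bar\delta\in D^n_{T/pT|\F_p}$ as soon as each $[\bar\delta,\bar s]\in D^{n-1}_{T/pT|\F_p}$, and each of these is the reduction mod $p$ of $[\delta,s]\in D^{n-1}_{T|\Z}$, which lies in $D^{n-1}_{T/pT|\F_p}$ by the inductive hypothesis. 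The base case $n=0$ identifies both $D^0_{T|\Z}$ and $D^0_{T/pT|\F_p}$ with multiplication by elements of $T$ and $T/pT$ respectively, giving the canonical isomorphism $\F_p\otimes_\Z T \xrightarrow{\sim} T/pT$. Finally, the resulting map is a ring homomorphism because composition of endomorphisms is preserved under reduction mod~$p$.

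For Part~(2), the heart of the matter is the identity
\[ [\delta,g^p] \;=\; [\delta]^p_g \]
in $\End_{\F_p}(T/pT)$, valid for every $g\in T/pT$ and every $\F_p$-linear $\delta$. To prove it, introduce left- and right-multiplication operators $L_g,R_g$ on $\End_{\F_p}(T/pT)$ and write $\mathrm{ad}_g = R_g - L_g$. Since $L_g$ and $R_g$ commute, the binomial theorem gives
\[ \mathrm{ad}_g^p \;=\; \sum_{k=0}^{p}\binom{p}{k}(-1)^{k}R_g^{\,p-k}L_g^{\,k}; \]
in characteristic $p$ every cross term vanishes since $\binom{p}{k}\equiv 0\pmod p$ for $0<k<p$, and (whether $p$ is odd, making $(-1)^p=-1$, or $p=2$, where $-1=1$) one obtains $\mathrm{ad}_g^p = R_g^p - L_g^p = R_{g^p} - L_{g^p} = \mathrm{ad}_{g^p}$.

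Applied to any $\delta\in D^{p-1}_{T/pT|\F_p}$, the iterated commutator $[\delta]^p_g$ lowers the order $p$ times, hence lies in $D^{-1}_{T/pT|\F_p}=0$; therefore $[\delta,g^p]=0$, i.e.\ $\delta$ commutes with multiplication by every $p$-th power of $T/pT$. This is exactly the assertion that $\delta$ is $(T/pT)^p$-linear, yielding the desired containment $D^{p-1}_{T/pT|\F_p}\subseteq \End_{(T/pT)^p}(T/pT)$. The main obstacle is nothing more than spotting and executing the $(R_g-L_g)^p$ trick cleanly; once the identity $[\delta,g^p]=[\delta]^p_g$ is in hand, both parts reduce to routine book-keeping with the definition of differential operator and the order filtration.
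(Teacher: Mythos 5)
Your proof is correct. Note that the paper itself does not really prove this proposition: it says Part~(1) ``follows from the definitions'' and cites \cite[Proposition~5.6]{BJNB} for Part~(2). You have instead given a self-contained argument.

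For Part~(1), your induction using Lemma~\ref{lem:bracket-order}(1) together with the observation that $[\bar\delta,\bar s]=\overline{[\delta,s]}$ is exactly what ``follows from the definitions'' unwinds to; the ring-homomorphism property is as you say immediate at the level of endomorphism rings, so there is no circularity in invoking commutators before you know the image lies in $D_{T/pT\mid\F_p}$. For Part~(2), the identity $[\delta]^p_g=[\delta,g^p]$ via $(R_g-L_g)^p=R_{g^p}-L_{g^p}$, using that $L_g$ and $R_g$ commute and that the middle binomial coefficients vanish mod $p$, is the standard route and presumably the same one in the cited reference. One small point worth spelling out: you send $[\delta]^p_g$ into ``$D^{-1}=0$,'' which is not formally defined in the paper's recursion; it is cleaner to note that $[\delta]^{p-1}_g\in D^0$, i.e.\ is multiplication by an element of $T/pT$, and hence commutes with multiplication by $g$, so $[\delta]^p_g=0$ directly without invoking a negative filtration piece. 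With that cosmetic adjustment the argument is airtight, and it has the added benefit of being genuinely self-contained where the paper defers to an external source.
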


\begin{proof}
    The first statement follows from the definitions. For the second, see \cite[Proposition~5.6]{BJNB}.
\end{proof}

We will utilize use the following lemma in the next result.

\begin{lemma}[{\cite[Lemma~4.4]{AHJNTW2}}]\label{lem44}
Let $R$ be a $K$-algebra and $f\in R$. For every $t\in \Z$, in $D_{R_f|K}$ we have
\[ \delta f^t = \sum_{i=0}^{\ord(\delta)} \binom{t}{i} f^{t-i} [\delta]_f^i \qquad \text{and} \qquad f^t \delta  = \sum_{i=0}^{\ord(\delta)} (-1)^i \binom{t}{i} [\delta]_f^i f^{t-i}. \]
\end{lemma}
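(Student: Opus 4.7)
The plan is to reduce both identities to a single recurrence, verify the recurrence by a direct commutator computation combined with Pascal's identity, and then propagate in both directions using invertibility of $f$ in $D_{R_f|K}$. Concretely, set
\[ \Delta_t \colonequals \delta f^t - \sum_{i=0}^{n} \binom{t}{i} f^{t-i} [\delta]_f^i \quad\text{and}\quad \Delta'_t \colonequals f^t \delta - \sum_{i=0}^{n} (-1)^i \binom{t}{i} [\delta]_f^i f^{t-i}, \]
where $n = \ord(\delta)$, both regarded as elements of $D_{R_f|K}$. The goal is to show $\Delta_t = 0 = \Delta'_t$ for all $t \in \Z$.

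The first step is the base case: observe that $\Delta_0 = \delta - \delta = 0$ and similarly $\Delta'_0 = 0$, since only the $i=0$ term in each sum survives when $t=0$. The key identity is the recurrence $\Delta_{t+1} = \Delta_t \cdot f$ (and its mirror $\Delta'_{t+1} = f\cdot \Delta'_t$). To check this, I would use the fundamental commutation rule $[\delta]_f^i \, f = f \, [\delta]_f^i + [\delta]_f^{i+1}$ (which is just the definition of the iterated bracket), expand $\Delta_t \cdot f$, reindex the resulting sum arising from $[\delta]_f^{i+1}$, and combine terms via $\binom{t}{i} + \binom{t}{i-1} = \binom{t+1}{i}$; the truncation at $i \leq n$ is preserved because $[\delta]_f^{n+1} = 0$ as $\ord([\delta,f]^{\circ k}) \leq n-k$. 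An entirely analogous calculation, using $f\,[\delta]_f^i = [\delta]_f^i f - [\delta]_f^{i+1}$ and the alternating signs, yields $f \cdot \Delta'_t = \Delta'_{t+1}$.

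With the recurrence established, the conclusion is immediate in two directions. Running $t \geq 0$ upwards gives $\Delta_{t+1} = \Delta_t \cdot f = 0$ by induction, recovering the positive-$t$ case. For negative $t$, I use that $f$ is a unit in $D_{R_f|K}$: rewriting the recurrence as $\Delta_t = \Delta_{t+1}\cdot f^{-1}$ and starting from $\Delta_0 = 0$, induction downward gives $\Delta_t = 0$ for all $t < 0$. The same argument, multiplying on the left by $f^{-1}$, handles $\Delta'_t$.

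The only genuine subtlety — and what I expect to be the most delicate part to verify cleanly — is the bookkeeping in the recurrence: making sure the reindexing of the second sum and the combinatorial identity $\binom{t}{i} + \binom{t}{i-1} = \binom{t+1}{i}$ line up correctly, and tracking signs in the $\Delta'_t$ computation so that the telescoping produces $-\binom{t}{i-1}$ inside the bracket with $\binom{t}{i}$. Everything else — the base case, the use of invertibility in $D_{R_f|K}$, and the downward induction — is formal, so once the recurrence is verified the two identities fall out simultaneously and for all $t \in \Z$ at once, with no separate case analysis for $t \geq 0$ versus $t < 0$.
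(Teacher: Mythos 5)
Your proof is correct, and it is a complete argument. Note that the paper itself does not prove this statement: Lemma~\ref{lem44} is cited directly from \cite[Lemma~4.4]{AHJNTW2}, so there is no internal proof to compare against; you have supplied one from scratch. Your argument is sound throughout: the base case $\Delta_0 = \Delta'_0 = 0$ is immediate; the recurrence $\Delta_{t+1} = \Delta_t\cdot f$ follows exactly as you describe from $[\delta]_f^i f = f[\delta]_f^i + [\delta]_f^{i+1}$, Pascal's identity, and the vanishing $[\delta]_f^{n+1}=0$ (which holds since $\ord([\delta]_f^i)\leq n-i$ by the inductive definition of order, so $[\delta]_f^{n+1}\in D^{-1}_{R_f|K}=0$); and invertibility of $f$ in $D_{R_f|K}$ lets you propagate in both directions. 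The mirror computation for $\Delta'_t$, using $f[\delta]_f^i = [\delta]_f^i f - [\delta]_f^{i+1}$ and left multiplication by $f^{-1}$ for the downward step, goes through with the signs lining up as you anticipated: the $i=0$ term contributes $\binom{t+1}{0}[\delta]_f^0 f^{t+1}$ and for $i\geq 1$ one gets $(-1)^i\left(\binom{t}{i}+\binom{t}{i-1}\right) = (-1)^i\binom{t+1}{i}$ as the coefficient of $[\delta]_f^i f^{t+1-i}$. One stylistic remark: rather than defining the deficit $\Delta_t$ and proving it vanishes, one could equivalently phrase the same computation as showing that the right-hand sides themselves satisfy the recursion $S_{t+1} = S_t\cdot f$ with $S_0=\delta$; this is the same content, but avoids carrying the subtraction through the calculation. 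Either way the proof handles all $t\in\Z$ uniformly from the single anchor $t=0$, which is clean.
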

    
    \begin{lemma}\label{lemma:rearragnement}
    Let $R$ be a finitely generated algebra over a field $K$, and $f\in R$. Fix differential operators $\delta_1,\dots,\delta_t\in D_{R|K}$ and integers $a_1,\dots,a_t$. Then in $D_{R_f|K}$ we have equalities:
    \[ \delta_1 f^{a_1} \delta_2 f^{a_2} \cdots \delta_t f^{a_t} = \sum_{i_t=0}^{\ord{\delta_t}} \cdots \sum_{i_1=0}^{\ord{\delta_1}}  A_t \cdots A_1 f^{a_1+\cdots+ a_t - i_1 - \cdots- i_t} [\delta_1]_f^{i_1} \cdots [\delta_t]_f^{i_t}, \]
    where \[A_j = \binom{\sum_{k=j}^t a_k - \sum_{k={j+1}}^t i_k}{i_j};\]
    and
     \[  f^{a_1} \delta_1 f^{a_2}  \delta_2 \cdots f^{a_t} \delta_t  = \sum_{i_t=0}^{\ord{\delta_t}} \cdots \sum_{i_1=0}^{\ord{\delta_1}} (-1)^{i_1+\cdots +i_t} A_t \cdots A_1 [\delta_1]_f^{i_1} \cdots [\delta_t]_f^{i_t} f^{a_1+\cdots+ a_t - i_1 - \cdots- i_t}. \]
    \end{lemma}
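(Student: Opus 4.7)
The plan is to prove both identities by induction on $t$, with Lemma~\ref{lem44} serving as both the base case $t=1$ and the key algebraic move at each inductive step. The crucial observation enabling the induction is that $f$ commutes with itself, so once all the differential operators in a subexpression have been collected on one side, the remaining powers of $f$ amalgamate freely into a single power with no binomial correction.

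For the first identity, I would split off the leftmost factor:
\[ \delta_1 f^{a_1}\delta_2 f^{a_2}\cdots\delta_t f^{a_t} \;=\; \delta_1 f^{a_1}\cdot\bigl(\delta_2 f^{a_2}\cdots\delta_t f^{a_t}\bigr). \]
The inductive hypothesis, applied to the parenthesized product of length $t-1$, expands it as a sum of terms of the shape $(\text{coefficient})\cdot f^{a_2+\cdots+a_t-i_2-\cdots-i_t}[\delta_2]_f^{i_2}\cdots[\delta_t]_f^{i_t}$, with the coefficients already equal to $A_t\cdots A_2$ (note that for $j\geq 2$ the definition of $A_j$ involves only $a_j,\ldots,a_t$ and $i_{j+1},\ldots,i_t$, so these are unchanged by prepending $\delta_1 f^{a_1}$). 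Left-multiplying by $\delta_1 f^{a_1}$, amalgamating the adjacent powers of $f$, and then applying the first half of Lemma~\ref{lem44} to push $\delta_1$ through $f^s$ where $s=a_1+a_2+\cdots+a_t-i_2-\cdots-i_t$ introduces the binomial coefficient $\binom{s}{i_1}$, which by inspection equals $A_1$.

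For the second identity I would use the mirror strategy: apply the second half of Lemma~\ref{lem44} to the leftmost factor, writing $f^{a_1}\delta_1=\sum_{i_1}(-1)^{i_1}\binom{a_1}{i_1}[\delta_1]_f^{i_1} f^{a_1-i_1}$, amalgamate $f^{a_1-i_1}$ with the next $f^{a_2}$, and apply the inductive hypothesis to the resulting shortened product $f^{a_1+a_2-i_1}\delta_2 f^{a_3}\delta_3\cdots f^{a_t}\delta_t$ of length $t-1$. Expanding back and checking that the shifted first exponent $a_1+a_2-i_1$ absorbs correctly into the running sums appearing in the remaining $A_j$'s (or the symmetric reflection thereof) recovers the target formula, with the overall sign $(-1)^{i_1+\cdots+i_t}$ produced by accumulating one sign per application of the second half of Lemma~\ref{lem44}.

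The main obstacle is simply the bookkeeping of the binomial coefficients: one must carefully track how the running sums $\sum_{k\geq j}a_k-\sum_{k>j}i_k$ transform through each inductive step and verify that they match the claimed formula. The essential point is that no differential operators ever need to be commuted past one another---only past powers of $f$, which is exactly the content of Lemma~\ref{lem44}---so there are no noncommutativity issues beyond those already packaged in that lemma, and the argument reduces to a clean combinatorial verification.
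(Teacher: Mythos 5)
Your argument for the first identity is correct and takes a mildly different inductive decomposition from the paper's: the paper applies Lemma~\ref{lem44} to the rightmost factor $\delta_t f^{a_t}$ first, then invokes the induction hypothesis on the shortened product $\delta_1 f^{a_1}\cdots\delta_{t-1}f^{a_{t-1}+a_t-i_t}$ with a modified last exponent, whereas you apply the induction hypothesis to the untouched tail $\delta_2 f^{a_2}\cdots\delta_t f^{a_t}$ and only then push $\delta_1$ through the amalgamated power of $f$. Both routes work; your observation that the $A_j$ for $j\geq 2$ are unaffected by prepending, together with the newly produced $\binom{s}{i_1}$ equaling $A_1$, is exactly what closes your version.

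For the second identity, your parenthetical hedge ``(or the symmetric reflection thereof)'' is the crux, and you should commit to it rather than claim the printed formula is recovered. If you carry your induction to completion (peel off $f^{a_1}\delta_1$, then induct on the length-$(t-1)$ product $f^{a_1+a_2-i_1}\delta_2\cdots f^{a_t}\delta_t$), the coefficient attached to $[\delta_j]_f^{i_j}$ that emerges is $\binom{\sum_{k\leq j}a_k - \sum_{k<j}i_k}{i_j}$, the mirror of the $A_j=\binom{\sum_{k\geq j}a_k-\sum_{k>j}i_k}{i_j}$ printed in the lemma, and the two disagree in general. Concretely, take $R=K[x]$, $f=x$, $t=2$, $\delta_1=\delta_2=\partial_x$, $a_1=2$, $a_2=1$: the left side $x^2\partial_x x\partial_x$ equals $x^3\partial_x^2+x^2\partial_x$, while the right side with the printed $A_j$ evaluates to $\partial_x^2 x^3 - 4\partial_x x^2+2x = x^3\partial_x^2+2x^2\partial_x$; the mirrored coefficients give the correct $x^3\partial_x^2+x^2\partial_x$. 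So your induction is sound as a method---it is the lemma's stated coefficients for the second identity (whose proof in the paper only says ``we proceed similarly'' without writing out the constants) that need the symmetric reflection you were already gesturing at.
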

    \begin{proof}
     We apply Lemma~\ref{lem44} to prove the first equality by induction on~$t$, with $t=1$ an immediate consequence of the aforementioned equality.
    Applying Lemma~\ref{lem44} to the last pair of products, we obtain
    \[ \delta_1 f^{a_1} \delta_2 f^{a_2} \cdots \delta_t f^{a_t} = \sum_{i_t=0}^{\ord{\delta_t}} \binom{a_t}{i_t} \delta_1 f^{a_1} \delta_2 f^{a_2} \cdots \delta_{t-1} f^{a_{t-1}+a_t-i_t} [\delta_t]_f^{i_t}.\]
    By the induction hypothesis, we can rewrite
    \[ \delta_1 f^{a_1} \delta_2 f^{a_2} \cdots \delta_{t-1} f^{a_{t-1}+a_t-i_t} = \sum_{i_{t-1}=0}^{\ord{\delta_{t-1}}}\cdots \sum_{i_{1}=0}^{\ord{\delta_{1}}} A_{t-1} \cdots A_1 f^{a_1+\cdots+ a_t - i_1 - \cdots- i_t} [\delta_{1}]_f^{i_{1}} \cdots  [\delta_{t-1}]_f^{i_{t-1}},\]
    where the constants $A_{t-1},\dots,A_1$ agree with those in the statement. Substituting in, the desired equality is seen to hold. 
    
    For the second equality, we proceed similarly.
    \end{proof}

\subsection{Filtered rings and modules}

Our notion of dimension for modules over rings of differential operators will be based on filtrations.

\begin{definition}\label{def:filt1}
    Let $A$ be a ring (not necessarily commutative) and let $\cFb = F^1, F^2,\ldots$ be a sequence of $A$-modules. We say that $(A,\cFb)$ or $R$ is a  {\textit{filtered ring}} or that $\cFb$ is a  {\textit{filtration on $A$}} if 
    \begin{itemize}
        \item $\cFb$ is ascending: $F^1\subseteq F^2 \subseteq \cdots$,
        \item $\cFb$ is exhaustive: $\bigcup_i F^i = A$,
        \item $\cFb$ has the multiplicative property: $F^iF^j \subseteq F^{i+j}$ for any $i,j\geq 0$.
    \end{itemize}
    When $R$ is an algebra over some field $K$, we say that $(A,\cFb)$ is a  {\textit{filtered $K$-algebra}} or that $\cFb$ is a \textit{$K$-filtration on $A$} if $\cFb$ is a filtration on $A$ and $\cFb$ is a sequence of finite dimensional $K$-vector spaces with $F^0 =K$.
\end{definition}

Note that, with this convention, the order filtration is a filtration on $D_R$ but not a $K$-filtration on $A$.

\begin{definition}\label{def:filt2}
    Let $(R, \cFb)$ be a filtered ring. For a left (resp.~right) $A$-module $M$ and a sequence of abelian groups $\cGb = G^1, G^2,\ldots$, we say that $(M,\cGb)$ or $M$ is a  {\textit{filtered left (or right) $A$-module}} or that $\cGb$ is a  {\textit{left (or right) filtration on $M$ compatible with $\cFb$}} if 
    \begin{itemize}
        \item $\cGb$ is ascending: $G^1\subseteq G^2 \subseteq \cdots$,
        \item $\cGb$ is exhaustive: $\bigcup_i G^i = M$,
        \item $\cGb$ has the multiplicative property $F^iG^j \subseteq G^{i+j}$ (or $G^iF^j \subseteq G^{i+j}$) for any indices $i$ and $j$.
    \end{itemize}
\end{definition}

As in the definitions above, we will often use caligraphic font the name of a filtration and the corresponding roman letters for the elements in the sequence.

\begin{remark}
    Let $A$ be a ring and $M$ an $A$-module. For any filtration $\cFb$ of $A$, the module $M$ admits a filtration $\cGb$ compatible with $\cFb$.   Thus, the condition that $(M,\cGb)$ is an $(A,\cFb)$-module is not a restriction on $M$, but rather an enhancement of $M$.

    Moreover, if $\cFb$ is a $K$-filtration on $A$, and $M$ is countably generated, then $M$ admits a filtration $\cGb$ compatible with $\cFb$ by finite-dimensional $K$-vectorspaces: if $M=\sum_{i=0}^\infty A m_i$, one can take $G^i = \sum_{j=0}^i F^{i-j} m_j$.
\end{remark}

Since it is possible to impose multiple filtrations on a given ring or module, it is useful to compare two given filtrations for such objects. We will utilize the following notions to make such comparisons, which come from \cite{AHJNTW2}:
\begin{definition}
    Let $\cFb$ and $\cGb$ be two filtrations on a ring or module.
       We say $\cGb$ \textit{linearly dominates} $\cFb$ if there is some fixed $C>0$ such that $F^i\subseteq G^{Ci}$ for all indices $i\in \N$. If $\cFb$ and $\cGb$ linearly dominate each other, we say that they are \textit{linearly equivalent}.
\end{definition}
\begin{definition}
    Let $(A,\cFb)$ be a ring and let $M$ be an $R$-bimodule. If $\cGb$ is both a left filtration and a right filtration on $M$ compatible with $\cFb$, we say that $(M,\cGb)$ or $M$ is a  {\textit{filtered $A$-bimodule}}.
\end{definition}

\begin{remark}\label{rem:box}
Recall that if $A$ is a $K$-algebra, and $M$ is an $A$-bimodule, then $M$ can be viewed as an $A\otimes_K A^{\op}$-module by the rule
\[ (\sum_i r_i \otimes s_i) \cdot m = \sum_i r_i \cdot m \cdot s_i.\]
If $(A,\cFb)$ is a filtered $K$-algebra, then $A\otimes_K A^{\op}$ is a filtered $K$-algebra via the filtration  $\cF_{\boxtimes}^\bullet$ with $i$th term given by $F^i_{\boxtimes}:= F^i \otimes_K F^i$.
If  $(M,\cGb)$ is a filtered $(A,\cFb)$-bimodule, then $(M,\cGb)$ can equivalently be viewed as a filtered $(A\otimes_K A^{\op},\cF_{\boxtimes}^\bullet)$-module.
\end{remark}

\begin{definition}\label{def:dim}
    Let $(M,\cGb)$ be a filtered module over some filtered $K$-algebra. We denote the  {\textit{dimension of the filtration}} by $\Dim(\cGb)$, which we define by the following equivalent formulae: 
    \begin{align*}
        \Dim(\cGb)  &=  \inf\left\{t\in\RR_{\geq0} \mid \lim_{n\to\infty}{\frac{\dim_K{G^n}}{n^t}} = 0\right\}\\
                    &=  \inf\left\{t\in\RR_{\geq0} \mid \dim_K{G^n} \leq n^t\ \forall i \gg 0 \right\}\\
                    &=  \limsup_{n\to\infty}{\frac{\log(\dim_K(G^n))}{\log(n)}}.
    \end{align*}
    When the dimension $\Dim(\cGb) = d$ is finite we denote the  {\textit{multiplicity of the filtration}} by $\e(\cGb)$, defined as
        \begin{align*}
            \e(\cGb)    &=  \limsup_{n\to\infty}{\frac{\dim_K(G^n)}{n^d}} = \limsup_{n\to\infty}{\frac{d!\dim_K(G^n)}{n^d}}.
        \end{align*}
\end{definition}
\begin{remark}
If $\cFb$ and $\cGb$ are two linearly equivalent $K$-filtrations on a module $M$, then $\Dim(\cFb) = \Dim(\cGb)$, and moreover, if this common dimension is finite, then $0<\e(\cFb) < \infty$ if and only if $0<e(\cGb)<\infty$ \cite[Corollary~2.6]{AHJNTW2}.

However, for filtrations that are not linearly equivalent, these statements may fail.
\end{remark}

\begin{remark}\label{remark-double-filtration}
If $(A,\cFb)$ is a filtered $K$-algebra and $\Dim(\cFb), \ee(\cFb)$ are both finite, then
\[ \Dim(\cFb_{\boxtimes}) = 2 \Dim(\cFb) \quad \text{and} \quad \ee(\cFb_{\boxtimes}) = \ee(\cFb)^2.\]
\end{remark}

\begin{definition}\label{def:lin simple} Let $A$ be a (not necessarily commutative) algebra over a field $K$. We say that $A$ is \emph{simple} if $R$ has no nonzero proper two-sided ideals.
If $(A,\cFb)$ is a filtered $K$-algebra, we say that $(A,\cFb)$ is \emph{linearly simple} if there is a constant $C$ such that $r\in F^t\smallsetminus 0$ implies $1\in F^{Ct} r F^{Ct}$.
\end{definition}

\begin{remark}
It will be useful later to recast the notions of simplicity and linear simplicity in terms of the $A\otimes_K A^{\op}$-module structure on $A$ corresponding to its natural bimodule structure. We have that
\begin{itemize}
\item $A$ is simple if and only if for any nonzero $r\in A$, there is some $\alpha\in A\otimes_K A^{\op}$ with $\alpha(r)=1$, and
\item $(A,\cFb)$ is linearly simple if and only if there is a constant $C$ such that for any $r\in F^t$, there is some $\alpha\in F^{Ct}_{\boxtimes}$ with $\alpha(r) =1$.
\end{itemize}
\end{remark}

\subsection{Filtrations on rings of differential operators}

In the sequel, we will need to consider various constructions of filtrations on rings of differential operators.

\subsubsection{Generator filtrations}

\begin{definition}\label{def:gen filt}
    Let $R$ be a finitely generated algebra over a field $K$. Suppose that the ring of differential operators $D_{R|K}$ is a finitely generated $K$-algebra. For a finite generating set $S=\{\delta_1,\dots,\delta_t\}$ of $D_{R|K}$, set $V^0=K\cdot 1$, $V^1=V^0 + K \cdot S$, and $V^i = K \cdot (V^1)^i$ for all $i>1$. We call this the \textit{generator filtration} on $D_{R|K}$ associated to the generating set $S$.
\end{definition}

The following is standard; cf.~\cite[Proposition~2.13]{AHJNTW2}:
\begin{lemma}\label{fg-filt-comapre} Under the hypotheses of Definition~\ref{def:gen filt}, if $S$ a finite generating set for $D_{R|K}$ with corresponding filtration $\cFb$, and $\cGb$ is any other filtration, then $\cFb$ is linearly dominated by $\cGb$. In particular, if
$S'$ is any other finite generating set for $D_{R|K}$ and $\cGb$ is the associated generator filtration, then $\cFb$ and $\cGb$ are linearly equivalent.
\end{lemma}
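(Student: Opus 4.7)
The plan is to exploit the exhaustiveness of $\cGb$ to absorb the finitely many generators (and the identity) into a single level of $\cGb$, and then propagate this containment to higher orders using the multiplicative property of the filtration. Concretely, since $D_{R|K} = \bigcup_i G^i$ and the set $S \cup \{1\} = \{1, \delta_1, \dots, \delta_t\}$ is finite, each element lies in some $G^{c_j}$. Setting $C = \max_j c_j$, I obtain $V^1 = K \cdot 1 + K \cdot S \subseteq G^C$.

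From here, I would iterate the multiplicative property $G^a G^b \subseteq G^{a+b}$ to get $(V^1)^i \subseteq (G^C)^i \subseteq G^{Ci}$, and hence $V^i = K \cdot (V^1)^i \subseteq G^{Ci}$ for every $i \geq 0$ (the case $i=0$ absorbed into $G^C$ since $V^0 = K \cdot 1 \subseteq G^C$). This is precisely the statement that the generator filtration $\cFb$ is linearly dominated by $\cGb$ with constant $C$.

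For the ``in particular'' clause, I would apply the first assertion twice in a symmetric fashion. Given two generator filtrations $\cFb$ and $\cGb$ associated to finite generating sets $S$ and $S'$, each of them is itself ``any other filtration'' from the perspective of the other, so the first assertion yields constants $C, C' > 0$ with $F^i \subseteq G^{Ci}$ and $G^i \subseteq F^{C'i}$ for all $i$. This is exactly the definition of linear equivalence.

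The only delicate point—barely an obstacle—is bookkeeping about indices: in Definition~\ref{def:filt1} a general filtration $\cGb$ is not required to have $K$ sitting in $G^0$ (unlike a $K$-filtration), so one cannot simply assert $1 \in G^0$; one must instead invoke exhaustiveness to place $1$ in some finite $G^{c_0}$ and then absorb $c_0$ into the constant $C$. Once this is handled, the argument is a direct unwinding of the definitions, with no need to invoke anything special about differential operators beyond the fact that $D_{R|K}$ is generated by $S$ as a $K$-algebra.
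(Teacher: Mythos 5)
Your proof is correct and is the standard argument for this standard fact (the paper itself omits the proof and cites~\cite[Proposition~2.13]{AHJNTW2}): absorb the finite generating set $S\cup\{1\}$ into a single filtered piece $G^C$ by exhaustiveness, then propagate by the multiplicative property to get $V^i = K\cdot(V^1)^i \subseteq (G^C)^i \subseteq G^{Ci}$, and deduce linear equivalence of two generator filtrations by symmetry. Your bookkeeping remark about $1 \in G^{c_0}$ is exactly the right care to take; the only implicit assumption (consistent with how these filtrations are used for $\dim_K$ throughout the paper) is that the $G^i$ are $K$-subspaces, so that $K\cdot(V^1)^i \subseteq G^{Ci}$ follows from $(V^1)^i \subseteq G^{Ci}$.
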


\subsubsection{Bernstein filtrations}\label{sssec:BF}

The condition that $D_{R|K}$ is a finitely generated $K$-algebra is restrictive and not well-understood in general. The Bernstein filtrations defined in \cite{AHJNTW2} are defined for rings of differential operators over graded rings (without finite generation hypotheses on the ring of differential operators), and satisfy similar properties to generator filtrations.

\begin{definition} [{\cite[Definition~4.15]{AHJNTW2}}]\label{def:Ber filt} Let $R$ be a finitely generated $\NN$-graded algebra over a field $K$ with $R_0=K$. For an integer $w$ such that $R$ is generated as a $K$-algebra by elements of degree strictly less than $w$, the \textit{Bernstein filtration} with slope $w$ on $D_{R|K}$ is the $K$-algebra filtration $\cBb$ given by 
\[B^i = K \cdot \{ \delta \ \mathrm{homogeneous} \ | \ \deg(\delta) + w \, \mathrm{ord}(\delta) \leq i\}.\]
\end{definition}

We note that the classical Bernstein filtration on a standard graded polynomial ring is the Bernstein filtration with slope $2$.

\begin{remark}\label{rem:intersection}
    For a Bernstein filtration $\cBb$, any element of $f\in D_{R}^0 \cap B^i$ must have order zero, since any such $f$ would be an element of $D^0 \cong R$ and thus has no partial derivative factors in any of its terms. Thus
    \begin{align*}
        D_R^0 \cap B^i    &=  \{f\in R\ | \ \deg(f) +w\ord(f)\leq i\}\\  
                        &=  \{f\in R\ | \ \deg(f)\leq i\}\\ 
                        &=  [R]_{\leq i}
    \end{align*}
\end{remark}

We also omit the slope from the notation of the Bernstein filtration; for our purposes, this is justified by the following Lemma.

\begin{lemma}[{{\cite[Proposition~4.20]{AHJNTW2}}}]\label{lem:berfilt} In the setting of Definition~\ref{def:Ber filt}, for $w$ and $w'$ as in the definition, then the associated Bernstein filtrations are linearly equivalent.
\end{lemma}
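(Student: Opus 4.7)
The plan is to show mutual linear domination between the Bernstein filtrations $\cBb_{(w)}$ and $\cBb_{(w')}$ associated to valid slopes $w$ and $w'$. By symmetry, it suffices to fix $w \leq w'$ and prove there is some constant $C$ (depending on $w,w'$) with $B^i_{(w)} \subseteq B^{Ci}_{(w')}$ for every $i$. Since each $B^i_{(\bullet)}$ is the $K$-span of its homogeneous elements, it suffices to verify the containment on homogeneous operators.

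The key technical step, and really the only substantive one, is a lower bound: for any homogeneous $\delta \in D_{R|K}$ of order $n$,
\[ \deg(\delta) \geq -n(w-1). \]
To prove this, choose a homogeneous $K$-algebra generating set $S$ for $R$ with every generator of degree at most $w-1$. Lemma~\ref{lem:bracket-order}(2) yields $s_1,\dots,s_n \in S$ such that $[\delta]^{1,\dots,1}_{s_1,\dots,s_n}$ is a nonzero element of $D^0_{R|K}$. A single commutator $[-,s_j]$ shifts degree by $\deg(s_j)$, so this iterated bracket is homogeneous of degree $\deg(\delta) + \deg(s_1) + \cdots + \deg(s_n)$. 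Under the identification $D^0_{R|K} \cong R$, this nonzero element lives in $R$, which is $\NN$-graded, so its degree is nonnegative. Since each $\deg(s_j) \leq w-1$, we obtain $\deg(\delta) \geq -n(w-1)$, as claimed.

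Now suppose $\delta$ is homogeneous with $\delta \in B^i_{(w)}$, so $\deg(\delta) + w \cdot \ord(\delta) \leq i$. Combining this with the lower bound yields
\[ i \;\geq\; \deg(\delta) + w \cdot \ord(\delta) \;\geq\; -(w-1)\ord(\delta) + w \cdot \ord(\delta) \;=\; \ord(\delta), \]
so $\ord(\delta) \leq i$. Then, using $\deg(\delta) \leq i - w \cdot \ord(\delta)$ and $w' \geq w$,
\[ \deg(\delta) + w'\,\ord(\delta) \;\leq\; i + (w'-w)\ord(\delta) \;\leq\; i + (w'-w)i \;=\; (1 + w' - w)\,i. \]
Hence $\delta \in B^{Ci}_{(w')}$ for $C = 1 + w' - w$, proving $\cBb_{(w')}$ linearly dominates $\cBb_{(w)}$; the reverse inequality follows by interchanging $w$ and $w'$.

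I do not expect a major obstacle: the content is entirely captured by the commutator lower bound on $\deg(\delta)$ from Lemma~\ref{lem:bracket-order}(2), and the rest is a two-line arithmetic comparison between $w$ and $w'$. The only point that deserves care is the reduction to homogeneous $\delta$, which is justified because the defining condition for each $B^i_{(\bullet)}$ is imposed on homogeneous operators before $K$-spanning, so the containment for homogeneous elements immediately upgrades to all of $B^i_{(w)}$.
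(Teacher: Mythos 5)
The paper does not prove this lemma; it cites \cite[Proposition~4.20]{AHJNTW2} and moves on, so there is no internal argument to compare against. Your self-contained proof is correct and, in fact, closely mirrors the technique the authors themselves use in a nearby commented-out lemma in the source: reduce to a homogeneous operator $\delta$, use Lemma~\ref{lem:bracket-order}(2) to produce a nonzero order-zero iterated bracket, observe that its degree must be nonnegative because $R$ is $\NN$-graded, and thereby bound $\deg(\delta) \geq -(w-1)\ord(\delta)$; the rest is arithmetic, and the reduction to homogeneous elements is legitimate since each $B^i$ is by definition the $K$-span of its homogeneous members.

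One small expository slip: the reduction to $w \leq w'$ via ``by symmetry'' and the closing remark that ``the reverse inequality follows by interchanging $w$ and $w'$'' do not quite work, because interchanging would violate the standing assumption $w \leq w'$. What is true, and what your argument actually needs, is that the reverse containment is immediate without any bracket argument: if $w \leq w'$ and $\delta$ is homogeneous with $\deg(\delta) + w'\ord(\delta) \leq i$, then since $\ord(\delta) \geq 0$ one has $\deg(\delta) + w\,\ord(\delta) \leq \deg(\delta) + w'\ord(\delta) \leq i$, so $B^i_{(w')} \subseteq B^i_{(w)}$ with constant $C'=1$. Replacing the symmetry appeal with this one-line observation closes the gap; everything else in your proof is sound, including the constant $C = 1 + w' - w$ in the other direction.
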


We also have the following.

\begin{lemma}[{\cite[Proposition~4.19]{AHJNTW2}}]\label{lem:lindom}
In the setting of Definition~\ref{def:Ber filt}, the Bernstein filtration $\cBb$ is linearly dominated by the order filtration.
\end{lemma}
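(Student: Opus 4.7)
The goal is to produce a constant $C$ with $B^i\subseteq D^{Ci}$ for every $i$; I plan to show that in fact $C=1$ works. Since $B^i$ is the $K$-span of homogeneous operators $\delta$ satisfying $\deg(\delta)+w\cdot\mathrm{ord}(\delta)\leq i$, and since $D^i$ is a $K$-subspace closed under $K$-linear combinations of operators of order at most $i$, it suffices to prove that any such homogeneous $\delta$ has $\mathrm{ord}(\delta)\leq i$.

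The key step is to bound $\deg(\delta)$ from below in terms of its order $k:=\mathrm{ord}(\delta)$. By hypothesis $R$ is $\NN$-graded with $R_0=K$ and generated as a $K$-algebra by elements of degree strictly less than $w$, so I can fix a finite homogeneous generating set $S\subseteq R$ whose elements have degrees in $\{1,\ldots,w-1\}$. Applying Lemma~\ref{lem:bracket-order}(2) to $S$ produces $s_1,\ldots,s_k\in S$ such that the iterated commutator $[\delta]^{1,\ldots,1}_{s_1,\ldots,s_k}$ is a nonzero element of $D^0=R$. Because the commutator of homogeneous operators is homogeneous with additive degrees, this bracket lies in $R_d$ for $d=\deg(\delta)+\sum_{j=1}^k\deg(s_j)$; as $R$ is $\NN$-graded, nonvanishing forces $d\geq 0$, so $\deg(\delta)\geq -\sum_j\deg(s_j)\geq -k(w-1)$.

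Combining this lower bound with the Bernstein condition $\deg(\delta)+wk\leq i$ yields $-k(w-1)+wk\leq i$, i.e., $k\leq i$, as needed. The only step that requires any care is the lower bound on $\deg(\delta)$: the Bernstein condition alone cannot control the order, since homogeneous operators may have arbitrarily negative degree, so the content of the proof lies in using Lemma~\ref{lem:bracket-order}(2) to witness the order via an iterated commutator whose total degree shift is bounded by the finitely many generating degrees.
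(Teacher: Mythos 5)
The paper does not supply its own proof of this lemma; it cites \cite[Proposition~4.19]{AHJNTW2} as an external reference. Your argument is correct: you choose a finite homogeneous generating set $S$ of $R$ with degrees in $\{1,\dots,w-1\}$ (valid since $R_0=K$ and $R$ is generated in degrees $<w$), invoke Lemma~\ref{lem:bracket-order}(2) to realize $\ord(\delta)=k$ as the length of an iterated bracket landing in $D^0\smallsetminus 0\cong R\smallsetminus 0$, observe that this bracket is homogeneous of degree $\deg(\delta)+\sum_j\deg(s_j)$ and hence has nonnegative degree since $R$ is $\NN$-graded, and deduce $\deg(\delta)\geq -k(w-1)$; combined with $\deg(\delta)+wk\leq i$ this gives $k\leq i$, i.e., $B^i\subseteq D^i$. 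The only minor points to keep straight, which you handle correctly, are that $D^i$ is a $K$-subspace (so the containment passes to the $K$-span) and that the $s_j$ coming from Lemma~\ref{lem:bracket-order}(2) may be taken homogeneous because $S$ is chosen homogeneous. Note that you obtain the sharp constant $C=1$, which is stronger than what the lemma strictly requires.
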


\subsubsection{Localized filtrations}

\begin{definition}\label{def:loc filt}
Let $R$ be a finitely generated algebra over a field $K$.
Let $\cFb$ be a filtration on $D_{R|K}$. Let $f\in R$. We define the \textit{localized filtration at $f$} of $\cFb$ to be the smallest filtration $\cGb$ on $D_{R_f|K}$ such that $F^n\subset G^n$ for all $n$ and $f^{-1}\in G^1$; concretely
\[ G^n = \sum_{\substack{\sum a_i+\sum b_i \leq n \\ a_i, b_i \geq 0}} f^{-{a_1}} F^{b_1} \cdots f^{-{a_m}} F^{b_m} .\]
\end{definition}

\begin{remark} If $D_{R|K}$ is finitely generated and $\cF^{\bullet}$ is a generator filtration of $D_{R|K}$, then the corresponding localized filtration on $D_{R_f|K}$ is just the generator filtration on $D_{R_f|K}$ obtained by adjoining $f^{-1}$ to the generating set.
\end{remark}

\begin{proof}
Let $R$ be a ring such that $D_{R|K} = K[S]$ is a finitely generated $K$-algebra, and let $f\in R$ be nonzero. Let $\cFb$ be the generator filtration of $D_{R_f|K}$ for the generating set $\{1/f\} \cup S$, and let $\cGb$ be the generator filtration of $D_{R|K}$ for the generating set $S$. Finally let $\mathcal{H}^\bullet$ be the localization of $\cGb$ at $f$. Our goal is to show that $F^i = H^i$.

Note that for any fixed $i\in \N$, a $K$-basis for $F^i$ is given by the monomial expression in $\{1/f\}\cup S$ with at most $i$ factors, which is also a basis for $H^i$. More explicitly, observe that
\begin{align*}
    F^i &=  K\cdot\left\{\frac{1}{f} \cup S\right\}_i\\
        &=  K\cdot\left\{f^{-a_1}s_1f^{-a_2}s_2\cdots f^{-a_m}s_m \mid s_i\in K[S], \sum a_i + \sum \deg(s_i) \leq n\right\}\\
        &=  \sum_{\substack{\sum a_i+\sum b_i \leq n \\ a_i, b_i \geq 0}} f^{-{a_1}} F^{b_1}\cdots f^{-a_m}F^{b_m} = H^i. \qedhere
\end{align*}
\end{proof}

\begin{lemma}\label{lem:loc-ber-const}
    Let $R$ be an $\N$-graded $K$-algebra over a field $K$ and $f\in R$. Let $\cBb$ be a Bernstein filtration on $D_{R|K}$, and $\cGb$ the localized filtration of $\cBb$ at $f$. Then there exists a constant $C$ such that $G^n \subseteq f^{-n} B^{Cn}$ for all $n$. 
\end{lemma}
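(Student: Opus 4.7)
The plan is to rewrite a general element of $G^n$ using Lemma~\ref{lemma:rearragnement} to collect all powers of $f$ to the left, and then to bound the resulting commutators in the Bernstein filtration. By Lemma~\ref{lem:berfilt} I may enlarge the slope $w$ of $\cBb$, so I will assume $w$ exceeds both the degrees of the algebra generators of $R$ and $d := \deg(f)$ (the largest degree among the homogeneous components of $f$). Since $G^n$ is $K$-spanned by products $\delta' = f^{-a_1}\delta_1 \cdots f^{-a_m}\delta_m$ with homogeneous $\delta_j \in B^{b_j}$ and $\sum_i a_i + \sum_i b_i \leq n$, it suffices to show each such $\delta'$ lies in $f^{-n} B^{Cn}$ for a uniform constant $C$.

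I apply Lemma~\ref{lemma:rearragnement} (padding by $\delta_0 = 1$ on the left and $f^0$ on the right, observing that $[\delta_0]^{i_0}_f = 0$ unless $i_0 = 0$) to rewrite
\[ \delta' = \sum_{\underline{i}} c_{\underline{i}}\, f^{-A - I}\, [\delta_1]^{i_1}_f \cdots [\delta_m]^{i_m}_f, \]
where $A = \sum_j a_j$, $I = \sum_j i_j$ with $i_j \leq \ord(\delta_j)$, and $c_{\underline{i}} \in K$.

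The crucial technical input is the bound $\ord(\delta) \leq b$ for every homogeneous $\delta \in B^b$. I would prove this by induction on order using Lemma~\ref{lem:bracket-order}: for $\delta$ of order $k \geq 1$, choose a generator $g$ of $R$ so that $[\delta, g]$ has order exactly $k - 1$; by induction $\deg([\delta, g]) \geq -(w - 1)(k - 1)$, and since $\deg(g) \leq w - 1$ this gives $\deg(\delta) \geq -(w - 1) k$, which combined with $\deg(\delta) + w\,\ord(\delta) \leq b$ forces $\ord(\delta) \leq b$. Consequently $I \leq \sum_j \ord(\delta_j) \leq \sum_j b_j \leq n - A$, so $n - A - I \geq 0$. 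Separately, each bracket with $f$ lowers the Bernstein filtration degree by at least $w - d \geq 1$ (the order drops by one, so we save $w$, while the degree grows by at most $d$), so $[\delta_j]^{i_j}_f \in B^{b_j - i_j(w-d)} \subseteq B^{b_j}$, and therefore $\prod_j [\delta_j]^{i_j}_f \in B^{\sum_j b_j} \subseteq B^n$.

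Finally, I factor $f^{-A - I} = f^{-n} \cdot f^{n - A - I}$; since $n - A - I \geq 0$, the power $f^{n - A - I} \in B^{(n - A - I) d}$ is a genuine element of $D_{R|K}$. Each summand of the expansion thus lies in
\[ f^{-n}\cdot B^{(n - A - I)d} \cdot B^{\sum_j b_j} \subseteq f^{-n}\, B^{nd + n} = f^{-n}\, B^{n(d+1)}, \]
so $\delta' \in f^{-n} B^{n(d+1)}$ and $C = d + 1$ suffices. The main obstacle is establishing the bound $\ord(\delta) \leq b$ on $B^b$; without it, the exponent $n - A - I$ could be negative and the clean extraction of $f^{-n}$ from each summand would fail.
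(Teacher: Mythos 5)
Your proof is correct and follows essentially the same route as the paper: decompose a general element of $G^n$ into monomials $f^{-a_1}\delta_1\cdots f^{-a_m}\delta_m$, apply Lemma~\ref{lemma:rearragnement} to push all $f$-powers to the left, then bound both the extracted nonnegative $f$-power and the product of iterated brackets inside $\cBb$. The crucial technical input you identify---that $\ord(\delta)\le b$ for any $\delta\in B^b$, equivalently $B^b\subseteq D^b_{R|K}$, via the inductive degree bound $\deg(\delta)\ge -(w-1)\ord(\delta)$ for homogeneous $\delta$---is both correct and genuinely important: it is exactly what guarantees $I=\sum_j i_j\le \sum_j b_j\le n-A$, so that the exponent $n-A-I$ is nonnegative and $f^{-n}$ factors out cleanly. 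The paper instead cites Lemma~\ref{lem:lindom} to get a constant $e$ with $B^k\subseteq D^{ek}$ and then bounds $\sum i_j\le e\sum b_j$; your sharpening to $e=1$ is cleaner, and is in fact what the extraction step secretly requires.

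The one step you should repair is the opening ``by Lemma~\ref{lem:berfilt} I may enlarge the slope $w$.'' Linearly equivalent Bernstein filtrations give linearly equivalent localized filtrations, but the conclusion $G^n\subseteq f^{-n}B^{Cn}$ is not obviously invariant under that equivalence: passing from $\cGb$ to $\cGb'$ replaces $G^n$ by $(G')^{c'n}\subseteq f^{-c'n}(B')^{C'c'n}$, and the extra $f^{-(c'-1)n}$ cannot be reabsorbed into any $B^{kn}$. Fortunately the enlargement is unnecessary. Without assuming $w>d$, one still has $[B^k,f]\subseteq B^{k+d-1}$ for every valid slope (a single bracket raises the degree by at most $d$ and drops the order by one, saving $w\ge 1$), so $[\delta_j]^{i_j}_f\in B^{b_j+i_j(d-1)}$, the product of brackets lies in $B^{\sum b_j+(d-1)\sum i_j}\subseteq B^{nd}$ using your bound $\sum i_j\le n$, and together with $f^{n-A-I}\in B^{nd}$ you get $G^n\subseteq f^{-n}B^{2dn}$ for the original $\cBb$, with no change of slope.
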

\begin{proof}
    Take $\delta\in G^n$. Fix $d$ such that $f\in B^d$, and, by Lemma~\ref{lem:lindom}, a constant $e$ such that $B^k \subseteq D^{ek}$ for all $k$.
    
    By definition, we can rewrite $\delta$ as a sum of elements of the form
    \[ \eta=  \delta_1 f^{-a_1} \delta_2 f^{-a_2} \cdots  \delta_t f^{-a_t}\]
    with $\delta_j \in B^{b_j}$, where $\sum a_j + \sum b_j \leq n$. By Lemma~\ref{lemma:rearragnement}, we can rewrite $\eta$ as a sum of elements of the form
    \[ A f^{-\sum a_j - \sum i_j} [\delta_1]^{i_1}_{f} \cdots [\delta_t]^{i_t}_{f}\]
    for some constant $\alpha \in \Z$ and $0\leq i_j \leq \ord(\delta_i)$. We have $\ord(\delta_i)\leq e b_j$, and since ${[B^k,f] \subseteq B^{k+d-1}}$, we also have $[\delta_j]^{i_j}_{f}\subseteq B^{b_j+ e(d-1) b_j}$. 

    We then have 
    \[\begin{aligned} \alpha &f^{-\sum a_j - \sum i_j} [\delta_1]^{i_1}_{f} \cdots [\delta_t]^{i_t}_{f} = \alpha f^{-n} f^{n-\sum i_j} [\delta_1]^{i_1}_{f} \cdots [\delta_t]^{i_t}_{f} \\
    &\subseteq f^{-n} B^{nd} B^{b_1+ e(d-1) b_1} \cdots B^{b_t+ e(d-1) b_t} \subseteq f^{-n} B^{nd+ (e(d-1) + 1)(\sum b_j)} \subseteq f^{-n} B^{n(d+ed - e + 1)}, \end{aligned}\]
    so the constant $C=ed+d-e+1$ works.
\end{proof}

\subsubsection{Conditions for linear simplicity}

 \begin{lemma}\label{reduce to R bar}
        Let $R$ be a finitely generated algebra over a field $K$. Let $\cFb$ be a filtration of $D_{R|K}$ that is linearly dominated by the order filtration. Suppose that there is some $C>0$ such that for all nonzero $f\in D^0 \cap F^n$ one has $1\in F^{Cn} f F^{Cn}$. Then $(D_{R|K},\cFb)$ is  linearly simple. 
    \end{lemma}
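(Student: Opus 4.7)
The plan is to bootstrap from the hypothesis on $D^0 \cap F^n$ to all of $F^n$ by using iterated commutators with a finite generating set of $R$ to ``collapse'' an arbitrary $\delta \in F^n$ down to a nonzero operator of order zero, while keeping track of filtration levels.

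First, I would fix a finite $K$-algebra generating set $s_1,\dots,s_t$ of $R$. Since $R = D^0 \subseteq D_{R|K}$ and $\cFb$ is exhaustive, each $s_j$ lies in some $F^{k_j}$; set $d = \max_j k_j$, so $s_1,\dots,s_t \in F^d$. By hypothesis there is a constant $\alpha > 0$ with $F^n \subseteq D^{\alpha n}$ for every $n$. Now let $\delta \in F^n$ be nonzero, and set $m = \ord(\delta) \leq \alpha n$. By Lemma~\ref{lem:bracket-order}(2), there exists a sequence $s_{i_1},\dots,s_{i_m}$ of generators such that the iterated commutator
\[ f := [\delta]^{1,\dots,1}_{s_{i_1},\dots,s_{i_m}} \]
is a \emph{nonzero} element of $D^0$. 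By the multiplicative property of $\cFb$ applied $m$ times (commutators live in the sum of filtration levels since $F^a F^b \subseteq F^{a+b}$), we have $f \in F^{n+md} \subseteq F^{(1+\alpha d)n}$.

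Next, I would apply the hypothesis to $f \in D^0 \cap F^{(1+\alpha d)n}\smallsetminus 0$ to obtain elements $u_1,\dots,u_N, v_1,\dots,v_N \in F^{C(1+\alpha d)n}$ with $\sum_j u_j f v_j = 1$. To conclude, I would expand $f$ as a sandwich sum around $\delta$. A straightforward induction on $m$ shows
\[ [\delta]^{1,\dots,1}_{s_{i_1},\dots,s_{i_m}} = \sum_k \varepsilon_k \sigma_k \delta \sigma_k', \]
where $\varepsilon_k \in \{\pm 1\}$ and each $\sigma_k, \sigma_k'$ is a product of some of the $s_{i_j}$'s (with the two indexing subsets complementary and union equal to $\{1,\dots,m\}$). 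The multiplicative property gives $\sigma_k, \sigma_k' \in F^{md} \subseteq F^{\alpha n d}$. Substituting, we find
\[ 1 = \sum_{j,k} \varepsilon_k (u_j \sigma_k)\, \delta\, (\sigma_k' v_j), \]
where $u_j \sigma_k,\ \sigma_k' v_j \in F^{C(1+\alpha d)n + \alpha n d} = F^{C'n}$ with $C' = C(1+\alpha d) + \alpha d$. This exhibits $1 \in F^{C'n}\, \delta\, F^{C'n}$, proving linear simplicity.

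The only genuinely nontrivial bookkeeping is the inductive expansion of the $m$-fold iterated commutator into sandwich terms $\sigma_k \delta \sigma_k'$ with both sides in $F^{md}$; this expansion follows because each commutator step $[\,\cdot\,, s_{i_j}]$ adds precisely one factor of $s_{i_j}$ to one side of $\delta$ in each summand, so after $m$ steps the total $F$-degree accumulated on the two sides combined is exactly $md$. Everything else is routine tracking of the two linear comparisons $m \leq \alpha n$ and $s_j \in F^d$.
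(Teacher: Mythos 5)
Your proof is correct and follows essentially the same strategy as the paper: fix a generating set $S \subseteq F^d$, bound the order of $\delta \in F^n$ by $\alpha n$, use Lemma~\ref{lem:bracket-order}(2) to collapse $\delta$ to a nonzero $f \in D^0$ via iterated commutators while tracking filtration degrees, and then sandwich the hypothesis around the commutator expansion. The paper states the key containment more compactly as $f \in F^{am}\delta F^{am}$ rather than writing out the signed sum $\sum_k \varepsilon_k \sigma_k \delta \sigma_k'$, which costs it a slightly worse constant ($C(2\alpha d + 1) + \alpha d$ versus your $C(1 + \alpha d) + \alpha d$), but the argument is the same.
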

    \begin{proof}
    Let $b\in \NN$ be such that $\cF^n \subseteq D^{bn}_{R|K}$.
     Fix a finite generating set $S$ of $R$ as a $K$-algebra. Since $\mathrm{span}_K(S)$ is finite dimensional, there is some $a\in \NN$ such that $S\subseteq F^a$. Let $\delta\in F^n \smallsetminus 0 \subseteq D_{R|K}^{bn} \smallsetminus 0$. By Lemma~\ref{lem:bracket-order} above, there exists a sequence $s_1,\dots,s_{m}\in S$ with $m\leq bn$ such that $[\delta]^{1,\dots,1}_{s_1,\dots,s_m} \in D^0 \smallsetminus 0$. Since each $s_i\in F^a$, we have \[f=[\delta]^{1,\dots,1}_{s_1,\dots,s_m} \in F^{am} \delta F^{am} \subseteq F^{abn} \delta F^{abn} \subseteq F^{(2ab+1)n}\cap D^0 \smallsetminus 0.\] Thus, by the hypothesis,
     \[ 1\in F^{(2ab+1)cn} f F^{(2ab+1)cn} \subseteq F^{(2ab+1)cn} F^{abn} \delta F^{abn} F^{(2ab+1)cn} \subseteq F^{(2abc+ab+c)n} \delta F^{(2abc+ab+c)n}.\]   
     Thus $(D_{R|K},\cFb)$ is linearly simple with constant $2abc+ab+c$.
    \end{proof}
      
    The following proposition allows us to compare the notions of linear simplicity for different filtrations discussed above.
    
    \begin{proposition}\label{prop:simple-filtrations}
    Let $R$ be a finitely generated algebra over a field $K$.
    \begin{enumerate}
        \item If $D_{R|K}$ is finitely generated, then linear simplicity is equivalent for any two generator filtrations of $D_{R|K}$;
        \item If $R$ is $\NN$-graded with $R_0=K$, then linear simplicity is equivalent for any two Bernstein filtrations of $D_{R|K}$.
        \item If $R$ is $\NN$-graded with $R_0=K$ and $D_{R|K}$ is finitely generated, then linear simplicity for any generator filtration of $D_{R|K}$ implies linear simplicity for any Bernstein filtration of $D_{R|K}$.
        \item If $R$ is $\NN$-graded with $R_0=K$, $f\in R$, and $D_{R_f|K}$ is finitely generated, then linear simplicity for any generator filtration of $D_{R_f|K}$ implies linear simplicity for any localized (at $f$) Bernstein filtration on $D_{R_f|K}$.
    \end{enumerate}
     \end{proposition}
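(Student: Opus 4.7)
Parts (1) and (2) rest on the observation that linear simplicity is preserved under linear equivalence of filtrations: if $F^i \subseteq G^{C_1 i}$ and $G^i \subseteq F^{C_2 i}$, then for any nonzero $r\in G^t$ one has $r\in F^{C_2 t}$, so linear simplicity of $\cFb$ with constant $C$ gives $1\in F^{CC_2 t} r F^{CC_2 t} \subseteq G^{C_1 CC_2 t} r G^{C_1 CC_2 t}$. Part (1) is then immediate from Lemma~\ref{fg-filt-comapre}, and part (2) from Lemma~\ref{lem:berfilt}.

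For parts (3) and (4), the plan is to reduce each to Lemma~\ref{reduce to R bar}, so that it suffices to verify two things for the target filtration $\cFb$: that $\cFb$ is linearly dominated by the order filtration, and that there is a constant $C$ such that every nonzero $f'\in D^0\cap F^n$ satisfies $1\in F^{Cn}f'F^{Cn}$. In both cases I will leverage the hypothesized linear simplicity of a generator filtration of the relevant ring to handle the second condition, then transport the result to the target filtration using Lemma~\ref{fg-filt-comapre}.

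For (3), linear domination by the order filtration is Lemma~\ref{lem:lindom}, and Remark~\ref{rem:intersection} identifies $D^0\cap B^n$ with $[R]_{\leq n}$. Using part (1), I will replace the generating set of $D_{R|K}$ by one that additionally contains a finite homogeneous generating set of $R$ in positive degrees; the resulting generator filtration $\cFb_{\text{gen}}$ is still linearly simple and satisfies $[R]_{\leq n}\subseteq F^n_{\text{gen}}$, since any element of $R$ of degree at most $n$ can be written as a polynomial of total degree at most $n$ in the new generators. Linear simplicity of $\cFb_{\text{gen}}$ then yields $1\in F^{C_0 n}_{\text{gen}} f' F^{C_0 n}_{\text{gen}}$ for some constant $C_0$, and the containment $F^i_{\text{gen}}\subseteq B^{C_1 i}$ provided by Lemma~\ref{fg-filt-comapre} upgrades this to $1\in B^{C_0 C_1 n} f' B^{C_0 C_1 n}$, completing the reduction.

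For (4), write $\cBb$ for the Bernstein filtration on $D_{R|K}$ and $\cGb$ for its localization at $f$. Lemma~\ref{lem:loc-ber-const} gives $G^n\subseteq f^{-n}B^{Cn}$ for some $C$; combined with Lemma~\ref{lem:lindom} and the fact that multiplication by $f^{-n}$ preserves order, this shows $\cGb$ is linearly dominated by the order filtration on $D_{R_f|K}$. For the condition on $D^0$, any nonzero $f'\in R_f\cap G^n$ admits a presentation $f'=f^{-n}\delta$ with $\delta\in B^{Cn}$ by Lemma~\ref{lem:loc-ber-const}, and the order-zero constraint forces $\delta = f^n f' \in R\cap B^{Cn} = [R]_{\leq Cn}$. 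By part (1), I will enlarge the generating set of $D_{R_f|K}$ to include $f^{-1}$ together with positive-degree generators of $R$, so that $f^{-n}\in F^n_{\text{gen}}$, $\delta\in F^{Cn}_{\text{gen}}$, and hence $f'\in F^{(C+1)n}_{\text{gen}}$. Linear simplicity of this generator filtration combined with Lemma~\ref{fg-filt-comapre} then finishes the argument as in (3). The main technical point is this last step: the structural description of $\cGb$ from Lemma~\ref{lem:loc-ber-const} must be matched with a well-chosen generating set for $D_{R_f|K}$, and it is part (1) that affords the freedom to adjust the generating set while preserving the linear simplicity hypothesis.
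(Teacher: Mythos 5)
Your proposal is correct and follows essentially the same route as the paper's proof: parts (1) and (2) via preservation of linear simplicity under linear equivalence, and parts (3) and (4) by reducing to Lemma~\ref{reduce to R bar}, replacing the generating set (invoking part (1)) so that it contains homogeneous $R$-algebra generators and, in case (4), also $f^{-1}$, identifying $D^0\cap B^n$ with $[R]_{\le n}$ via Remark~\ref{rem:intersection}, and transporting the linear-simplicity estimate between the generator and (localized) Bernstein filtrations using Lemma~\ref{fg-filt-comapre} and Lemma~\ref{lem:loc-ber-const}. The only difference is cosmetic (you re-prove the linear-equivalence invariance inline rather than citing it, and you cite Lemma~\ref{fg-filt-comapre} where the paper cites the external Proposition~4.19 of AHJNTW2 for the same domination fact).
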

    \begin{proof} Cases (1) and (2) follow from Lemmas~\ref{fg-filt-comapre} and \ref{lem:berfilt} and the fact that linear simplicity is equivalent for two linearly equivalent filtrations \cite[Proposition~4.21]{AHJNTW2}.
    
    For (3), note that it suffices to choose any generating set for $D_{R|K}$ by (1). Pick a generating set that contains a homogeneous generating set for $R$ as a $K$-algebra, and let $\cFb$ be the corresponding filtration. Let $B^\bullet$ be any Bernstein filtration. Let $C$ be a constant of linear simplicity for $\cFb$, and $a$ be such that $F^m \subseteq B^{am}$ for all $m$, which exists by \cite[Proposition~4.19]{AHJNTW2}. Let $f\in B^n \cap D^0_{R|K}$ be nonzero. As $B^n \cap D^0_{R|K}\cong [R]_{\leq n}$, we have $f\in F^n$. Then \[ 1\subseteq F^{Cn} f  F^{Cn} \subseteq B^{Can} f B^{Can}.\] By Proposition~\ref{reduce to R bar}, we conclude that $B^\bullet$ is linearly simple.

For (4), let $\cBb$ be a Bernstein filtration on $D_R$, and $\cGb$ be the localized filtration of $\cBb$ at $f$ on $D_{R_f|K}$. 

Let $\cFb$ be a generator filtration for $D_{R_f|K}$. By Lemma~\ref{fg-filt-comapre}, without loss of generality, we can assume that a homogeneous algebra generating set for $R$ and $f^{-1}$ are contained in $F^1$. Suppose that $\cFb$ is linearly simple with constant $C_0$. By Lemma~\ref{lem:loc-ber-const}, there is a constant $C_1$ such that $G^n \subseteq f^{-n} B^{C_1 n}$.  Furthermore, since any generator filtration is linearly dominated by any other filtration, there is a constant $C_2$ such that $F^n \subseteq G^{C_2 n}$ for all $n$.

By Lemma~\ref{reduce to R bar}, to show that $\cGb$ is linearly simple, it suffices to show that there is a constant $C$ such that for any $h\in D_{R_f|K}^0 \cap G^n$, we have $1\in G^{Cn} h G^{Cn}$.

For $h\in D_{R_f|K}^0 \cap G^n$, we have $h\in f^{-n} B^{C_1 n} \cap D_{R_f|K}^0$. Writing $h=f^{-n} \delta$ with $\delta\in B^{C_1 n}$, we have $\delta=f^n h \in B^{C_1 n} \cap D^0_{R_f|K}$. Since $\delta \in B^{C_1 n} \cap D^0_{R_f|K} \subseteq D_R \cap D^0_{R_f|K} = D^0_{R|K}$, we have $\delta \in B^{C_1 n} \cap D^0_{R|K} \subseteq [R]_{\leq C_1 n}$, using Remark~\ref{rem:intersection}. From the assumption that $F^1$ contains a homogeneous algebra generating set for $R$, it follows that $[R]_{\leq C_1 n}\subseteq F^n$, and thus $h=f^{-n}\delta\in F^{(C_1+1)n}$.

Thus, we obtain
\[ 1\in F^{C_0 (C_1+1)n} h F^{C_0 (C_1+1)n} \subseteq G^{C_0 (C_1+1) C_2n} h G^{C_0 (C_1+1)C_2n}.\]
It follows that $\cGb$ is linearly simple with constant $C=C_0 (C_1+1)C_2$.     \end{proof}

    \subsection{Bernstein's inequality and linear simplicity}\label{ssec:BI}   Recall that Bernstein's inequality for a polynomial ring $R$ over a field $K$ of characteristic zero states that the dimension of any nonzero $D_R$-module is at least the dimension of $R$. In this setting, the dimension of a $D_R$-module can be characterized either by the dimension of the singular support (i.e., the dimension of an associated graded module of $M$ with respect to a good filtration compatible with the order filtration) or the dimension of a good filtration compatible with the Bernstein filtration, which is the minimal dimension of any filtration compatible with the Bernstein filtration. The last characterization of dimension is the one we work with in the results below.

   The connection between linear simplicity and Bernstein's inequality is given by the following result of Bavula:

\begin{theorem}[{\cite[Theorem~3.1]{Bav09}, \cite[Theorem~3.4]{AHJNTW2}}]
Let $(A,\cFb)$ be a filtered $K$-algebra. If $(A,\cFb)$ is linearly simple, then for any nonzero $(A,\cFb)$-module $(M,\cGb)$, we have
\[ \Dim(\cGb) \geq \frac12 \Dim(\cFb). \]
\end{theorem}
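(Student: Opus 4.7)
The plan is to prove Bernstein's inequality by constructing, for each $t$, an injective $K$-linear map from $F^t$ into a $\Hom$-space of finite-dimensional subspaces of $M$, and then converting the resulting dimension bound into a bound on $\Dim$.

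First I would fix a nonzero $m_0 \in M$ and an index $k$ with $m_0 \in G^k$; we may assume each $G^n$ is finite-dimensional over $K$, since otherwise $\Dim(\cGb)=\infty$ and the inequality is vacuous. Let $C$ be a constant witnessing the linear simplicity of $(A,\cFb)$. For each $t \in \NN$ I would consider the evaluation map
\[ \Phi_t : F^t \longrightarrow \Hom_K\bigl(G^{k+Ct},\, G^{k+(C+1)t}\bigr),\qquad \delta \longmapsto \bigl(g \mapsto \delta g\bigr), \]
which is well-defined by the module filtration compatibility $F^t \cdot G^{k+Ct} \subseteq G^{k+(C+1)t}$.

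The heart of the argument is the injectivity of $\Phi_t$, and this is precisely where linear simplicity enters. Suppose $\delta \in F^t \setminus 0$ lies in $\ker(\Phi_t)$, so $\delta \cdot G^{k+Ct}=0$. By linear simplicity there exist $\alpha_1,\ldots,\alpha_r,\beta_1,\ldots,\beta_r \in F^{Ct}$ with $\sum_i \alpha_i \delta \beta_i = 1_A$. For each $i$, compatibility gives $\beta_i m_0 \in F^{Ct}\cdot G^k \subseteq G^{k+Ct}$, hence $\delta \beta_i m_0 = 0$, and therefore
\[ m_0 \;=\; \sum_i \alpha_i \delta \beta_i m_0 \;=\; 0, \]
contradicting the choice of $m_0$. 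So $\Phi_t$ is injective, which yields, for all $t \geq k$,
\[ \dim_K F^t \;\leq\; \dim_K G^{k+Ct} \cdot \dim_K G^{k+(C+1)t} \;\leq\; \bigl(\dim_K G^{(C+2)t}\bigr)^2. \]

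To finish, I would take logarithms, divide by $\log(t)$, and use that $\log((C+2)t)/\log(t) \to 1$ as $t \to \infty$. Passing to $\limsup$ and using the third characterization of $\Dim$ in Definition~\ref{def:dim}, together with the fact that $\limsup$ along the subsequence $(C+2)t$ is bounded by $\limsup$ along all indices, yields
\[ \Dim(\cFb) \;\leq\; 2\,\Dim(\cGb), \]
which is the desired inequality. The main technical step is the injectivity of $\Phi_t$; the key insight is that the offset $k+Ct$ in the source filtration level is chosen precisely so that the $\beta_i$'s (lying in $F^{Ct}$) map $m_0 \in G^k$ into the kernel of the action of $\delta$. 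Everything else is asymptotic bookkeeping.
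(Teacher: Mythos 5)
Your proof is correct, and it is the standard argument used in the cited references (Bavula and \`Alvarez Montaner--Hern\'andez--Jeffries--N\'u\~nez-Betancourt--Teixeira--Witt), since the paper itself only cites this result without reproving it. The key injectivity claim for $\Phi_t$ is exactly right: if $\delta \in F^t \smallsetminus 0$ killed $G^{k+Ct}$, writing $1 = \sum_i \alpha_i \delta \beta_i$ with $\alpha_i,\beta_i \in F^{Ct}$ would force $m_0 = \sum_i \alpha_i(\delta \beta_i m_0) = 0$, and the ensuing dimension count $\dim_K F^t \leq (\dim_K G^{(C+2)t})^2$ together with $\log((C+2)t)/\log(t) \to 1$ gives the result via the $\limsup$ characterization of $\Dim$. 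The only cosmetic points worth noting: the linear simplicity constant $C$ should be taken to be a positive integer (replace $C$ by $\lceil C\rceil$ if necessary) so that $F^{Ct}$ makes sense, and the reduction ``otherwise $\Dim(\cGb)=\infty$ and the inequality is vacuous'' is fine since the $F^i$ are required to be finite-dimensional in a $K$-filtration.
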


To connect this with the classical Bernstein's inequality and some of its applications, we recall the following definition from \cite{AHJNTW2}:

    \begin{definition}
        Let $R$ be a finitely generated $\N$-graded algebra over a field $K$ with ${R_0=K}$. We say that $R$ is a \textit{Bernstein algebra} if, for some Bernstein filtration $\cBb$ of $D_R$ we have
        \begin{itemize}
            \item $(D_R,\cBb)$ is linearly simple,
            \item $\Dim(D_R)=2 \dim(R)$, and
            \item $0<\e(D_R)<\infty$.
        \end{itemize}
    \end{definition}

    We note that since every Bernstein filtration on $D_R$ is linearly equivalent, each of the conditions above is independent of the choice of a Bernstein filtration.

   A large class of Bernstein algebras is given by rings of invariants of polynomial rings over rings of characteristic zero by linear actions of finite groups \cite[Theorem~5.5]{AHJNTW2}; in particular polynomial rings are Bernstein algebras.

   Towards a criterion for showing more rings are Bernstein algebras, we recall the graded version of the notion of \emph{differential signature} from \cite{BJNB}.

    \begin{definition}\label{def:diffsig}
        Let $K$ be a field and $R$ be a $K$-algebra finitely generated $\N$-graded with $R_0=K$ and $\m= R_{>0}$. The differential signature of $R$ is
        \[ \mathrm{s}^{\mathrm{diff}}_K(R) \colonequals \limsup_{n\to \infty} \frac{\dim(R)!}{n^{\dim(R)}}  \dim_K\left(\frac{R}{\m^{\langle n \rangle_K}}\right),\]
        where $\m^{\langle n \rangle_K} = \{ f\in R \ | \ D^{n-1}_{R|K} \cdot f \subseteq \m \}$.
    \end{definition}

\begin{proposition}\label{prop:posdiff}
Let $R$ be a finitely generated $\N$-graded algebra over a field $K$ with ${R_0=K}$. Let $\cBb$ be a Bernstein filtration on $D_R$. If $(D_R,\cBb)$ is linearly simple and
${\mathrm{s}^{\mathrm{diff}}_K(R)>0}$, then $R$ is a Bernstein algebra.

In particular, if $(D_R,\cBb)$ is linearly simple and $R$ is a direct summand of a polynomial ring, then $R$ is a Bernstein algebra.
\end{proposition}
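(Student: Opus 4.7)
The plan is to verify the two conditions in the definition of a Bernstein algebra beyond linear simplicity, namely $\Dim(D_R, \cBb) = 2\dim R$ and $0 < \ee(D_R, \cBb) < \infty$. Since every Bernstein filtration is linearly equivalent by Lemma~\ref{lem:berfilt}, it suffices to verify these for a single choice of slope $w$.

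For the upper bound on $\Dim(\cBb)$, I would apply Bavula's theorem to $R$ viewed as a left $(D_R, \cBb)$-module with the filtration $G^n := B^n \cdot 1$. Using Remark~\ref{rem:intersection} together with the observation that any homogeneous $\delta \in B^n$ has $\deg(\delta) \leq n$, one checks that $G^n = [R]_{\leq n}$; the Hilbert function of the finitely generated $\N$-graded $K$-algebra $R$ then gives $\Dim(\cGb) = \dim R$ and finite positive $\ee(\cGb)$. Bavula's theorem therefore yields $\Dim(\cBb) \leq 2\dim R$. For the upper bound $\ee(\cBb) < \infty$, I would unpack linear simplicity directly: the restriction-of-action map $\Phi : B^n \to \Hom_K(G^{Cn}, [R]_{\leq (C+1)n})$ is injective, since for any nonzero $\delta \in B^n$ the relation $1 \in B^{Cn}\,\delta\,B^{Cn}$ evaluated at $1$ exhibits some $\beta \in B^{Cn}$ with $\delta(\beta(1)) \neq 0$ and $\beta(1) \in G^{Cn}$; hence $\dim_K B^n \leq \dim_K G^{Cn} \cdot \dim_K [R]_{\leq (C+1)n} = O(n^{2\dim R})$.

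For the matching lower bound, positive differential signature is the key input. The degree-matched evaluation pairing $(\delta, f) \mapsto \delta(f) \bmod \m$ identifies $R/\m^{\langle n \rangle_K}$ with a subspace of the $K$-dual of $[D^{n-1}]_{\leq 0}$, and since any homogeneous operator of order at most $n-1$ and degree at most $0$ lies in $B^{w(n-1)}$, this already gives $\dim_K B^{w(n-1)} \geq \dim_K R/\m^{\langle n \rangle_K} \gtrsim n^{\dim R}$ along the subsequence realizing the $\limsup$ in $\mathrm{s}^{\mathrm{diff}}_K(R)$. To upgrade this to $\dim_K B^{Cn} \gtrsim n^{2\dim R}$ and hence $\Dim(\cBb) \geq 2\dim R$ with $\ee(\cBb) > 0$, I would combine these nonpositive-degree operators by multiplication with elements of $[R]_{\leq n}$, producing operators whose Bernstein-degree is bounded by a constant multiple of $n$. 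Linear independence of the resulting family would be established by tracking their actions on the Hilbert-growing finite-dimensional quotients $R/\m^{\langle k \rangle_K}$ and invoking linear simplicity to ensure that distinct product combinations act distinctly. Combining both bounds yields $\Dim(\cBb) = 2\dim R$ and $0 < \ee(\cBb) < \infty$, so $R$ is a Bernstein algebra.

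The ``in particular'' clause then follows by invoking the result of \cite{BJNB} that every graded direct summand of a polynomial ring over a characteristic-zero field has positive differential signature, so both hypotheses of the main statement are satisfied. The most delicate step is producing the factor-of-two doubling in the lower bound: the dual pairing alone contributes only $n^{\dim R}$, and the extra $n^{\dim R}$ requires a careful combination with multiplication by elements of $R$ together with an independence argument extracted from linear simplicity.
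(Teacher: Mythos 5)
The paper's own proof is a two-line citation: the main clause is exactly \cite[Theorem~4.32]{AHJNTW2} and the ``in particular'' clause invokes \cite[Theorem~6.15]{BJNB}. You instead reprove the cited result from scratch, which is a genuinely different route. Your upper bounds are solid: $G^n := B^n \cdot 1 = [R]_{\leq n}$ gives $\Dim(\cGb) = \dim R$ with finite positive multiplicity, and the injection $B^n \hookrightarrow \Hom_K(G^{Cn}, [R]_{\leq (C+1)n})$ extracted from linear simplicity gives $\dim_K B^n = O(n^{2\dim R})$ directly (so the Bavula invocation is actually redundant for the upper bound, though harmless). Small ordering caveat: $\ee(\cBb)$ is only defined relative to $\Dim(\cBb)$, so you need the lower bound to nail $\Dim(\cBb) = 2\dim R$ before the $O(n^{2\dim R})$ estimate can be read as $\ee(\cBb) < \infty$.

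The genuine gap, which you flag yourself, is the linear independence needed for the factor-of-two doubling in the lower bound. Your sketch is salvageable, but the mechanism you propose—``invoking linear simplicity to ensure that distinct product combinations act distinctly''—is not what does the work. What actually makes the argument go is the grading. Concretely: choose a homogeneous $K$-basis $\{s_j\}$ of $R/\m^{\langle n\rangle_K}$ and, by nondegeneracy of the evaluation pairing degree by degree, homogeneous operators $\delta_j \in D^{n-1}$ with $\deg(\delta_j) = -\deg(s_j)$ and $\delta_j(s_k) \equiv \delta_{jk} \pmod{\m}$. Take a homogeneous basis $\{r_i\}$ of $[R]_{\leq n}$. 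Any relation $\sum c_{ij}\, r_i \delta_j = 0$ splits into homogeneous pieces; within a piece, apply it to $s_{j_0}$ where $\deg(s_{j_0})$ is minimal among the $j$'s with $c_{ij} \neq 0$. By degree counting, $\delta_j(s_{j_0}) = 0$ when $\deg(s_j) > \deg(s_{j_0})$ and $\delta_j(s_{j_0}) = \delta_{j,j_0}$ when $\deg(s_j) = \deg(s_{j_0})$, so the relation collapses to $\sum_i c_{i,j_0}\, r_i = 0$ and hence $c_{i,j_0} = 0$. Iterating kills all coefficients. Thus $\{r_i\delta_j\}$ is a linearly independent family of size $\gtrsim n^{2\dim R}$ inside a fixed $B^{Cn}$, which is exactly the needed lower bound. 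You should also drop ``characteristic zero'' in the final paragraph: the paper asserts the ``in particular'' clause with no characteristic hypothesis, consistent with the cited \cite[Theorem~6.15]{BJNB}.
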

\begin{proof}
The first statement is \cite[Theorem~4.32]{AHJNTW2}. The second follows from \cite[Theorem~6.15]{BJNB}.
\end{proof}

The upshot of this condition is the following:

\begin{theorem}[{\cite[Theorem~4.32, Corollary~4.13, Proposition~4.8]{AHJNTW2}}]
If $R$ is a Bernstein algebra over a field $K$ and $\cBb$ is a Bernstein filtration on $D_R$ then 
\begin{itemize}
    \item For every nonzero $(D_R,\cBb)$-module $(M,\cGb)$, we have $\Dim(\cGb)\geq \dim(R)$.
    \item The modules $R$, $R_f$ for $f\in R$ nonzero, and $H_I^i(R)$ for $i\in \N$ and $I$ an ideal of $R$, all have finite length as $D_R$-modules, and finitely many associated primes as $R$-modules.
    \item If $K$ has characteristic zero, then every nonzero element $f\in R$ admits a nonzero Bernstein-Sato functional equation.
\end{itemize}
    \end{theorem}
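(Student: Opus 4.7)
The plan is as follows. The first bullet is immediate from the Bavula theorem recalled just above the definition of a Bernstein algebra: by hypothesis $(D_R,\cBb)$ is linearly simple with $\Dim(\cBb)=2\dim(R)$, so for any nonzero compatible $(D_R,\cBb)$-module $(M,\cGb)$ we obtain at once $\Dim(\cGb)\geq \frac{1}{2}\Dim(\cBb)=\dim(R)$.

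For the second bullet, my plan is to exhibit, for each of $M=R$, $M=R_f$, and $M=H^i_I(R)$, a filtration $\cGb$ compatible with $\cBb$ such that $\Dim(\cGb)=\dim(R)$ and $\ee(\cGb)<\infty$. Granted such a filtration, any strictly ascending chain $0=M_0\subsetneq M_1\subsetneq\cdots\subsetneq M_k=M$ of $D_R$-submodules inherits compatible quotient filtrations on each $M_j/M_{j-1}$; by the first bullet each quotient has dimension exactly $\dim(R)$, and additivity of the leading-term multiplicity under short exact sequences bounds the length $k$ in terms of $\ee(\cGb)$. Finite length as a $D_R$-module in turn yields finitely many associated primes, since $\Gamma_\p$ preserves the $D_R$-structure and the associated primes of $M$ are accounted for by the factors of any composition series. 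For $R$ itself, the filtration $G^n=[R]_{\leq n}$ works by the Hilbert function growth of a graded algebra of dimension $\dim(R)$. For $R_f$, I would use a localized filtration $G^n=f^{-n}[R]_{\leq Cn}$, with $C$ chosen via Lemma~\ref{lem:loc-ber-const} to ensure compatibility with $\cBb$, and bound growth via the Hilbert function of $R$ in degree $\leq Cn$. For $H^i_I(R)$, I would induce $\cGb$ from the \v{C}ech complex on a set of generators of $I$, with each term carrying the filtration of the corresponding localization and cohomology inheriting the subquotient filtration.

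For the third bullet, I would run the classical Bernstein argument: after base change to $K(s)$, the $D_{R_f}\otimes_K K(s)$-module $R_f[s]\cdot f^s\otimes_{K[s]}K(s)$ is finitely generated and, by the first two bullets applied to the base-changed Bernstein algebra over $K(s)$ (using Lemma~\ref{Localizing Things}(1) and~(3) to identify the relevant rings of operators), has finite length. The ascending chain of submodules generated by the elements $f^{s-k}$ then stabilizes, producing an identity $\delta(s)\cdot f^{s+1}=b(s)f^s$ with $b(s)\in K[s]$ after clearing common powers of $f$ to return to $D_R$.

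The main obstacle I anticipate is constructing the filtration on $H^i_I(R)$ with provably finite multiplicity of the correct polynomial degree: tracking multiplicities through the \v{C}ech complex requires careful control of the localized filtrations under the Čech differentials and their passage to subquotients, leaning on both Lemma~\ref{lem:loc-ber-const} and the finiteness of $\ee(\cBb)$ from the Bernstein-algebra hypothesis. A secondary subtlety is the third bullet's base-change step, which requires that the purely transcendental extension $K\to K(s)$ preserves linear simplicity and the numerical invariants $\Dim$ and $\ee$; this is essentially formal but needs to be verified for the Bernstein filtration.
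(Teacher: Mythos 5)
This theorem is not proved in the paper you are reading: it is cited verbatim from \cite[Theorem~4.32, Corollary~4.13, Proposition~4.8]{AHJNTW2}, so there is no in-paper argument to compare against. Your plan is nonetheless a plausible reconstruction of what one expects in that reference, and the first bullet really is immediate from Bavula's theorem as you say.

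Two imprecisions are worth flagging in the second bullet. First, since $\ee$ is defined as a $\limsup$ rather than a limit, you cannot invoke raw ``additivity of leading-term multiplicity'' across short exact sequences; what you actually have from $\dim_K G^n = \dim_K G'^n + \dim_K G''^n$ is subadditivity of $\ee$, and the argument needs to be supplemented by a uniform positive lower bound on multiplicity for nonzero modules (the multiplicity half of Bavula's theorem, which the paper's restatement suppresses but which is present in the cited reference) to bound the length of a chain. Second, the argument that finite $D$-length forces finitely many associated primes is not simply ``read off a composition series''; a cleaner route (Lyubeznik's) is that distinct associated primes $\p_1,\p_2,\dots$ would produce a strictly increasing chain of $D_R$-submodules $\Gamma_{\p_1\cap\cdots\cap\p_n}(M)$, contradicting finite length. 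Also note that Lemma~\ref{lem:loc-ber-const} produces a filtration on $D_{R_f}$, not on the module $R_f$; you need to define $G^n = f^{-n}[R]_{\leq Cn}$ on $R_f$ directly and verify $B^iG^j\subseteq G^{i+j}$ using Lemma~\ref{lem44}, which is not literally the content of that lemma though the same computation underlies it.

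The third bullet and the base-change subtlety you flag are handled exactly as you anticipate; the identical $K\to K(s)$ base change appears in this paper's proof of Proposition~\ref{prop:fs-bimod-holo} and is, as you say, formal once one knows $D_{R(s)|K(s)}\cong D_{R|K}\otimes_K K(s)$ (Lemma~\ref{Localizing Things}(3)) and that $\Dim$, $\ee$, and linear simplicity behave well under faithfully flat field extension.
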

    
    \begin{remark}\label{rem:BIintro}
Let $\cBb$ be a Bernstein filtration. Set 
\[ \Dim(M) = \inf\{ \Dim(\cFb) \ | \ \cFb \ \text{is a filtration on $M$ compatible with $\cBb$}\}.\]
If $R$ is a Bernstein algebra, and $M$ is a nonzero $D_R$-module, we have $\Dim(M)\geq \dim(R)$. This is the precise version of Bernstein's inequality given by Theorem~\ref{thm1.1} in the intro.
\end{remark}

    We recap the most straightforward version of the connection between Bernstein's inequality and linear simplicity for easy reference.

    \begin{corollary}\label{cor:dir}
     Let $R$ be a finitely generated $\N$-graded algebra over a field $K$ over characteristic zero with ${R_0=K}$.

 If $R$ is a direct summand of a polynomial ring, and there is a Bernstein filtration $\cBb$ for $D_R$ that is linearly simple, then Bernstein's inequality holds in the sense that for any nonzero $D_R$-module $M$, the dimension of any $\cBb$-compatible filtration on $M$ is at least $\dim(R)$.
    \end{corollary}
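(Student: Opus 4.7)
The plan is to combine two of the results already collected just above the corollary, so the argument is essentially an application of the infrastructure that has been set up. The corollary packages together the hypotheses "linear simplicity of some Bernstein filtration" plus "direct summand of a polynomial ring" and asks for the Bernstein-inequality-style dimension estimate for arbitrary nonzero $(D_R,\cBb)$-modules.

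First I would invoke Proposition~\ref{prop:posdiff}. Its second statement directly applies: since $R$ is a direct summand of a polynomial ring over a field of characteristic zero, and $\cBb$ is a Bernstein filtration on $D_R$ that is linearly simple by hypothesis, it follows that $R$ is a Bernstein algebra. Implicitly one uses that the three defining conditions of a Bernstein algebra (linear simplicity, $\Dim(D_R)=2\dim(R)$, and $0<\e(D_R)<\infty$) are independent of the choice of Bernstein filtration, as noted immediately after the definition of a Bernstein algebra; this independence is justified by Lemma~\ref{lem:berfilt} together with \cite[Corollary~2.6]{AHJNTW2}, so the hypothesis "some Bernstein filtration is linearly simple" is equivalent to "every Bernstein filtration is linearly simple," and likewise for the finiteness of dimension and multiplicity.

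Next I would invoke the first bullet of the theorem stated after Proposition~\ref{prop:posdiff}: once $R$ is known to be a Bernstein algebra, every nonzero $(D_R,\cBb)$-module $(M,\cGb)$ satisfies $\Dim(\cGb)\geq \dim(R)$. Taking the infimum over all compatible filtrations $\cGb$ on a nonzero $D_R$-module $M$, this is exactly the dimension bound $\Dim(M)\geq \dim(R)$ in the sense described in Remark~\ref{rem:BIintro}, which is the statement of the corollary.

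There is no real obstacle here: the work has all been done in the cited results, and the only thing to check is the bookkeeping that "some Bernstein filtration" suffices for the hypotheses of Proposition~\ref{prop:posdiff} and that the dimension bound for arbitrary compatible filtrations passes to the module-theoretic $\Dim(M)$ of Remark~\ref{rem:BIintro}. Both of these are formalities once one has recorded the linear equivalence of all Bernstein filtrations (Lemma~\ref{lem:berfilt}) and the invariance of $\Dim$ under linear equivalence.
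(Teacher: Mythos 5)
Your proof is correct and is essentially the intended argument: apply the second statement of Proposition~\ref{prop:posdiff} to conclude $R$ is a Bernstein algebra, then invoke the first bullet of the subsequent theorem to get $\Dim(\cGb)\geq\dim(R)$ for every nonzero filtered $(D_R,\cBb)$-module. The paper states this corollary as a recap without a written proof, and your reasoning matches the intended chain of citations; the extra discussion about independence of the choice of Bernstein filtration and about passing to the infimum of Remark~\ref{rem:BIintro} is harmless but not needed, since the corollary's conclusion is literally the first bullet of that theorem.
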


   \section{Sandwich Bernstein-Sato polynomials}
   
   \subsection{Definition}
Let $R$ be $K$-algebra, where $K$ is a field of characteristic zero. For any $f\in D^0_{R|K}\cong R$, we say that 
 \begin{equation}\label{eq:SBSdef1} \sum_i \alpha_i(s)f^{s+1}\beta_i(s) = b(s) f^s \end{equation}
 is a  {\textit{Sandwich-Bernstein-Sato equation}} for $f$ (or  {\textit{SBS-equation}} for $f$) if $\alpha_i(s),{\beta_i(s)\in D_{R|K}[s]}$,  $b(s) \in K[s]$, and the equation
   \begin{equation}\label{eq:SBSdef2} \sum_i \alpha_i(n)f^{n+1}\beta_i(n) = b(n) f^n \end{equation}
   holds in $D_{R|K}$ for all $n\in \NN$.

   Note that in \eqref{eq:SBSdef2}, as this takes place in $D_{R|K}$, the multiplication is composition rather than the action of $D_{R|K}$ on $R$ used in the usual notion of Bernstein-Sato polynomial; see Remark~\ref{rem:standardBS} below.

 \begin{remark}
We will denote elements of $D_{R|K}[s]$ (and $D_{R_f|K}[s]$) either by a single letter like $\alpha$ or as an expression of $s$, e.g., $\alpha(s)$. In particular, we will use the notation $\alpha(s)$ when we also want to consider the evaluation $\alpha(t)\in D_{R|K}$.
 \end{remark}

\begin{proposition}\label{SBS polynomials form an ideal}
For a given $f\in D^0_{R|K}$, the polynomials $b(s)\in K[s]$ that satisfy a SBS-equation form an ideal.
\end{proposition}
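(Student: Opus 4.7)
\medskip

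\noindent\textbf{Proof proposal.} The plan is to check directly that the set
\[ I_f := \{\, b(s) \in K[s] \mid b(s) \text{ satisfies an SBS-equation for } f\,\} \]
is nonempty, closed under addition, and closed under multiplication by arbitrary elements of $K[s]$. Nonemptiness is immediate since the zero polynomial is witnessed by the trivial equation with empty sum; alternatively, $0 \cdot f^{s+1} \cdot 0 = 0 \cdot f^s$ at every $n \in \N$.

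For closure under addition, suppose $b_1(s), b_2(s) \in I_f$ with witnesses
\[ \sum_i \alpha_i^{(1)}(s) f^{s+1} \beta_i^{(1)}(s) = b_1(s) f^s \quad \text{and} \quad \sum_j \alpha_j^{(2)}(s) f^{s+1} \beta_j^{(2)}(s) = b_2(s) f^s, \]
each holding in $D_{R|K}$ after evaluating $s$ at any $n \in \N$. Concatenating the indexing sets and summing the two equations exhibits an SBS-equation for $b_1(s) + b_2(s)$.

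For closure under multiplication by $c(s) \in K[s]$, observe that $c(s)$ lies in the center of $D_{R|K}[s]$ since $K \subseteq D_{R|K}$ is central and $s$ is a formal variable. Thus for $b(s) \in I_f$ with witness $\sum_i \alpha_i(s) f^{s+1} \beta_i(s) = b(s) f^s$, the operators $\alpha'_i(s) := c(s)\alpha_i(s) \in D_{R|K}[s]$ satisfy, at each $n \in \N$,
\[ \sum_i \alpha'_i(n) f^{n+1} \beta_i(n) = c(n) \sum_i \alpha_i(n) f^{n+1} \beta_i(n) = c(n) b(n) f^n, \]
which is the required SBS-equation witnessing $c(s)b(s) \in I_f$.

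There is no significant obstacle here; the only point that merits a moment's attention is that the verification of an SBS-equation is at the level of evaluations $s = n \in \N$ in $D_{R|K}$ rather than as a formal identity in $D_{R|K}[s]$, so one must confirm both operations preserve this pointwise condition, which they clearly do.
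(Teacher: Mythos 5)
Your proof is correct and takes essentially the same route as the paper: verify closure under addition by concatenating the two witnessing equations, and closure under $K[s]$-scaling by absorbing $c(s)$ into the left-hand operators $\alpha_i(s)$. The only additions beyond the paper's argument (the explicit nonemptiness check, and the remark that the verification is pointwise at $s=n$) are harmless and fine to include.
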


\begin{proof}
Suppose $b(s)$ and $c(s)$ are polynomials satisfying two SBS-equations for some ${f\in D^0_{R|K}}$. Explicitly, say
    $$\sum_i \alpha_i(s)f^{s+1}\beta_i(s) = b(s) f^s,$$
    $$\sum_j \gamma_j(s)f^{s+1}\delta_j(s) = c(s) f^s,$$
for some $\alpha_i(s),\beta_i(s),\gamma_j(s),\delta_j(s) \in D_{R|K}[s]$. Then $b(s) + c(s)$ satisfies the SBS-equation
    $$\sum_i \alpha_i(s)f^{s+1}\beta_i(s) +  \sum_j \gamma_j(s)f^{s+1}\delta_j(s)= (b(s)+c(s)) f^s.$$
Also for any $k(s) \in K[s]$, we have $\sum_i (k(s)\alpha_i(s))f^{s+1}\beta_i(s) = k(s)b(s) f^s.$
\end{proof}
Note that the ideal above in Proposition~\ref{SBS polynomials form an ideal} is principal, which gives validity to the following definition.
\begin{definition}\label{def:sbspoly}
    For any $f\in D^0_{R|K}\cong R$, we say that the  {\textit{SBS-polynomial}} (or \textit{sandwich Bernstein-Sato polynomial}) for $f$ is the monic generator for the ideal of all polynomials satisfying an SBS equation for~$f$. Our convention is that when the ideal of all polynomials satisfying an SBS equation for~$f$ is the zero ideal, the SBS polynomial of $f$ is zero. We denote the SBS polynomial of $f$ by $\SBS{f}{R}(s)$.
\end{definition}

\begin{remark}\label{rem:standardBS}
We will also discuss the notion of \emph{Bernstein-Sato polynomial} of an element in a ring, which is classical for polynomial rings in characteristic zero, cf.~\cite{BernsteinPoly}, and has been discussed for singular rings in \cite{AMHNB, BJNB, AHJNTW1}. Namely, for an algebra $R$ over a field $K$ of characteristic zero and $f\in R$ a \emph{Bernstein-Sato functional equation} for $f$ is an equation of the form
\[ \delta(s) \cdot f^{s+1} = b(s) f^s\]
where $b(s)\in K[s]$ and $\delta(s)\in D_{R|K}[s]$; by such an equation, we mean that the corresponding equations in $R_f$ hold after evaluating $s$ at $n$ for all $n\in \Z$. The \emph{Bernstein-Sato polynomial} is the minimal monic $b(s)$ appearing in such an equation, if one exists.

We will use the nonstandard notation $\BS{f}{R}(s)$ for this Bernstein-Sato polynomial.
\end{remark}

   \subsection{The bimodule $D_{R_f}[s]\fs$}
   
  The sandwich-Bernstein-Sato functional equation can also be considered as a single equality in a suitable bimodule, which we proceed to construct.
   
   Let $K$ be a field of characteristic zero, and $R$ be a finitely generated $K$-algebra. The map
   \[ \Theta: D_{R|K} \to D_{R_f|K}[s]\]
   given by
   \[ \Theta(\delta) = \sum_{i=0}^{\mathrm{ord}(\delta)} (-1)^i \binom{s}{i}  [\delta]^i_f f^{-i}\]
   is a ring homomorphism by an argument nearly identical to \cite[Proof of Lemma~4.9]{AHJNTW2}.

   This extends to a ring endomorphism of $D_{R_f|K}[s]$ by fixing $s$ and $1/f$; we denote this extension by $\Theta$ as well.

   Define $D_{R_f|K}[s]\fs$ to be the $(D_{R|K}[s],D_{R|K}[s])$-bimodule that, as a set is simply
   \[ \{ \delta \fs \ | \ \delta\in D_{R_f|K}[s]\},\]
   as an abelian group has addition
   \[ \delta \fs + \delta' \fs = (\delta + \delta')\fs,\]
   and with action
   \[ \alpha \cdot \delta \fs \cdot \beta = (\alpha \delta \Theta(\beta))\fs.\]
  
  The point of this action is as follows.  For $n\in \ZZ$, let \[\pi_n: D_{R_f|K}[s] \to D_{R_f|K} \ \text{and} \  \widetilde{\pi}_n:D_{R_f|K}[s] \fs \to D_{R_f|K}\] be the ``evaluation at $s=n$'' maps; i.e., $\pi_n(\delta(s)) = \delta(n)$ and $\widetilde{\pi}_n(\delta(s) \fs) = \delta(n) f^n$. We have
   \[  f^n \delta  = \pi_n(\Theta(\delta)) f^n\]
   in $D_{R|K}$ for all $n\in \ZZ$ by Lemma~\ref{lem44}.  Then
  \begin{align*}
      \widetilde{\pi}_n ( \alpha \cdot \delta \fs \cdot \beta) &= \widetilde{\pi}_n (\alpha \delta \Theta(\beta) \fs) = \pi_n(\alpha \delta \Theta(\beta)) f^n \\
      &= \pi_n(\alpha) \pi_n(\delta) \pi_n(\Theta(\beta)) f^n = \pi_n(\alpha) \pi_n(\delta) f^n \pi_n(\beta)\\
      &= \pi_n(\alpha) \cdot \widetilde{\pi}_n(\delta \fs) \cdot \pi_n(\beta).
  \end{align*}

  In fact, any element of $D_{R|f}[s] \fs$ is determined by its images under the maps $\widetilde{\pi}_n$.

  \begin{lemma}\label{lem:pin-determine} Let $\delta \fs\in D_{R_f|K}[s]\fs$. The following are equivalent:
  \begin{enumerate}
      \item $\delta \fs = 0$ in $D_{R_f|K}[s]\fs$.
      \item $\delta(n) f^n = 0$ in $D_{R_f|K}$ for all $n\in \Z$.
          \item $\delta(n) f^n = 0$ in $D_{R_f|K}$ for infinitely many $n\in \Z$.
  \end{enumerate}
  If $f$ is a nonzerodivisor, then these are equivalent to
   \begin{enumerate}\setcounter{enumi}{3}
     \item $\delta(n) f^n = 0$ in $D_{R|K}$ for all $n\in \Z$.
       \item $\delta(n) f^n = 0$ in $D_{R|K}$ for infinitely many $n\in \N$.
       \end{enumerate}
  \end{lemma}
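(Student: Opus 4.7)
The plan is to leverage the fact that the formal symbol $\fs$ in the construction of $D_{R_f|K}[s]\fs$ makes $\delta\mapsto \delta\fs$ an abelian group isomorphism onto $D_{R_f|K}[s]$; hence $\delta\fs=0$ if and only if $\delta$ is the zero polynomial in $D_{R_f|K}[s]$. With this reduction in hand, the implications (1)$\Rightarrow$(2)$\Rightarrow$(3) are immediate, since evaluating the zero polynomial at $s=n$ yields $\delta(n)=0$ and hence $\delta(n)f^n=0$ for every $n\in\Z$.

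The content of the first equivalence lies in (3)$\Rightarrow$(1), which I would prove by a Vandermonde argument. First, because $f$ is a unit in $R_f$, the element $f^n$ is invertible in $D_{R_f|K}$ for every integer $n$, so $\delta(n)f^n=0$ is equivalent to $\delta(n)=0$. Writing $\delta(s)=\sum_{i=0}^{d}\delta_i s^i$ with coefficients $\delta_i$ in the $K$-vector space $D_{R_f|K}$, and choosing $d+1$ distinct integers $n_0,\dots,n_d$ at which $\delta$ vanishes, the resulting linear system for $\delta_0,\dots,\delta_d$ has a Vandermonde coefficient matrix that is invertible over $K$. This forces each $\delta_i=0$, hence $\delta=0$ in $D_{R_f|K}[s]$ and $\delta\fs=0$.

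To incorporate the nonzerodivisor hypothesis for the last two clauses, I would first verify that the canonical map $D_{R|K}\hookrightarrow D_{R_f|K}$ is injective. By Lemma~\ref{Localizing Things}(1) this map realizes $D_{R_f|K}$ as the localization of $D_{R|K}$ at the powers of $f$, so injectivity reduces to $f$ being a nonzerodivisor on $D_{R|K}$: if $f\delta=0$ in $D_{R|K}$, then $f\cdot\delta(r)=(f\delta)(r)=0$ for every $r\in R$, and since $f$ is a nonzerodivisor on $R$ we conclude $\delta(r)=0$ for all $r$, whence $\delta=0$. With this injection established, the equations $\delta(n)f^n=0$ interpreted in $D_{R|K}$ versus $D_{R_f|K}$ coincide, and the equivalences (2)$\Leftrightarrow$(4) and (3)$\Leftrightarrow$(5) follow; for the latter, restricting to $n\in\N$ still supplies infinitely many integer points, which is exactly what the Vandermonde step requires.

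No step presents a real obstacle. The only subtlety is that the coefficients of $\delta(s)$ live in a noncommutative ring, but this is benign because the Vandermonde matrix has entries in $K$ and thus its inverse acts $K$-linearly on the column vector of coefficients, allowing the standard argument to go through without modification.
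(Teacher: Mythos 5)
Your proof is correct and follows the same overall structure as the paper's: the cycle (1)$\Rightarrow$(2)$\Rightarrow$(3)$\Rightarrow$(1), the observation that $f^n$ is a unit in $D_{R_f|K}$ so that $\delta(n)f^n=0$ iff $\delta(n)=0$, and the injectivity of $D_{R|K}\to D_{R_f|K}$ for the last two clauses. The only genuine difference is in the core step (3)$\Rightarrow$(1): the paper expands $\delta(s)$ in a $K$-basis of $D_{R_f|K}$ so that the coefficient of each basis vector is a polynomial in $K[s]$ vanishing at infinitely many integers, hence zero; you instead write $\delta(s)=\sum_i \delta_i s^i$ with $\delta_i \in D_{R_f|K}$ and invert a Vandermonde matrix over $K$. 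These are essentially dual packagings of the same finite-dimensional linear algebra and buy the same thing, though yours avoids choosing a basis for the (typically infinite-dimensional) $K$-vector space $D_{R_f|K}$. You are also slightly more explicit than the paper in justifying that $f$ is a nonzerodivisor on $D_{R|K}$ (the paper asserts $f^m\delta\neq 0$ without proof; you derive it by evaluating at ring elements, using that $f$ is a nonzerodivisor on $R$), which is a small but welcome addition.
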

  \begin{proof} Clearly (1) $\Rightarrow$ (2) $\Rightarrow$ (3). To see (3) $\Rightarrow$ (1), note that $\widetilde{\pi}_n(\delta(s) \fs) = \delta(n) f^n$ is zero in $D_{R_f|K}$ if and only if $\delta(n)$ is zero. Take a $K$-vectorspace basis $\Lambda$ for $D_{R_f|K}$, and write $\delta(s)=\sum_{i,j} k_{i,j} \lambda_{j} s^i$ with $k_{i,j}\in K$, $\lambda_{i,j}\in \Lambda$. Then \[\delta(n) = \sum_{i,j} n^i k_{i,j} \lambda_{j} = \sum_j \left(\sum_i n^i k_{i,j}\right) \lambda_j.\] If $\delta(n)=0$ for all $n\in \N$, then $\sum_{i,j} n^i k_{i,j} \lambda_{j}=0$ for all $n\in \N$, which implies $\sum_i n^i k_{i,j}=0$ for all $j$. Then $\sum_i k_{i,j} s^i=0$ for all $j$, and hence $\delta(s)=0$.

It is also clear that  (4) $\Rightarrow$ (5)  $\Rightarrow$ (3). To see (2) $\Rightarrow$ (4), note that if $f$ is a nonzerodivisor and then $\delta\in D_{R|K}$ is nonzero, then $f^m \delta\neq 0$ for all $m$, and hence $\delta$ is nonzero in ${D_{R_f|K}\cong (D_{R|K})_f}$.
  \end{proof}
  
We will focus our interest on nonzerodivisors in light of the previous proposition.   In particular, we have the following.
  
  \begin{proposition}\label{prop:formalSBS}
  Let $K$ be a field of characteristic zero, and $R$ be a finitely generated $K$-algebra. Let $f\in R$ be a nonzerodivisor and $\alpha_1(s),\dots,\alpha_t(s), \beta_1(s),\dots,\beta_t(s)\in D_{R_f|K}[s]$. Let $b(s)\in K[s]$. Then
  \[ \sum_i \alpha_i(n) \,f^{n+1} \, \beta_i(n) = b(s) f^{n}\] in $D_{R|K}$ for all $n\in \N$ if and only if
\[\tag{$\dagger$} \sum_i \alpha_i \cdot f \fs \cdot \beta_i = b(s) \fs\] in the bimodule $D_{R_f|K}[s]\fs$. In particular, the sandwich Bernstein-Sato polynomial for $f$ is the monic generator of the ideal of polynomials $b(s)$ that appear in an equation of the form~($\dagger$).
  \end{proposition}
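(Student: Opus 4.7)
The plan is to convert the bimodule equation $(\dagger)$ into the family of evaluation equations (and vice versa) by applying the evaluation maps $\widetilde{\pi}_n: D_{R_f|K}[s]\fs \to D_{R_f|K}$, using the compatibility
\[ \widetilde{\pi}_n(\alpha \cdot \delta\fs \cdot \beta) = \pi_n(\alpha)\cdot \widetilde{\pi}_n(\delta\fs)\cdot \pi_n(\beta) \]
derived immediately before Lemma~\ref{lem:pin-determine}, together with the faithfulness criterion of Lemma~\ref{lem:pin-determine} itself.

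For the forward direction, I would apply $\widetilde{\pi}_n$ termwise to $(\dagger)$. Since $\widetilde{\pi}_n(f\fs) = f\cdot f^n = f^{n+1}$, the compatibility above turns the left-hand side into $\sum_i \alpha_i(n)\,f^{n+1}\,\beta_i(n)$ and the right-hand side into $b(n)f^n$, giving the stated evaluation identity as an equality in $D_{R_f|K}$. Because $f$ is a nonzerodivisor, the localization map $D_{R|K} \hookrightarrow D_{R_f|K}$ is injective, so whenever both sides of this identity lie in $D_{R|K}$ (as they do here) the identity already holds in $D_{R|K}$.

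For the converse direction, I would use the definition of the bimodule action to consolidate the left side of $(\dagger)$ into a single element of the form $\delta(s)\fs$: explicitly,
\[ \sum_i \alpha_i \cdot f\fs \cdot \beta_i - b(s)\fs \;=\; \Bigl(\sum_i \alpha_i(s)\,f\,\Theta(\beta_i(s)) - b(s)\Bigr)\fs. \]
By the compatibility formula, $\widetilde{\pi}_n$ sends this element to $\sum_i \alpha_i(n)\,f^{n+1}\,\beta_i(n) - b(n)f^n$, which the hypothesis makes zero in $D_{R|K}$ for every $n\in\N$. This is precisely condition~(5) of Lemma~\ref{lem:pin-determine}; since $f$ is a nonzerodivisor, that lemma yields the vanishing of the displayed bimodule element, which is~$(\dagger)$.

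The final assertion then follows formally: the equivalence just established shows that the set of $b(s)\in K[s]$ appearing in an equation of the form $(\dagger)$ coincides with the set of $b(s)$ appearing in a sandwich Bernstein-Sato equation in the sense of Definition~\ref{def:sbspoly}; by Proposition~\ref{SBS polynomials form an ideal} this set is an ideal, and its monic generator is $\SBS{f}{R}(s)$ by definition. The only real ingredient is the faithfulness statement $\{\widetilde{\pi}_n\}_{n\in\N}$ supplied by Lemma~\ref{lem:pin-determine}; the rest amounts to careful tracking of the bimodule action and of where the two sides of each identity live, so I do not foresee a substantive technical obstacle.
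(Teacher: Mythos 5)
Your argument is correct and is essentially the same as the paper's (very terse) proof: the ``if'' direction comes from the compatibility $\widetilde{\pi}_n(\alpha\cdot\delta\fs\cdot\beta)=\pi_n(\alpha)\widetilde{\pi}_n(\delta\fs)\pi_n(\beta)$ established in the discussion immediately preceding Lemma~\ref{lem:pin-determine}, and the ``only if'' direction is exactly an application of the faithfulness criterion in that lemma. One very minor point of phrasing: in the forward direction, the left-hand side a priori lives in $D_{R_f|K}$ rather than $D_{R|K}$ (since the $\alpha_i,\beta_i$ are taken from $D_{R_f|K}[s]$); it is the equality with $b(n)f^n\in D_{R|K}$, combined with the injectivity of $D_{R|K}\hookrightarrow D_{R_f|K}$, that places the left side in $D_{R|K}$ --- so ``as they do here'' is a consequence of the identity rather than an independent observation, but this does not affect the validity of the argument.
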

  \begin{proof}
  The ``if'' direction follows from the discussion above and the ``only if'' direction from Lemma~\ref{lem:pin-determine}.
  \end{proof}

Applying $K(s)\otimes_{K[s]} -$, one obtains the $D_{R(s)|K(s)}\cong D_{R|K}(s)$ bimodule $D_{R_f|K}(s) \fs$. Since $D_{R_f|K}[s]\fs$ is flat over $K[s]$, the analogue of Proposition~\ref{prop:formalSBS} holds in $D_{R_f|K}(s) \fs$ as well. 

\begin{remark}
    We observe that the $(D_{R|K}[s],D_{R|K}[s])$-bimodule structure on $D_{R_f}[s]\fs$ is $K[s]$-balanced: i.e., for any $a(s)\in K[s]$ and any $\delta \fs \in D_{R_f}[s]\fs$ we have $a(s) \delta \fs = \delta \fs a(s)$. Indeed, this follows from Lemma~\ref{lem:pin-determine} and the equality $a(t) \delta f^t = \delta f^t a(t) \in D_{R_f|K}$.

    In particular, the bimodule structure is equivalent to the left-module structure over
    \[ D_{R|K}[s] \otimes_{K[s]} D_{R|K}[s]^\op \cong (D_{R|K} \otimes_{K} D_{R|K})^\op [s].\]
\end{remark}

We note that sandwich Bernstein-Sato polynomials in finitely generated $K$-algebras are invariant under extension of the base field.

\begin{lemma} Let $K\subseteq L$ be fields of characteristic zero.
    Let $R$ be a finitely generated $K$-algebra and $f\in R$ be a nonzerodivisor. Then $\SBS{f}{R}(s) = \SBS{f}{L \otimes_K R}(s)$.
\end{lemma}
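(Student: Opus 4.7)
The plan is to recast both sandwich Bernstein--Sato polynomials as annihilators of a common element in a quotient bimodule and then invoke flatness of $L/K$. By Proposition~\ref{prop:formalSBS}, $\SBS{f}{R}(s)$ is the monic generator of the ideal
\[ J_K = \{b(s) \in K[s] \mid b(s)\fs \in N_K\} \subseteq K[s],\]
where $N_K \subseteq M_K := D_{R_f|K}[s]\fs$ is the $K[s]$-submodule spanned by sandwiches of the form $\alpha \cdot f\fs \cdot \beta$ with $\alpha, \beta \in D_{R|K}[s]$. Equivalently, $J_K$ is the annihilator in $K[s]$ of the image of $\fs$ in the quotient $M_K/N_K$. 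Since flatness of $L$ over $K$ preserves the property of being a nonzerodivisor, $f$ remains a nonzerodivisor in $L \otimes_K R$, so Proposition~\ref{prop:formalSBS} applies to the base change as well and yields the analogous description of $J_L \subseteq L[s]$ and $\SBS{f}{L\otimes_K R}(s)$.

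Next, I will show that the whole bimodule picture is compatible with $L \otimes_K -$. Applying Lemma~\ref{Localizing Things}(3) to the finitely generated $K$-algebras $R$ and $R_f$ gives $L \otimes_K D_{R|K} \cong D_{L\otimes_K R|L}$ and $L \otimes_K D_{R_f|K} \cong D_{(L\otimes_K R)_f|L}$. The ring endomorphism $\Theta$ that controls the right action on $D_{R_f|K}[s]\fs$ is defined by iterated commutator brackets and binomial coefficients, both of which are natural under base change; thus $L \otimes_K M_K \cong M_L$ as bimodules. Since $N_K$ is the image of the $K[s]$-linear map $D_{R|K}[s] \otimes_K D_{R|K}[s] \to M_K$, $\alpha \otimes \beta \mapsto \alpha \cdot f\fs \cdot \beta$, flatness identifies $L \otimes_K N_K$ with $N_L$, and consequently $L \otimes_K (M_K/N_K) \cong M_L/N_L$.

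Finally, I will apply the standard flat base change for annihilators: for any flat ring homomorphism $A \to B$, any $A$-module $P$, and any $m \in P$, tensoring the short exact sequence $0 \to \mathrm{ann}_A(m) \to A \to Am \to 0$ with $B$ gives $\mathrm{ann}_B(1 \otimes m) = B \cdot \mathrm{ann}_A(m)$. Taking $A = K[s]$, $B = L[s]$ (flat because $L/K$ is), $P = M_K/N_K$, and $m$ the image of $\fs$, this yields $J_L = L[s] \cdot J_K$. Both ideals are principal; if both are zero we are done, and otherwise each has a unique monic generator, so $(b_L) = L[s]\cdot(b_K)$ forces $\SBS{f}{L\otimes_K R}(s) = \SBS{f}{R}(s)$.

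There is no substantial obstacle. The only genuine verification required is the compatibility of the endomorphism $\Theta$ and the resulting bimodule structure on $M_K$ with the base change $L \otimes_K -$; this is immediate from the explicit formula for $\Theta$ and the fact that $L \otimes_K [\cdot,\cdot] = [L \otimes_K \cdot, L \otimes_K \cdot]$. Everything else is formal commutative algebra from the flatness of $L$ over $K$.
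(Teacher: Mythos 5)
Your proof is correct, and it takes a genuinely different route from the paper's. The paper argues directly with SBS equations: given an equation over $L\otimes_K R$, it decomposes each $\alpha_i(s), \beta_i(s)$ and $b(s)$ with respect to a finite $K$-linearly independent family of elements of $L$, collects terms, and uses the linear independence to split the single equation into a family of SBS equations over $R$ (one for each basis element), from which $b(s)\in\SBS{f}{R}(s)\cdot L[s]$ is read off; the converse divisibility is handled separately by the obvious base change of an SBS equation. Your argument instead realizes $\SBS{f}{R}(s)$ as the monic generator of the $K[s]$-annihilator of the cyclic class $\overline{\fs}$ in $M_K/N_K$, verifies that $M_K$, $N_K$, and the bimodule structure (in particular the right action via $\Theta$) all commute with $L\otimes_K -$ using Lemma~\ref{Localizing Things}(3), and then concludes $J_L = L[s]\cdot J_K$ by flat base change of annihilators, which pins down both divisibilities at once. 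The trade-off: your approach requires the compatibility-of-$\Theta$ check (correctly reduced to the explicit iterated-commutator formula plus the fact that Lemma~\ref{Localizing Things}(3) preserves order), but in exchange it avoids any explicit basis manipulation and packages both directions into a single flatness statement. Both are valid; yours is cleaner conceptually, the paper's is more self-contained and hands-on.
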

\begin{proof}
By Lemma~\ref{Localizing Things}(3) we have $D^i_{L\otimes_K R|L}\cong L\otimes_K D^i_{R|K}$ for all $i$. Given an SBS equation of $f$ over $R$,
\[ \sum_i \alpha_i(s) f^{s+1} \beta_i(s) = b(s) f^s,\]
we can identify $\alpha_i(s)$, $\beta_i(s)$ with operators in on $L\otimes_K R$ and use the same equation there. This shows that
$\SBS{f}{L \otimes_K R}(s)$ divides  $\SBS{f}{R}(s)$.

On the other hand, given an SBS-equation of $f$ over $L\otimes_K R$,
\[ \sum_i \alpha_i(s) f^{s+1} \beta_i(s) = b(s) f^s,\]
we can write $\alpha_i(s) = \sum \ell_{i,j} \otimes \alpha'_{i,j}(s)$ with $\ell_{i,j}\in L$, $\alpha_{i,j}(s)\in D_{R|K}[s]$, and $b(s)=\sum_j \ell_j b_j(s)$ with $\ell_j\in L$ and $b_j(s)\in K[s]$. By taking a $K$-vectorspace basis for $K\cdot\{ \ell_{i,j}, \ell_j\}$ and collecting terms, we can rewrite our SBS-equation as an equation of the form
\[ \sum_k \ell_k \left(\sum_i \alpha''_{i,k}(s) f^{s+1} \beta''_{i,k}(s) \right) = \sum_k \ell_k b'_k(s).\]
By linear independence of $\ell_k$, this equation holds for $s=t\in \N$ if and only if 
\[\sum_i \alpha''_{i,k}(t) f^{t+1} \beta''_{i,k}(t) = b'_k(t) f^t\] holds for all $k$.
In particular, for each $k$,
\[\sum_i \alpha''_{i,k}(s) f^{s+1} \beta''_{i,k}(t) = b'_k(s) f^s\] is an SBS-equation for $f$ in $R$. Then $b(s) = \sum_k \ell_k b'_k(s) \in \SBS{f}{R}(s)\cdot L[s]$. We conclude that $\SBS{f}{L \otimes_K R}(s) = \SBS{f}{R}(s)$.
\end{proof}

\section{Bernstein inequality for bimodules and existence of sandwich Bernstein-Sato polynomials}

In this section, we will establish a version of Bernstein's inequality for bimodules and apply this to obtain sufficient conditions for existence of sandwich Bernstein-Sato polynomials. We will require a strengthening of the linear simplicity condition for a filtered algebra. In particular, it turns out that linear simplicity of the tensor product $D_{R|K} \otimes_K D_{R|K}^{\op}$ will be the appropriate condition.

We start by investigating this property.

\begin{lemma}
Let $K$ be a field, and $R$ be a $K$-algebra. Let $\cF^{\bullet}$ be a filtration on $D_{R|K}$. If there is a constant $C$ such that for any finite set of $K$-linearly independent elements $\delta_1,\dots,\delta_n\in F^t$, there is some $A= \sum_j \alpha_j \otimes \beta_j \in F^{Ct}\otimes F^{Ct}$  such that 
\begin{itemize}
    \item $\sum_j \alpha_j \delta_i \beta_j\in K$ for all $i$, and
\item $\sum_j \alpha_j \delta_i \beta_j\neq 0$ for some $i$,
\end{itemize} then $(D_{R|K} \otimes_K D_{R|K}^{\op}, \cFb_{\boxtimes})$ is linearly simple.
\end{lemma}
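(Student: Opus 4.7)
The plan is a two-step reduction that applies the hypothesis twice: first to contract any nonzero $r$ on its left tensor factor down to an element of the form $1\otimes \epsilon'$, and then to contract that further to a scalar multiple of $1\otimes 1$.

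Given a nonzero $r\in F^t_{\boxtimes}=F^t\otimes_K F^t$, I would first write $r=\sum_{i=1}^n \delta_i\otimes \epsilon_i$ as a minimal-rank expression with $\delta_i,\epsilon_i\in F^t$, so that both $\{\delta_i\}$ and $\{\epsilon_i\}$ are $K$-linearly independent subsets of $F^t$. This uses only the standard fact that every tensor admits a decomposition of length equal to its tensor rank, and that in such a decomposition both factor systems are automatically linearly independent. Applying the hypothesis to $\delta_1,\dots,\delta_n\in F^t$ yields $A=\sum_j \alpha_j\otimes \beta_j$ with $\alpha_j,\beta_j\in F^{Ct}$ and $\sum_j \alpha_j\delta_i\beta_j = k_i\in K$ for every $i$, with $k_{i_0}\neq 0$ for at least one index $i_0$.

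Using the product rule $(a\otimes b)(c\otimes d)=ac\otimes db$ on $D_{R|K}\otimes_K D_{R|K}^{\op}$, I would compute the key intermediate identity
\[
\sum_j (\alpha_j\otimes 1)\cdot r\cdot (\beta_j\otimes 1)
= \sum_i \Bigl(\sum_j \alpha_j\delta_i\beta_j\Bigr)\otimes \epsilon_i
= 1\otimes \epsilon',
\qquad \epsilon':=\sum_i k_i\epsilon_i.
\]
The linear independence of $\{\epsilon_i\}$ together with $k_{i_0}\neq 0$ forces $\epsilon'$ to be a nonzero element of $F^t$. For the second step, I would apply the hypothesis to the singleton $\{\epsilon'\}\subseteq F^t$ to obtain $\gamma_l,\delta_l\in F^{Ct}$ with $\sum_l \gamma_l\epsilon'\delta_l = k\in K\smallsetminus\{0\}$. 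The parallel computation
\[
\sum_l (1\otimes \delta_l)(1\otimes \epsilon')(1\otimes \gamma_l)
= 1\otimes \sum_l \gamma_l\epsilon'\delta_l
= k\,(1\otimes 1)
\]
combines with the previous identity to yield
\[
1\otimes 1 \;=\; \frac{1}{k}\sum_{j,l}(\alpha_j\otimes \delta_l)\cdot r\cdot (\beta_j\otimes \gamma_l),
\]
where $\alpha_j\otimes \delta_l,\ \beta_j\otimes \gamma_l \in F^{Ct}\otimes_K F^{Ct}=F^{Ct}_{\boxtimes}$. This exhibits $1\otimes 1\in F^{Ct}_{\boxtimes}\cdot r\cdot F^{Ct}_{\boxtimes}$ and establishes linear simplicity of $(D_{R|K}\otimes_K D_{R|K}^{\op},\cFb_{\boxtimes})$ with the same constant $C$ as in the hypothesis.

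The only genuine subtlety is the opening move of arranging a decomposition in which both factor systems are linearly independent; without that, $\epsilon'$ could vanish even when the $k_i$ do not all vanish, and the reduction would break down. Once that rank-minimal form is in place, everything else is routine bookkeeping around the noncommutative product on $D_{R|K}\otimes_K D_{R|K}^{\op}$.
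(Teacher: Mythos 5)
Your proof is correct and follows essentially the same two-step strategy as the paper: contract the left tensor factor to scalars via the hypothesis applied to the (linearly independent) left family, then contract the resulting $1\otimes\epsilon'$ via the hypothesis applied to the singleton $\{\epsilon'\}$. The only difference is bookkeeping of the constant: you correctly observe that $\epsilon'=\sum_i k_i\epsilon_i$ lies in $F^t$ (a $K$-linear combination of elements of $F^t$), whereas the paper conservatively places the corresponding element in $F^{(2C+1)t}$ before invoking the hypothesis, yielding a larger but equally serviceable constant.
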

\begin{proof}
Let $D=\sum_i \delta_i \otimes \delta'_i \in F^t_{\boxtimes}= F^t\otimes F^t$ be nonzero; without loss of generality we can assume that $\{ \delta_i\}$ and $\{ \delta'_i\}$ are linearly independent. Take $A$ as in the statement for $\{\delta_i\}$. Then
\[
\sum_j (\alpha_j \otimes 1) D (\beta_j \otimes 1) = \sum_i \sum_j \alpha_j \delta_i \beta_j \otimes \delta'_i = \sum_i \lambda_i \otimes \delta'_i = 1 \otimes (\sum_i \lambda_i \delta'_i)
\]
for some constants $\lambda_i\in K$, not all zero. Since $\{\delta'_i\}$ are $K$-linearly independent, we have ${\delta'_0:=\sum_i \lambda_i \delta'_i\neq 0}$
 applying the hypothesis to the singleton $\{\delta'_0\}$ in $F^{Ct+t+Ct} = F^{(2C+1)t}$, there is some ${A' = \sum_k \alpha'_k \otimes \beta'_k \in F^{C(2C+1)t}\otimes F^{C(2C+1)t}}$ such that $\sum_k \alpha'_k \delta'_0 \beta'_k=1$. Then 
\[
\sum_k (1 \otimes \beta'_k) (1\otimes \delta'_0) (1 \otimes \alpha'_k) = 1\otimes (\sum_k \alpha'_k \delta'_0\beta'_k) = 1\otimes 1.\]
Combining $A$ and $A'$, we get that 
\[ \sum_{j,k} (\alpha_j \otimes 1)(1 \otimes \beta'_k) D (\beta_j \otimes 1)(1 \otimes \alpha'_k) = 1\otimes 1,\]
with $(\alpha_j \otimes 1)(1 \otimes \beta'_k)$ and $(\beta_j \otimes 1)(1 \otimes \alpha'_k)$ in $F^{(C+1)(2C+1)t}_{\boxtimes}$. Thus, $(D_{R|K} \otimes_K D_{R|K}^{\op}, \cFb_{\boxtimes})$ is linearly simple.
\end{proof}

For a particular class of $K$-algebras, we can verify the hypotheses above.

\begin{proposition}\label{prop:toricdouble}
Let $K$ be a field of characteristic zero. Let $R$ be a finitely generated $K$-subalgebra of a polynomial ring that is generated by monomials. Fix a Bernstein filtration $\cBb$ for $D_{R|K}$. If $(D_{R|K},\cBb)$ is linearly simple, then $(D_{R|K} \otimes_K D_{R|K}^{\op}, \cBb_{\boxtimes})$ is linearly simple. 
\end{proposition}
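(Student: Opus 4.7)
The plan is to verify the sufficient criterion from the preceding lemma: given a $K$-linearly independent family $\delta_1, \ldots, \delta_k \in B^t$, produce $A = \sum_j \alpha_j \otimes \beta_j \in B^{Ct} \otimes B^{Ct}$ with $\sum_j \alpha_j \delta_i \beta_j \in K$ for every $i$ and nonzero for some $i$, with $C$ depending only on $R$. The monomial hypothesis enters crucially through two structural features. First, $D_R$ inherits a $\Z^n$-multigrading from the ambient polynomial ring $S = K[x_1, \ldots, x_n]$, compatible with $\cBb$, and for $\delta \in B^t$ only multidegrees $v$ with $\|v\|_\infty = O(t)$ appear in the decomposition $\delta = \sum_u \delta^{(u)}$. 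Second, every Euler operator $E_j = x_j \partial_{x_j}$ lies in $D_R$ because it acts as a scalar on every monomial; the $E_j$'s commute, and $[E_j, \gamma^{(v)}] = v_j \gamma^{(v)}$ on multigraded pieces.

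The first step is to decompose $\delta_i = \sum_u \delta_i^{(u)}$ and pick $(i_0, u_0)$ with $\delta_{i_0}^{(u_0)} \neq 0$. Since the joint support of the $\delta_i$ consists of finitely many lattice points bounded coordinatewise by $O(t)$, tensor-product Lagrange interpolation (with $n$ fixed) produces a polynomial $P(T_1, \ldots, T_n)$ of total degree $O(t)$ with $P(u_0) = 1$ and $P(w) = 0$ at all other $w$ in the support. Realize $P(\mathrm{ad}(E_1), \ldots, \mathrm{ad}(E_n))$ as iterated commutators with $E_j$'s: this expands into a sandwich expression $\sum \alpha_j^{(1)} \otimes \beta_j^{(1)}$ whose tensor factors lie in $B^{O(t)}$, and whose action on $\delta_i$ is $\delta_i^{(u_0)}$ for each $i$.

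After this reduction we may assume $\delta_i \in (D_R)_{u_0} \cap B^t$ with $\delta_{i_0} \neq 0$. Next I would exploit the structure of $(D_R)_{u_0}$ as a module over the commutative subring $K[E_1, \ldots, E_n] \subseteq (D_R)_0$: left-multiply by a judiciously chosen element $\alpha \in (D_R)_{-u_0}$, built as a monomial product of $x_j$'s and $\partial_{x_j}$'s whose combined effect lies in $D_R$, with $\alpha \delta_{i_0} \neq 0$. This lands each $\alpha \delta_i$ in $K[E_1, \ldots, E_n] \cap B^{O(t)}$ as a polynomial $r_i(E)$ with $r_{i_0} \neq 0$. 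Finally, invoke the hypothesized linear simplicity of $(D_R, \cBb)$: it yields a sandwich in $B^{O(t)}_{\boxtimes}$ sending the nonzero element $r_{i_0}$ to $1 \in K$. Composing the three sandwiches (which combines their constants multiplicatively), the total lies in $B^{Ct}_{\boxtimes}$.

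The main obstacle is producing a single sandwich in the last step that \emph{simultaneously} sends every $r_i(E)$ into $K$, not merely $r_{i_0}$ to $1$: linear simplicity of $D_R$ applied individually to $r_{i_0}$ gives no control over the images of the other $r_i$. The resolution likely exploits that each $r_i(E) \in K[E_1,\ldots,E_n]$ is a polynomial in commuting operators whose joint spectrum on the standard module $R$ is the lattice of multidegrees, so an evaluation-style functional $p(E) \mapsto p(c)$ can be realized by a sandwich of bounded Bernstein filtration — pairing $x^a$ on one side against $\partial^a$ on the other for suitable $a$ produces the scalar $p(a) \cdot a!$ up to a correction in $K[E]$, and a final application of linear simplicity on the resulting scalar polynomial of degree $O(t)$ completes the argument with bounds linear in $t$.
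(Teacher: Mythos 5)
Your high-level strategy matches the paper's: verify the sufficient criterion from the preceding lemma by constructing an explicit sandwich, exploiting the fine $\Z^n$-multigrading inherited from the ambient polynomial ring and the fact that the Euler operators $E_j = x_j\partial_{x_j}$ lie in $D_R$. The Lagrange-interpolation-in-$\mathrm{ad}(E_j)$ idea is genuinely the right tool and appears in the paper as well. However, the obstacle you flag at the end is a real gap, not a technicality you are entitled to wave at: linear simplicity of $(D_R,\cBb)$ gives you a sandwich that turns a \emph{single} nonzero element into $1$, with no control over what it does to the other $r_i$'s, and your proposal never reduces the ambient target to a one-dimensional space. Your sketched fix does not work as stated: conjugating a polynomial in the $E_j$'s by $x^c$ on one side and $\partial^c$ on the other produces (after commuting) another polynomial in the $E_j$'s, not a scalar, so this does not realize ``evaluation at $c$'' by a bounded sandwich. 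There is also an unjustified existence claim in your Step~2, namely that one can find $\alpha\in (D_R)_{-u_0}$ (a monomial in $x$'s and $\partial$'s that actually preserves $R$) with $\alpha\delta_{i_0}\neq 0$.

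The paper's argument avoids both issues by reordering the reduction. First, apply a \emph{common} sequence of iterated commutators with a generating set of $R$ to bring all the $\delta_i$'s into $D^0_R\cap B^{O(t)}$ at once (take the $\delta_i$ of maximal order, choose the bracket sequence of Lemma~\ref{lem:bracket-order}(2) for it; every other $\delta_j$ then drops to order $\leq 0$ under the same brackets). Now the $r_i$ are honest polynomials. Second, choose a monomial $x^\alpha$ maximal under divisibility in the union of the monomial supports of $r_1,\dots,r_n$; the Euler-operator interpolation $\prod_i\prod_{k=0}^{\alpha_i-1}\bigl(\mathrm{ad}(E_i)-k\bigr)$, realized as a sandwich in $B^{O(t)}_{\boxtimes}$, kills every monomial $x^\beta$ with some $\beta_i<\alpha_i$ and fixes $x^\alpha$ up to a nonzero scalar; maximality of $\alpha$ ensures this kills every other monomial in the supports. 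After this step each $r_i$ has been replaced by $\lambda_i x^\alpha$ with $\lambda_i\in K$, not all zero — a \emph{one-dimensional} space. Now a \emph{single} application of linear simplicity of $(D_R,\cBb)$ to the element $x^\alpha\in B^{O(t)}\smallsetminus 0$ sends $\lambda_i x^\alpha\mapsto\lambda_i\in K$ for every $i$ simultaneously, by $K$-linearity. That reduction to a one-dimensional target is precisely the ingredient your proof is missing.
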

\begin{proof}
We will show that the condition of the previous lemma holds. Suppose that $D_{R|K}$ is linearly simple with constant $C'$. Assume that $R$ is generated in degree at most $w$ as a $K$-algebra, and let $a$ be such that $D_R^n\subseteq F^{an}$ for all $n$.

Let $\delta_1,\dots,\delta_n \in F^t$ be linearly independent.  Then by applying at most $at$ iterated brackets with generators of $R$, which corresponds the action of an element $A\in B^{awt}_{\boxtimes}$,  each $\delta_i$ becomes an element $r_i$ of $D^0_R \cap B^{(aw+1)t}$, not all zero. Let $x^{\alpha}= x_1^{\alpha_1}\cdots x_d^{\alpha_n}$ be a monomial that is maximal under divisibility in the union of the monomial supports of $r_1,\dots,r_n$. Observe that $\alpha_1+\cdots +\alpha_d \leq (aw+1)t$, since $D^0_R \cap B^{(aw+1)t}=[R]_{\leq (aw+1)t}$.

For each variable $x_i$ in the ambient polynomial ring $K[x_1,\dots,x_d]$, there is an Euler operator $E_i \colonequals x_i \partial_{x_i}$ that is well-defined on~$R$.

 Then, for a monomial $x^\beta$, we have that
\[\tag{$\dagger$} \left[E_1,\left[E_1-1,\cdots,\left[E_1-\alpha_1+1, \cdots , \left[E_d,\left[E_d-1,\cdots,\left[E_d-\alpha_d+1, x^\beta\right]\right] \cdots \right]\right.\right.\right. \]
 equals $0$ for any $x^\beta \neq x^\alpha$ in the support of $r_1,\dots,r_n$, and is a nonzero multiple of $x^\alpha$ if $\beta=\alpha$. Thus, $r_1,\dots,r_n$, applying these operators we obtain a multiple of $x^\alpha$, and for some $i$ we obtain a nonzero multiple; these operators correspond to some $A'\in B^{(aw+1)t}_{\boxtimes}$, and $x^{\alpha}\in B^{(aw+1)t}$.
 
 Finally, by linear simplicity, there is some element $A''\in B^{C(aw+1)t}_{\boxtimes}$ such that the corresponding sequence of operators sends these scalar multiples of $x^\alpha$ to elements of $K$, not all zero. The operators corresponding to the composition of $A,A',A''$ give an element of $B^{awt +(aw+1)t+C(aw+1)t}_{\boxtimes} = B^{((C+2)aw+(C+1))t}_{\boxtimes}$.
 
 This shows that $R$ satisfies the condition of the previous lemma.
\end{proof}

\begin{theorem}
Let $R$ be a finitely generated $\N$-graded algebra over a field $K$ of characteristic zero. Let $\cBb$ be a Bernstein filtration on $D_{R|K}$, and suppose that $\Dim(D_{R|K},\cBb)=2\dim(R)$ and $0<\e(D_{R|K},\cBb)<\infty$.
If $(D_{R|K} \otimes_K D_{R|K}^{\op}, \cBb_{\boxtimes})$ is linearly simple, then every nonzero  filtered $D_{R|K}$-bimodule has dimension at least equal to $2\dim(R)$, and when equality holds, the multiplicity is greater than zero.
\end{theorem}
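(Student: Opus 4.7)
The plan is to reduce the bimodule statement to the module-theoretic form of Bernstein's inequality (Bavula's theorem), already quoted in the excerpt, applied to the filtered algebra $(D_{R|K} \otimes_K D_{R|K}^{\op}, \cBb_{\boxtimes})$.

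First, I would invoke Remark~\ref{rem:box}: any filtered $(D_{R|K},\cBb)$-bimodule $(M,\cGb)$ is the same data as a filtered module over the filtered $K$-algebra $(D_{R|K} \otimes_K D_{R|K}^{\op}, \cBb_{\boxtimes})$, with the very same filtration $\cGb$ on $M$. In particular, $\Dim(\cGb)$ and $\ee(\cGb)$ depend only on $\cGb$ itself and are unaffected by this reinterpretation, so it suffices to prove the bound with $M$ viewed as an $(A, \cFb)$-module for $A = D_{R|K} \otimes_K D_{R|K}^{\op}$ and $\cFb = \cBb_{\boxtimes}$.

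Second, the hypothesis that $(D_{R|K} \otimes_K D_{R|K}^{\op}, \cBb_{\boxtimes})$ is linearly simple allows us to apply \cite[Theorem~3.1]{Bav09} (cf.~\cite[Theorem~3.4]{AHJNTW2}) to the nonzero filtered module $(M,\cGb)$. This yields $\Dim(\cGb) \geq \tfrac{1}{2}\Dim(\cBb_{\boxtimes})$, together with the companion statement that the multiplicity of $\cGb$ is positive whenever equality of dimensions holds, which is the standard accompanying bound in the Bavula/Bernstein package.

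Third, I would unpack the right-hand side using Remark~\ref{remark-double-filtration}: the hypotheses $\Dim(D_{R|K},\cBb) = 2\dim(R)$ and $0 < \ee(D_{R|K},\cBb) < \infty$ immediately give $\Dim(\cBb_{\boxtimes}) = 2\Dim(\cBb) = 4\dim(R)$ and $\ee(\cBb_{\boxtimes}) = \ee(\cBb)^2$, which is finite and positive. Substituting into the Bavula inequality yields $\Dim(\cGb) \geq 2\dim(R)$, and when equality holds, the multiplicity statement above gives $\ee(\cGb) > 0$, completing the proof.

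The main technical point (rather than a genuine obstacle) is to confirm that the form of Bavula's theorem being cited does indeed provide the positivity of multiplicity in the case of equality, not merely the dimension inequality; once that version of the statement is in hand, the remainder of the argument is purely a matter of bookkeeping with Remark~\ref{remark-double-filtration} and the identification of bimodules as modules over the enveloping algebra.
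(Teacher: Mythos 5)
Your proposal is correct and takes essentially the same route as the paper's proof: reinterpret the filtered bimodule as a filtered left module over $(D_{R|K}\otimes_K D_{R|K}^{\op},\cBb_{\boxtimes})$, compute $\Dim(\cBb_{\boxtimes})=4\dim(R)$ via Remark~\ref{remark-double-filtration}, and invoke Bavula's theorem. Your parenthetical worry about whether the cited version of Bavula's theorem actually delivers positivity of multiplicity in the equality case is well taken --- the version quoted in the paper's background section only records the dimension inequality --- but the paper's own proof handles this exactly as you do, by deferring to the fuller statement in \cite[Theorem~3.4]{AHJNTW2}.
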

\begin{proof}
We consider any filtered $D_{R|K}$-bimodule as a filtered left $(D_{R|K} \otimes_K D_{R|K}^{\op}, \cBb_{\boxtimes})$-module. Since $\Dim(D_{R|K} \otimes_K D_{R|K}^{\op}, \cBb_{\boxtimes}) = 4\dim(R)$ by Remark~\ref{remark-double-filtration}, the result follows from \cite[Theorem~3.4]{AHJNTW2}.
\end{proof}

\begin{definition}
Let $R$ be a graded algebra over a field $K$ of characteristic zero. Let $\cBb$ be a Bernstein filtration on $D_{R|K}$, and suppose that $\Dim(D_{R|K},\cBb)=2\dim(R)$, that ${0<\ee(D_{R|K},\cBb)<\infty}$
and that $(D_{R|K} \otimes_K D_{R|K}^{\op}, \cBb_{\boxtimes})$ is linearly simple.

We say that a $D_{R|K}$-bimodule $M$ is {\emph{holonomic}} if there is some filtration $(M,\cGb)$ such that $\Dim(M,\cGb)=2\dim(R)$ and ${\ee(M,\cGb)<\infty}$.
\end{definition}

\begin{corollary}\label{cor:fin-length}
In the setting of the previous definition, every holonomic bimodule has finite length.
\end{corollary}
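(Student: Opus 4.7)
The plan is to follow the classical argument for finite length of holonomic modules, viewing a $D_{R|K}$-bimodule via Remark~\ref{rem:box} as a left module over $(D_{R|K}\otimes_K D_{R|K}^{\op},\cBb_\boxtimes)$. By Remark~\ref{remark-double-filtration} together with our hypotheses, this filtered algebra has $\Dim(\cBb_\boxtimes)=4\dim(R)$ and $\ee(\cBb_\boxtimes)=\ee(\cBb)^2<\infty$, and the preceding theorem supplies the Bernstein-type inequality $\Dim\ge 2\dim(R)$ for any nonzero filtered bimodule, with positive multiplicity when equality holds. Given a strictly increasing chain $0=M_0\subsetneq M_1\subsetneq\cdots\subsetneq M_k\subseteq M$ of sub-bimodules, I would endow each $M_i$ with the subspace filtration $G^n_i:=G^n\cap M_i$ and each $M_i/M_{i-1}$ with the quotient filtration $\overline{G}^n_i:=G^n_i/G^n_{i-1}$; these are compatible with $\cBb_\boxtimes$, and since they are dominated by $\cGb$ each has dimension at most $2\dim(R)$. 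Thus the preceding theorem forces equality and positive multiplicity on every nonzero subquotient.

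From the short exact sequences $0\to G^n_{i-1}\to G^n_i\to\overline{G}^n_i\to 0$ of $K$-vector spaces I obtain the telescoping identity $\dim_K G^n=\sum_{i=1}^k\dim_K\overline{G}^n_i$. Because each subquotient's filtration has minimal dimension and finite multiplicity, a standard Hilbert--Samuel-type argument shows that its dimension function is polynomial of degree $2\dim(R)$ to leading order, so the limsup defining the multiplicity is a genuine limit, and the identity promotes to the additivity
\[\ee(M,\cGb)=\sum_{i=1}^k\ee\bigl(M_i/M_{i-1},\overline{G}^\bullet_i\bigr).\]

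The step I expect to be the main obstacle is producing a uniform positive lower bound $c$, independent of the chain, on the multiplicity of an arbitrary nonzero filtered bimodule of minimal dimension. This must be extracted from the quantitative content of Bavula's inequality \cite[Theorem~3.4]{AHJNTW2}: linear simplicity of $(D_{R|K}\otimes_K D_{R|K}^{\op},\cBb_\boxtimes)$ with constant $C$ yields an estimate of the form $\dim_K\overline{G}^n_i\cdot\dim_K\overline{G}^{Cn}_i\gtrsim\dim_K B^n_\boxtimes$, and comparing leading coefficients produces such a $c$ depending only on $C$, $\dim(R)$, and $\ee(\cBb)$. With this in hand, $k\le\ee(M,\cGb)/c$, bounding the length of every chain of sub-bimodules and showing that $M$ has finite length.
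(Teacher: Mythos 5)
Your top-level strategy is the same as the paper's: view the bimodule as a module over $D_{R|K}\otimes_K D_{R|K}^{\op}$ with the filtration $\cBb_\boxtimes$ (which has dimension $4\dim R$ and finite multiplicity by Remark~\ref{remark-double-filtration}), and then invoke the finite-length statement for holonomic modules over a linearly simple filtered algebra. The paper's proof is exactly this one-line reduction followed by a citation of \cite[Theorem~3.4]{AHJNTW2}; you are instead re-deriving the content of that cited theorem.

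Within your sketch there is one genuine misstep. The claim that ``a standard Hilbert--Samuel-type argument shows that [the subquotient's] dimension function is polynomial of degree $2\dim(R)$ to leading order, so the limsup defining the multiplicity is a genuine limit'' is not justified and is false for arbitrary filtrations compatible with $\cBb_\boxtimes$: nothing in the definitions forces $\dim_K \overline{G}^n_i$ to be eventually polynomial, and $\limsup$ need not be $\lim$, so the asserted additivity $\ee(M,\cGb)=\sum_i\ee(M_i/M_{i-1},\overline{G}^\bullet_i)$ does not follow from the telescoping identity. Fortunately you do not need it. The estimate you gesture at in your last paragraph is the right thing and makes the detour through multiplicity additivity unnecessary: linear simplicity with constant $C$ gives an \emph{eventual pointwise} bound of the form $\dim_K \overline{G}^n_i\cdot \dim_K\overline{G}^{Cn}_i \geq \dim_K B^n_\boxtimes$ for all $n\gg 0$ (not merely along a subsequence), hence $\dim_K\overline{G}^m_i\geq c\,m^{2\dim R}$ for $m\gg 0$ and a constant $c$ depending only on $C$, $\dim R$, and $\ee(\cBb)$. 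Summing the telescoping identity $\dim_K G^m=\sum_{i=1}^k\dim_K\overline{G}^m_i$ then directly gives $\ee(M,\cGb)\geq kc$, so $k\leq \ee(M,\cGb)/c$, with no need to upgrade any limsup to a limit. If you want this proof to be self-contained rather than a citation, you should drop the polynomial-growth claim and spell out the pointwise bound; as written, that intermediate step is a gap.
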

\begin{proof}
This follows from \cite[Theorem~3.4]{AHJNTW2}, again by identifying bimodules with modules over the tensor product algebra.
\end{proof}

\begin{proposition}\label{prop:fs-bimod-holo}
Let $K$ be a field of characteristic zero and $R$ be a finitely generated $\N$-graded $K$-algebra with $R_0=K$. Suppose that $D_{R|K}$ with the Bernstein filtration is linearly simple, with dimension $2\dim(R)$,  and positive finite multiplicity. Then $D_{R_f(s)|K(s)}\fs$ is a holonomic $D_{R(s)|K(s)}$-bimodule.
\end{proposition}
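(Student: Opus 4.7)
The strategy is to build an explicit filtration on $D_{R_f(s)|K(s)}\fs$ using the localized Bernstein filtration on $D_{R_f|K}$, then read off its growth from Lemma~\ref{lem:loc-ber-const}. Let $\cGb$ be the localized filtration of $\cBb$ at $f$ in the sense of Definition~\ref{def:loc filt}. By Lemma~\ref{Localizing Things}(3), base changing along $K\subseteq K(s)$ gives the filtrations $\cBb(s):=K(s)\otimes_K\cBb$ on $D_{R(s)|K(s)}$ and $\cGb(s):=K(s)\otimes_K\cGb$ on $D_{R_f(s)|K(s)}$. I would then define
\[ H^n := G^n(s)\cdot\fs \subseteq D_{R_f(s)|K(s)}\fs, \]
yielding an ascending, exhaustive $K(s)$-filtration $\mathcal{H}^\bullet$ on the bimodule.

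Next I would check that $\mathcal{H}^\bullet$ is compatible, up to linear reindexing, with $\cBb(s)$ as a bimodule filtration. Left compatibility $B^i(s)\cdot H^n\subseteq H^{n+i}$ is immediate from $B^i G^n\subseteq G^{i+n}$, which is built into the localized filtration. Right compatibility requires analyzing the formula $\delta\fs\cdot\beta=\delta\,\Theta(\beta)\fs$, using the explicit expansion
\[ \Theta(\beta) = \sum_{k=0}^{\ord(\beta)}(-1)^k\binom{s}{k}[\beta]_f^k f^{-k}. \]
For $\beta\in B^i$, each iterated commutator $[\beta]_f^k$ lies in a translate $B^{i+O(k)}$ of the Bernstein filtration (since bracketing with $f$ shifts Bernstein degree by the constant $\deg(f)-w$), while $f^{-k}\in G^k$; combined with Lemma~\ref{lem:lindom} to bound $\ord(\beta)$ linearly in $i$, this yields $\Theta(B^i(s))\subseteq G^{Ci}(s)$ for some constant $C$, so $H^n\cdot B^i(s)\subseteq H^{n+Ci}$.

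Finally I would estimate $\dim_{K(s)}(H^n)=\dim_K(G^n)$. Lemma~\ref{lem:loc-ber-const} supplies a constant $C'$ with $G^n\subseteq f^{-n}B^{C'n}$, hence $\dim_K(G^n)\leq\dim_K(B^{C'n})$; the hypotheses $\Dim(\cBb)=2\dim(R)$ and $\ee(\cBb)<\infty$ then give $\dim_K(B^{C'n})\leq c\,n^{2\dim(R)}$ for some constant $c$. The reverse bound $\dim_K(G^n)\geq\dim_K(B^n)\geq c'n^{2\dim(R)}$ follows from the containment $B^n\subseteq G^n$ built into $\cGb$ together with $\ee(\cBb)>0$. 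Thus $\Dim(\mathcal{H}^\bullet)=2\dim(R)$ with $\ee(\mathcal{H}^\bullet)<\infty$, so $\mathcal{H}^\bullet$ witnesses the holonomicity of $D_{R_f(s)|K(s)}\fs$. The main obstacle is the right-action compatibility in the second step, which carefully combines the explicit form of $\Theta$ with the linear domination of $\cBb$ by the order filtration to ensure that the $f^{-k}$ contributions are absorbed by $\cGb$; everything else is a direct application of Lemma~\ref{lem:loc-ber-const} and the dimensional hypotheses on $(D_{R|K},\cBb)$.
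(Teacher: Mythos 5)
Your proof is correct and takes essentially the same approach as the paper: both build an explicit filtration on $D_{R_f(s)|K(s)}\fs$ from a localization of the Bernstein filtration, verify compatibility with the bimodule action using the explicit expansion of $\Theta$ together with Lemma~\ref{lem:lindom}, and read off dimension and multiplicity from the estimate $G^n\subseteq f^{-n}B^{Cn}$. The only difference is cosmetic: you use the full localized filtration $G^n(s)\fs$ (which needs a final linear reindexing to make the right action strictly compatible, as you flag), whereas the paper instead hand-tailors the filtration $F^i = B^{(Ca+1)i} f^{-Ci}\fs$ so that compatibility holds on the nose.
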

\begin{proof}
We recall that $D_{R(s)|K(s)} \cong D_{R|K} \otimes_K K(s)$. The Bernstein filtration on $D_{R(s)|K(s)}$ is obtained from the Bernstein filtration on $D_{R|K}$ by base change $K(s) \otimes_K -$, so $D_{R(s)|K(s)}$ also has dimension $2\dim(R)$ and positive finite multiplicity, and is also linearly simple; thus it makes sense to consider holonomic $D_{R(s)|K(s)}$-bimodules.

Denote the Bernstein filtration on $D_{R(s)|K(s)}$ by $\cBb$. Let $d\in \NN$ be such that $f\in B^a$ and $C$ be such that $B^i \subseteq D^{Ci}_{R(s)|K(s)}$ for all $i$.

Let $\cFb$ be the filtration on $D_{R_f(s)|K(s)} \fs$ given by $F^i = B^{(Ca+1)i} \frac{1}{f^{Ci}} \fs$. We first show that this is compatible with the bimodule action of $(D_{R(s)|K(s)}, \cBb)$. Let $\delta f^{-Ci}\fs \in F^i$, so $\delta\in B^{(Ca+1)i}$. Let $\alpha\in B^{u}$ and $\beta\in B^v$; note that $\beta\in D^{Cv}_{R(s)|K(s)}$. Then, using Lemma~\ref{lem44}, we have
\begin{align*}
    \alpha \cdot \delta f^{-Ci} \fs \cdot \beta 
    &= \alpha \delta f^{-Ci} \Theta(\beta) \fs 
    = \sum_{j\leq cv} \lambda_j \alpha \delta f^{-Ci}  [\beta]^j_f f^{-j} \fs \\ 
    &= \sum_{j\leq cv} \sum_{k\leq Cv-j} \lambda_{j,k} \alpha \delta  [\beta]^{j+k}_f f^{-Ci-j-k} \fs 
    = \sum_{\ell\leq cv} \lambda_\ell \alpha \delta [\beta]^\ell_f f^{-Ci-\ell} \fs \\ & 
    \in \sum_{\ell\leq cv} B^u B^{(Ca+1)i} B^{v+a\ell} f^{-Ci-\ell} \fs \subset \sum_{\ell\leq cv} B^{u+(Ca+1)i+v+a\ell} f^{-Ci-\ell} \fs \\ 
    &\subset \sum_{\ell\leq cv} B^{u+(Ca+1)i+v+a\ell} B^{Cau+ Cav-a\ell} f^{-C(u+i+v)} \fs\\
    &\subset B^{u+(Ca+1)i+v+Cau+Cav} f^{-C(u+i+v)} \fs\\
    & \subset B^{(Ca+1)(u+i+v)} f^{-C(u+i+v)} = F^{u+i+v},
\end{align*}
as required. It is easy to see that $\cFb$ is exhaustive, and it is clear that $\cFb$ is linearly equivalent to $\cBb \fs$, and thus has dimension $2\dim(R)$ and finite multiplicity.
\end{proof}

\begin{theorem}\label{thm:LSimpliesSBS}
Let $K$ be a field of characteristic zero and $R$ be an $\N$-graded domain finitely generated  over $R_0=K$. Let $\cBb$ be a Bernstein filtration on $D_{R|K}$, and suppose that $\Dim(D_{R|K},\cBb)=2\dim(R)$ and $0<\ee(D_{R|K},\cBb)<\infty$.
If $(D_{R|K} \otimes_K D_{R|K}^{\op}, \cBb_{\boxtimes})$ is linearly simple, then every nonzero element of $R$ has a nonzero sandwich Bernstein-Sato polynomial.
\end{theorem}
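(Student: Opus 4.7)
The plan is to leverage the holonomicity of $D_{R_f(s)|K(s)}\fs$ established in Proposition~\ref{prop:fs-bimod-holo} and extract SBS equations via a finite-length argument, in direct analogy with the classical proof of existence of Bernstein-Sato polynomials for modules over $D_R$.

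By Proposition~\ref{prop:fs-bimod-holo} the $D_{R(s)|K(s)}$-bimodule $D_{R_f(s)|K(s)}\fs$ is holonomic, so by Corollary~\ref{cor:fin-length} it has finite length. Consider the cyclic sub-bimodules
\[ M_k \colonequals D_{R(s)|K(s)} \cdot f^k \fs \cdot D_{R(s)|K(s)}, \qquad k \geq 0. \]
Using the rewrite $\alpha \cdot f^{k+1} \fs \cdot \beta = (\alpha f) \cdot f^k \fs \cdot \beta$, we see $M_{k+1} \subseteq M_k$, so the $M_k$ form a descending chain. Since $R$ is a domain, $\widetilde{\pi}_n(f^k\fs) = f^{k+n}$ is nonzero for all $n \in \N$, so by Lemma~\ref{lem:pin-determine} each $f^k\fs$ and hence each $M_k$ is nonzero. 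Finite length forces this chain to stabilize: $M_k = M_{k+1}$ for some $k$. Stabilization produces an expression
\[ f^k \fs = \sum_i \alpha_i(s) \cdot f^{k+1} \fs \cdot \beta_i(s) \]
with $\alpha_i(s), \beta_i(s) \in D_{R(s)|K(s)}$.

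To convert this into an SBS equation I would shift the $s$-parameter and clear denominators. Choose a common denominator $p(s) \in K[s] \smallsetminus 0$ and write $\alpha_i = \widetilde{\alpha}_i/p(s)$, $\beta_i = \widetilde{\beta}_i/p(s)$ with $\widetilde{\alpha}_i, \widetilde{\beta}_i \in D_{R|K}[s]$. Applying the evaluation map $\widetilde{\pi}_n$ to both sides of the displayed equation and multiplying through by $p(n)^2$ gives
\[ p(n)^2\, f^{n+k} = \sum_i \widetilde{\alpha}_i(n)\, f^{n+k+1}\, \widetilde{\beta}_i(n) \]
in $D_{R_f|K}$ for every $n \in \Z$ with $p(n) \neq 0$. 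Setting $m = n + k$ and defining $b(s) \colonequals p(s-k)^2$, $\widetilde{\alpha}'_i(s) \colonequals \widetilde{\alpha}_i(s-k)$, $\widetilde{\beta}'_i(s) \colonequals \widetilde{\beta}_i(s-k)$, this becomes $b(m) f^m = \sum_i \widetilde{\alpha}'_i(m) f^{m+1} \widetilde{\beta}'_i(m)$ for infinitely many $m \in \N$.

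Since $f$ is a nonzerodivisor, Lemma~\ref{lem:pin-determine} lifts this pointwise identity to the equality $b(s)\fs = \sum_i \widetilde{\alpha}'_i \cdot f \fs \cdot \widetilde{\beta}'_i$ in $D_{R_f|K}[s]\fs$. By Proposition~\ref{prop:formalSBS}, this is an SBS functional equation for $f$ with polynomial $b(s) = p(s-k)^2 \neq 0$. Conceptually, the work is done by Proposition~\ref{prop:fs-bimod-holo} and Corollary~\ref{cor:fin-length}; the main obstacle in the present argument is purely the bookkeeping involved in the shift $s \mapsto s - k$ and the verification that the resulting $b(s)$ is nonzero.
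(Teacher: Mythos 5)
Your proof is essentially the paper's own argument: both establish holonomicity of the bimodule $D_{R_f(s)|K(s)}\fs$ via Proposition~\ref{prop:fs-bimod-holo}, invoke finite length from Corollary~\ref{cor:fin-length}, use DCC to stabilize the descending chain of cyclic sub-bimodules generated by $f^k\fs$, and then shift $s\mapsto s-k$ and clear denominators to extract an SBS equation. If anything you are slightly more careful than the paper in noting that each $M_k$ is nonzero because $R$ is a domain (so the stabilization is nontrivial), and in spelling out how the denominator $p(s)$ produces a nonzero $b(s)=p(s-k)^2$.
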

\begin{proof}
By Proposition~\ref{prop:fs-bimod-holo} and Corollary~\ref{cor:fin-length}, $D_{R_f(s)|K(s)}\fs$ has finite length, and hence DCC, as a $D_{R(s)|K(s)}$-bimodule. In particular, the chain of submodules
\[ D_{R(s)|K(s)} \cdot \fs \supseteq D_{R(s)|K(s)} \cdot f \fs \supseteq 
 D_{R(s)|K(s)} \cdot f^2 \fs \supseteq \cdots \]
 stabilizes, so there is some $t$ and operators $\alpha'_i(s),\beta'_i(s)\in D_{R(s)|K(s)}$ such that
 \[ f^{t} \fs = \sum_i \alpha_i(s) f^{t+1} \fs \beta_i(s).\]
 Then (e.g., using Lemma~\ref{lem:pin-determine}), we have
 \[ \fs = \sum_i \alpha'_i(s-t) f \fs \beta'_i(s-t).\]
 We can multiply through by an element $b(s)\in K[s]$ to obtain an equation of the form
 \[ b(s) \fs = \sum_i \alpha_i(s-t) \fs \beta_i(s-t)\]
 with $\alpha_i(s-t),\beta_i(s-t)\in D_{R|K}[s]$, as required.
\end{proof}

\begin{corollary}\label{cor:toric}
    If $R$ is a finitely generated $K$-subalgebra of a polynomial ring generated by monomials, and $(D_{R|K},\cBb)$ is linearly simple for some Bernstein filtration $\cBb$, then every nonzero element of $R$ admits a nonzero sandwich Bernstein-Sato polynomial.
\end{corollary}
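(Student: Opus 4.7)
The plan is to combine Proposition~\ref{prop:toricdouble} with Theorem~\ref{thm:LSimpliesSBS}, after dispensing with the remaining numerical hypotheses of the latter. Since $R$ is a subring of a polynomial ring it is automatically a domain, and the assumed existence of a Bernstein filtration on $D_{R|K}$ presupposes the $\N$-grading with $R_0 = K$; so the ``$\N$-graded domain finitely generated over $R_0=K$'' hypothesis of Theorem~\ref{thm:LSimpliesSBS} is free. Linear simplicity of $(D_{R|K} \otimes_K D_{R|K}^{\op}, \cBb_{\boxtimes})$ is supplied directly by Proposition~\ref{prop:toricdouble} applied to the given linearly simple $(D_{R|K}, \cBb)$.

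It remains to verify the two numerical conditions $\Dim(D_{R|K},\cBb) = 2\dim(R)$ and $0 < \ee(D_{R|K},\cBb) < \infty$. For a monomial subalgebra $R$ of a polynomial ring, $D_{R|K}$ admits an explicit $K$-basis of operators of the form $x^a \partial^b$ in the ambient Weyl algebra, subject to a linear condition on the exponent pair $(a,b)$ that encodes preservation of the semigroup generating $R$ (compare Example~\ref{ex Veronese} and the description via Musson's theorem). Counting these basis elements whose Bernstein weight is at most $n$ becomes a count of lattice points in the $n$-th dilation of a rational polyhedron of real dimension $2\dim(R)$, and so the resulting growth rate is a polynomial of degree $2\dim(R)$ in $n$ with positive rational leading coefficient; this establishes both the dimension and multiplicity assertions simultaneously.

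With all four hypotheses of Theorem~\ref{thm:LSimpliesSBS} in place, that theorem immediately implies that every nonzero $f \in R$ admits a nonzero sandwich Bernstein-Sato polynomial. The only non-formal step is the polytope count behind the dimension and multiplicity estimate, which I expect to be the main obstacle; however, this is controlled by the monomial structure of $R$ in a routine way, and could alternatively be replaced by citation of the corresponding invariant computation for Bernstein filtrations on graded affine semigroup rings (along the lines of the computations underlying the examples in the introduction).
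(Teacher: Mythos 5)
Your plan is exactly the paper's: the proof there consists of the single sentence ``Follows from Theorem~\ref{thm:LSimpliesSBS} and Proposition~\ref{prop:toricdouble}.'' You are also right to observe that Theorem~\ref{thm:LSimpliesSBS} additionally requires $\Dim(D_{R|K},\cBb)=2\dim(R)$ and $0<\ee(D_{R|K},\cBb)<\infty$, and that neither cited result supplies these; the paper's one-line proof tacitly leaves them to the reader, so on this point your proposal is actually more careful than the printed argument. Your reduction of the numerical conditions to lattice-point growth in a rational polyhedral region of dimension $2\dim(R)$ gives the right asymptotics, but the intermediate description is too optimistic as stated: for a general affine semigroup ring $R$ of dimension $d$, the $\Z^{d}$-graded piece of $D_{R|K}$ in multidegree $a$ has the form $x^a\,\mathbb{I}(\Omega(a))$, where $\mathbb{I}(\Omega(a))$ is the ideal of $K[\theta_1,\dots,\theta_d]$ vanishing on a certain subset $\Omega(a)$ of the lattice (the Saito--Traves description; Musson's theorem is the toric/normal case). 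Those ideals are typically not monomial, so a $K$-basis made only of operators $x^a\partial^b$ subject to a linear condition on $(a,b)$ is available for the Veronese (Example~\ref{ex Veronese}) but not in general, and when the ambient polynomial ring has more variables than $\dim(R)$ the description also involves a quotient, as the paper's own Segre example shows. None of this changes the $n^{2d}$ growth rate once one reduces to $d$ Laurent variables and counts filtered-degree-$\leq n$ elements over the relevant cone, but the bookkeeping is less routine than your closing sentence suggests, and the cleanest route is to cite the known $\Z^d$-graded structure theory for $D$ of affine semigroup rings for the dimension and multiplicity estimate rather than to rederive it.
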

\begin{proof}
    Follows from Theorem~\ref{thm:LSimpliesSBS} and Proposition~\ref{prop:toricdouble}.
\end{proof}

\section{Linear simplicity from one sandwich Bernstein-Sato polynomial}

\begin{proposition}\label{prop:fastsmallmult} Let $R$ be a finitely generated algebra over a field $K$. Let $\cFb$ be a filtration on $D_{R|K}$ that is linearly dominated by the order filtration. Let $f\in R$ be a nonzerodivisor and $\cGb$ be the localized filtration of $\cFb$ at $f$ on $D_{R_f|K}$. Suppose that $(D_{R_f|K},\cGb)$ is linearly simple. Then there exist $a,b\in\N$ such that for all $n$ and all nonzero $\delta\in F^n$, we have $f^a \in F^{bn} \delta F^{bn}$. 
\end{proposition}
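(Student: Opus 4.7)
The plan is to use the linear simplicity of $\cGb$ on $D_{R_f|K}$ to obtain an expression $1=\sum_i\gamma_i\delta\eta_i$ in $D_{R_f|K}$, and then clear denominators by multiplying by a sufficiently large power of $f$ on each side, using the rearrangement identities of Lemma~\ref{lemma:rearragnement}.

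Given nonzero $\delta\in F^n\subseteq G^n$, linear simplicity with some constant $C$ yields $\gamma_i,\eta_i\in G^{Cn}$ such that $1=\sum_i\gamma_i\delta\eta_i$. The key technical ingredient will be the following ``normal form'': every $\alpha\in G^m$ can be written as $\alpha=f^{-N}\alpha'$ with $N\leq(1+e)m$ and $\alpha'\in F^{(1+de)m}$, where $e$ is a constant with $F^k\subseteq D^{ek}_{R|K}$ for all $k$ (supplied by the hypothesis that $\cFb$ is linearly dominated by the order filtration) and $d$ is such that $f\in F^d$; symmetrically, every $\beta\in G^m$ can be written as $\beta=\beta'f^{-M}$ with the same bounds. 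To prove the first normal form, I consider a summand $f^{-a_1}\delta_1\cdots f^{-a_k}\delta_k$ of $G^m$ (with $\delta_j\in F^{b_j}$ and $\sum a_j+\sum b_j\leq m$); after prepending $\delta_0=1$ and appending $f^0$, the first form of Lemma~\ref{lemma:rearragnement} rewrites this summand as $\sum A f^{-\sum a_j-\sum i_j}[\delta_1]_f^{i_1}\cdots[\delta_k]_f^{i_k}$, and the bounds follow from $\sum a_j\leq m$, $\sum i_j\leq\sum\ord(\delta_j)\leq em$, and $[\delta_j]_f^{i_j}\in F^{b_j+i_jd}$. The symmetric claim is proved identically using the second form of Lemma~\ref{lemma:rearragnement}.

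Applying the normal form to each $\gamma_i$ and $\eta_i$, I write $\gamma_i=f^{-N_i}\gamma_i'$ and $\eta_i=\eta_i'f^{-M_i}$ with $N_i,M_i\leq(1+e)Cn$ and $\gamma_i',\eta_i'\in F^{(1+de)Cn}$, and multiply the identity $1=\sum_i\gamma_i\delta\eta_i$ by $f^{(1+e)Cn}$ on each side: this yields $f^{2(1+e)Cn}=\sum_i(f^{(1+e)Cn-N_i}\gamma_i')\,\delta\,(\eta_i'f^{(1+e)Cn-M_i})$, where each flanking factor lies in $F^{(1+d+2de)Cn}$ (combining $f\in F^d$ with the bound on $\gamma_i',\eta_i'$). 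The identity lives a priori in $D_{R_f|K}$; however, both sides are in $D_{R|K}$, and the canonical map $D_{R|K}\hookrightarrow D_{R_f|K}$ is injective because $f$ is a nonzerodivisor, so the identity descends to $D_{R|K}$. Taking $a=2(1+e)C$ and $b=(1+d+2de)C$ thus produces $f^{an}\in F^{bn}\delta F^{bn}$.

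The main obstacle will be the careful bookkeeping in the normal-form step: tracking filtration levels through iterated brackets $[\delta_j]_f^{i_j}$ and ensuring uniform bounds across all summands of $\alpha$. I note a mild discrepancy: the derivation above yields $f^{an}$ rather than the literal $f^a$ appearing in the stated proposition, which I read as likely a missing scaling factor, since an element of $G^n$ can genuinely require up to $\sim n$ factors of $f^{-1}$ (e.g.\ $f^{-n}\in G^n$), forcing the power of $f$ that clears denominators to scale with $n$.
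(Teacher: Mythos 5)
Your proof is correct and follows essentially the same route as the paper's: both apply linear simplicity of $\cGb$ to write $1=\sum_i\gamma_i\delta\eta_i$ with $\gamma_i,\eta_i\in G^{Cn}$, unwind the localized filtration into products of elements of $D_{R|K}$ and negative powers of $f$, invoke Lemma~\ref{lemma:rearragnement} to push all $f^{-1}$ factors to the outer ends while bounding the $\cFb$-levels of the iterated brackets $[\delta_j]_f^{i_j}$, and then clear denominators by multiplying through by a sufficiently large power of $f$ (the final constants differ only in minor bookkeeping). Your observation about the typo is also correct: the conclusion should read $f^{an}\in F^{bn}\delta F^{bn}$ rather than $f^a$, as confirmed both by the paper's own proof (which produces $f^{2(L+1)Cn}$) and by every downstream invocation of this proposition (Lemma~\ref{lem:finite cover}, Theorem~\ref{SBSimpliesLS}, and Theorem~\ref{thm:diffsig} all use the form $f^{an}$).
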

\begin{proof}
Fix a constant $C\in \N$ such that for all $\delta\in G^n\smallsetminus 0 $ we have $1\in G^{Cn} \delta G^{Cn}$, and, by Lemma~\ref{fg-filt-comapre}, a constant $L$ such that $F^{n} \subseteq D^{Ln}_R$ for all $n$. Take $m$ such that $f\in F^{m}$.
Let $\delta\in F^n \subset G^n$. We can write
\[ 1 = \sum_i \alpha_i \delta \beta_i\]
for some operators $\alpha_i,\beta_i\in G^{Cn}$. By Lemma~\ref{Localizing Things}(1), we can write each $\alpha_i$ and $\beta_i$ as a sum of products of elements of $D_{R|K}$ and negative powers of $f$; after expanding the products of these sums, without loss of generality, we can take each $\alpha_i$ and $\beta_i$ to be a product of elements of $D_{R|K}$ and negative powers of $f$. That is, we can write
\[ \alpha_i = \alpha_{1,i} f^{-a_{1,i}} \cdots \alpha_{t,i} f^{-a_{t,i}}\]
\[ \beta_i = f^{-b_1,i} \beta_{1,i}  \cdots f^{-b_t,i} \beta_{t,i} \]
with $\alpha_{j,i}, \beta_{j,i}\in D_{R|K}$. We can take $\alpha_{j,i}\in F^{c_{j,i}}, \beta_{j,i}\in F^{d_{j,i}}$ with $\sum_j c_{j,i} + \sum_j a_{j,i} \leq Cn$ and $\sum_j d_{j,i} + \sum_j b_{j,i} \leq Cn$ for each $i$, by definition of localized filtration. Note that ${\ord(\alpha_{j,i})\leq L c_{j,i}}$ and $\ord(\beta_{j,i})\leq L d_{j,i}$.

Observe that \[[\alpha_{j,i}]_f^{k}\in F^{c_{j,i}+km}\ \text{and} \ 
[\beta_{j,i}]_f^{k}\in F^{d_{j,i}+km}.\]

Applying Lemma~\ref{lemma:rearragnement}, for each $i$, there are some scalars $\lambda_{i,\underline{j}}$ such that we have
 \begin{align*} \alpha_i &= \sum_{j_t=0}^{\ord{\alpha_{t,i}}} \cdots \sum_{j_1=0}^{\ord{\alpha_{1,i}}} \lambda_{i,\underline{j}} f^{-a_{1,i}-\cdots- a_{t,i} - j_1 - \cdots- j_t} [\alpha_{1,i}]_f^{j_1} \cdots [\alpha_{t,i}]_f^{j_t} \\
 &= \sum_{j_t=0}^{L c_{t,i}} \cdots \sum_{j_1=0}^{L c_{1,i}} \lambda_{i,\underline{j}} f^{-a_{1,i}-\cdots- a_{t,i} - j_1 - \cdots- j_t} [\alpha_{1,i}]_f^{j_1} \cdots [\alpha_{t,i}]_f^{j_t}
 \end{align*}

 Since $\sum_k L c_{k,i} \leq LCn$, the exponent of $f$ in each term is bounded below by $-(L+1)Cn$. Thus, we can write the sum as $f^{-(L+1)Cn}$ times some element of $D_{R|K}$. 
 
 Since $j_1+\cdots +j_t \leq LCn$ as well, for each of the terms in the sum above we have
 \[ [\alpha_{1,i}]_f^{j_1} \cdots [\alpha_{t,i}]_f^{j_t} \in F^{(c_{1,i} + j_1 m) + \cdots + (c_{t,i} + j_t m)} \subseteq F^{(1+Lm)Cn}.
 \]
For any term in the sum, we have an element like so times a constant, perhaps multiplied on the left by some power of $f$ (to account for the larger common denominator); the exponent of $f$ is at most that in the denominator, so is contained in $F^{LCmn}$. We conclude that 
\[ \alpha_i \in f^{-(L+1)Cn} F^{(1+2Lm)Cn}.\]
Along similar lines, one deduces that
 \[ \beta_i \in  F^{(1+2Lm)Cn}f^{-(L+1)Cn}.\]
 Thus, we have
 \[ 1\in f^{-(L+1)Cn} F^{(1+2Lm)Cn} \delta F^{(1+2Lm)Cn}f^{-(L+1)Cn},\]
 and hence
 \[ f^{2(L+1)Cn} \in F^{(1+2Lm)Cn} \delta F^{(1+2Lm)Cn}.\]
 Hence, $a=2(L+1)C$ and $b=(1+2Lm)C$ serve as the desired constants.
\end{proof}

\begin{lemma}\label{lem:finite cover}
Let $R$ be a domain finitely generated over a field $K$. Suppose that $D_{R|K}$ is a finitely generated $K$-algebra. If there exist $f_1,\dots,f_t\in R$ such that $D_{R_{f_i}|K}$ is linearly simple in the generator filtration for all $i$ and $(f_1,\dots,f_t)=R$, then $D_{R|K}$ is linearly simple in the generator filtration.
\end{lemma}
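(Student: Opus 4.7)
The plan is to combine Proposition~\ref{prop:fastsmallmult} with a partition-of-unity argument. First, I will choose a finite $K$-algebra generating set $S$ for $D_{R|K}$ that also contains a set of $K$-algebra generators for $R$, and work with the resulting generator filtration $\cFb$ on $D_{R|K}$. By Lemma~\ref{fg-filt-comapre}, linear simplicity is invariant under change of generating set, so this costs no generality. Because $S$ is a finite set of operators of bounded order, $\cFb$ is automatically linearly dominated by the order filtration, which puts us in the setting required by Proposition~\ref{prop:fastsmallmult}.

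Next, for each $i$, the localized filtration at $f_i$ of $\cFb$ on $D_{R_{f_i}|K}$ coincides with the generator filtration on $D_{R_{f_i}|K}$ corresponding to the generating set $S \cup \{f_i^{-1}\}$, by the remark following Definition~\ref{def:loc filt}. The hypothesis together with Lemma~\ref{fg-filt-comapre} then gives that this localized filtration is linearly simple. Applying Proposition~\ref{prop:fastsmallmult} once for each $i$ yields constants $a_i, b_i \in \NN$ such that for every $n$ and every nonzero $\delta \in F^n$ we have
\[ f_i^{a_i} \in F^{b_i n}\, \delta\, F^{b_i n}. \]

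Finally, I will exploit $(f_1, \ldots, f_t) = R$. Raising a relation $1 = \sum c_j f_j$ to a high enough power and expanding via the multinomial theorem produces $h_1, \ldots, h_t \in R$ with $1 = \sum_i h_i f_i^{a_i}$. Since $S$ contains a $K$-algebra generating set for $R$, there is a uniform $m$ with $h_i \in F^m$ for all $i$. Setting $b = \max_i b_i$ and combining, for any nonzero $\delta \in F^n$ with $n \geq 1$ one finds
\[ 1 = \sum_i h_i f_i^{a_i} \in \sum_i F^m \, F^{bn}\, \delta\, F^{bn} \subseteq F^{(m+b)n}\, \delta\, F^{(m+b)n}, \]
using $m + bn \leq (m+b)n$ for $n \geq 1$. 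The case $n = 0$ is trivial since $F^0 = K$ and nonzero constants are units. Thus $\cFb$ is linearly simple with constant $C = m + b$.

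The main work has already been done in Proposition~\ref{prop:fastsmallmult}, which is tailor-made for precisely this transition from the localization back to $D_{R|K}$; I expect the only real care needed is in matching filtrations (localized versus generator filtration on $D_{R_{f_i}|K}$) and in checking that the element $h_i$ arising from the partition of unity can be absorbed into a uniformly bounded filtered piece, both of which are bookkeeping rather than genuine obstacles.
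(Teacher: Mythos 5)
Your overall structure is the same as the paper's, but there is a genuine gap hiding in your use of Proposition~\ref{prop:fastsmallmult}. The statement of that proposition reads ``$f^a \in F^{bn}\delta F^{bn}$'' with a fixed exponent $a$, but this is a misprint: what the proof actually establishes (note the final line, $f^{2(L+1)Cn}\in F^{(1+2Lm)Cn}\delta F^{(1+2Lm)Cn}$), and what the paper itself invokes in both the proof of Lemma~\ref{lem:finite cover} and Theorem~\ref{SBSimpliesLS}, is $f^{an}\in F^{bn}\delta F^{bn}$, with the exponent of $f$ growing linearly in $n$. Morally this has to be the case: to clear the denominators coming from $G^{Cn}$ in the localized filtration, one must multiply by a power of $f$ comparable to $Cn$.

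Once the exponent grows with $n$, your fixed partition of unity $1 = \sum_i h_i f_i^{a_i}$ no longer finishes the argument --- you would need, for each $n$, a relation $1 = \sum_i h_{i,n} f_i^{a_i n}$ together with control on the filtration level of the $h_{i,n}$. The paper handles this by fixing $u_1,\dots,u_t$ with $1=\sum u_i f_i$, raising to the power $tn$, expanding multinomially, and observing by pigeonhole that each term contains some $f_j^{i_j}$ with $i_j\geq n$; this yields $1=\sum_i v_{i,n} f_i^n$ with $v_{i,n}\in F^{2mtn}$, i.e.\ the cofactors grow only linearly in $n$, which is exactly enough. Substituting $an$ for $n$ and combining with $f_i^{an}\in F^{bn}\delta F^{bn}$ then gives linear simplicity with constant $2mta+b$. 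Your steps before this point --- choosing $S$ to contain algebra generators of $R$, noting $\cFb$ is linearly dominated by the order filtration, and matching the localized filtration with the generator filtration on $D_{R_{f_i}|K}$ --- are all fine and agree with the paper; the only fix needed is to replace the one-shot partition of unity by this $n$-dependent version and track the filtration degree of the cofactors.
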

\begin{proof} Let $\cFb$ denote a generator filtration on $D_{R|K}$. By Proposition~\ref{prop:fastsmallmult}, since each $f_i$ is a nonzerodivisor, there are constants $a_i,b_i$ such that for all $n$ and all nonzero $\delta\in F^n$, we have $f_i^{a_in} \in F^{b_in} \delta F^{b_in}$. Take $m_0$ such that $f_1,\dots,f_t\in F^m$. Let $a$ be the maximum of the $a_i$'s, let $b_0$ be the maximum of the $b_i$'s, and $b=b_0+am_0$. Then
\[ f_i^{an} \in F^{am_0n} f_i^{a_i n} \subseteq F^{am_0n} F^{b_in} \delta F^{b_in} \subseteq F^{amn} F^{b_0n} \delta F^{b_0n} \subseteq F^{bn} \delta F^{bn}.\]

Fix $u_1,\dots,u_t\in R$ such that 
\[ 1 = u_1 f_1 + \cdots + u_t f_t.\]
Pick $m$ such that $u_i \in F^m$ and $f_i\in F^m$ for all $i$. 

For any $n$, we have
\[ 1= (u_1 f_1 + \cdots + u_t f_t)^{tn} = \sum_{i_1+\cdots+i_t=rt} \binom{tn}{i_1,\dots,i_t} u_1^{i_1} \cdots u_t^{i_t} f_1^{i_1}\cdots f_t^{i_t}.\]
By the pigeonhole principle, for each term, at least one of the $i_j$ is greater than $n$, so we can write
\[ 1 = v_{1,n} f_1^n + \cdots + v_{t,n} f_t^n\]
with $v_{i,n}\in F^{2mtn}$ for all $i$. 

Thus, given any nonzero $\delta\in F^n$, we have $f_1^{an},\dots,f_t^{an} \in F^{bn} \delta F^{bn}$, and hence 
\[1\in F^{2mtan} F^{bn} \delta F^{bn} \subseteq F^{Cn} \delta F^{Cn}\]
for $C= 2mta+b$.
\end{proof}

The following lemma is the main technical piece in this section.

\begin{lemma}\label{lem:stdetaleconsts}
Let $R$ be a finitely generated algebra over a field $K$ of characteristic zero, and suppose that $D_{R|K}$ is a finitely generated algebra. Let $S=\left(\frac{R[\theta]}{g(\theta)}\right)_{h}$ be a standard \'etale extension of $R$, where $g$ is a monic polynomial of degree $d$ and $g'(\theta)$ divides $h$. 

Fix a generating set $W$ for $D_{R|K}$ and let $\cFb$ be the associated generator filtration. Let $\cGb$ be the generator filtration on $D_{S|K}$ associated to $W\cup \{ \theta,h^{-1}\}$.

Then there exist constants $A,B$ such that
\[ G^n \subseteq h^{-An} \sum_{i=0}^{d-1} \theta^i F^{Bn}\]
for all $n$.
\end{lemma}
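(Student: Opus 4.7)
The plan is to combine the \'etale-module structure of $D_{S|K}$ with the combinatorial tool of Lemma~\ref{lemma:rearragnement}. By Lemma~\ref{Localizing Things}(2), $D_{S|K}\cong S\otimes_R D_{R|K}$, and since $g(\theta)=0$ implies $S$ is spanned over $R[h^{-1}]$ by $1,\theta,\ldots,\theta^{d-1}$, every element of $D_{S|K}$ admits some normal form $h^{-k}\sum_{i=0}^{d-1}\theta^i\delta_i$ with $\delta_i\in D_{R|K}$. The task is therefore to produce such a normal form with both $k$ and the $F$-level of the $\delta_i$ linear in $n$ for an arbitrary element of $G^n$.

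I would analyze a word $w_1\cdots w_n$ of length $n$ in $W\cup\{\theta,h^{-1}\}$, using that $\theta$ and $h^{-1}$ commute in $S\subseteq D^0_{S|K}$ to reorganize the word as $h^{-a_0}\theta^{b_0}\delta_1 h^{-a_1}\theta^{b_1}\cdots\delta_t h^{-a_t}\theta^{b_t}$ with $\delta_j\in W$ and $\sum a_j+\sum b_j+t\leq n$. Applying Lemma~\ref{lemma:rearragnement} with $f=h$ (a unit in $S$, so the formula is valid in $D_{S|K}$) pulls all $h$-powers to the left, producing $h^{-c}$ with $c\leq\sum a_j+\sum i_j\leq n+Mt\leq(M+1)n$, where $M=\max_{\delta\in W}\ord(\delta)$ and $i_j\leq\ord(\delta_j)$. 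A second application with $f=\theta$ pulls the remaining $\theta$-powers to the left; reducing the resulting $\theta^e$ (with $e\leq n$) modulo $g(\theta)$ expresses it as $\sum_{l<d}\theta^l p_l$ whose coefficients $p_l\in R$ are polynomials of total degree $\leq n$ in the fixed coefficients $a_j$ of $g$, hence lie in $F^{O(n)}$.

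What remains is a product of iterated commutators $[[\delta_1]_h^{i_1}]_\theta^{j_1}\cdots[[\delta_t]_h^{i_t}]_\theta^{j_t}$ with $\delta_k\in W$ and $i_k,j_k\leq M$, so each factor has order $\leq M$ and depends only on finitely many choices. Each individual factor can be expressed in normal form $h^{-A_0}\sum_l\theta^l\alpha_l$ with $\alpha_l\in F^{B_0}$ for universal constants $A_0,B_0$ depending only on $W$, $g$, and $h$; this follows by induction on operator order using the \'etale extension, which via $g'(\theta)\mid h$ lets us compute $\delta(\theta)$ explicitly in terms of $h^{-1}$ and polynomial expressions in $\theta$ with coefficients in $R$. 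The main obstacle is then bounding the product of $t\leq n$ such factors: since every factor has order $\leq M$ (a universal constant), each commutation via Lemma~\ref{lem44} contributes only $O(1)$ extra $h$-power and $F$-level per factor, so iterating over $t\leq n$ factors yields a product in $h^{-O(n)}\sum_i\theta^i F^{O(n)}$, giving the required linear bounds on $A$ and $B$.
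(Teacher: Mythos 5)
Your approach is correct and genuinely different from the paper's. The paper proves a general estimate (its Step 1) for $[\delta,\theta^\ell]$ valid for \emph{arbitrary} $\delta\in F^n\cap D^m_{R|K}$, with constants growing linearly in both $n$ and $m$, and then inducts directly on the $\cGb$-level $n$, passing one factor $\delta\in F^a$ across a running normal form at a cost proportional to $a$. You instead pre-process with Lemma~\ref{lemma:rearragnement} so that the only elements ever commuted past $\theta$ and $h$ come from the \emph{finite} set $\{[[\delta]^i_h]^j_\theta : \delta\in W,\ i,j\le M\}$ and their bounded descendants, whose normal forms have universal constants simply by finiteness — this sidesteps the paper's Step 1 entirely. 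What each buys: the paper's general estimate is more work up front but makes the final induction uniform and self-contained; your version is conceptually lighter because the hard content collapses to a finite check. A few points deserve to be made explicit if you write this up. First, when you apply Lemma~\ref{lemma:rearragnement} with $f=\theta$ you should note that, since all $b_j\ge 0$, the binomial coefficients $A_j$ vanish whenever the intermediate exponents go negative, so the identity actually lives in $D_{S|K}$ despite $\theta$ not being a unit (your parenthetical only addresses $h$). Second, in the combining step the ``$O(1)$ per factor'' cost is only correct if you always commute the \emph{bounded} factor past the running normal form (e.g.~collapse the product from the right, treating $\gamma_{k-1}\cdot(\gamma_k\cdots\gamma_t)$); if you instead move the running product past the next factor, the iteration depth for the $h$- and $\theta$-brackets scales with the running order and the bounds compound super-linearly. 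Third, commuting a bounded factor past $\theta^l$ produces new iterated $\theta$-brackets that are again not in $D_{R|K}$; the reason this is harmless is that they too lie in a finite list (bounded brackets of finitely many elements), and that observation — not just Lemma~\ref{lem44} — is what caps the per-step cost.
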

\begin{proof}
    We proceed in three steps.
    
\textbf{Step 1:}  There are constants $A'',B'',C''$ such that for any $\delta\in F^n \cap D_{R|K}^m$ 
and $\ell\in \N$,
 $[\delta,\theta^{\ell}]\in \sum_{\ell<d} \theta^\ell h^{-A''m} F^{n+B''m+C''\ell}$.

Take $e$ such that $c_k\in F^e$ and $f$ such that $w\in F^f$ where $g'(\theta) = w h$. Let $s=de+f$.

Write $W^b_a = \sum_{\ell < d} \theta^{\ell} h^{-a} F^b$.
If $\delta\in W^b_a$, write $\delta = \delta_0 + \theta \delta_1 + \cdots + \theta^{d-1} \delta_{d-1}$ with $\delta_i\in h^{-a} F^b$. Then we have 
\[\theta \delta = \theta \delta_0 + \cdots + \theta^{d-1} \delta_{d-2}  - \sum_{j<d} c_j \theta^j \delta_{d-1} \in W^{b+e}_{a}\] 
\[ c_k \delta = c_k \delta_0 + \theta c_k \delta_1 + \cdots + \theta^{d-1} c_k \delta_{d-1} \in W^{b+e}_{a},\]
and
\[ \frac{-1}{g'(\theta)} \delta = h^{-1} w \delta_0 + \theta h^{-1} w \delta_1 + \cdots + \theta^{d-1}h^{-1} w \delta_{d-1} \in W^{b+f}_{a+1}.\]
That is,
\begin{equation}
\delta\in W^b_a \Longrightarrow \begin{cases} \theta\delta\in W^{b+e}_a \\ c_k \delta\in W^{b+e}_a \\ \frac{-1}{g'(\theta)}\delta\in W^{b+f}_{a+1} \end{cases}\end{equation}

We have \[ [\delta, \theta^{k}] = \sum_{i=1}^{k} \binom{k}{i} \theta^{\ell-i} [\delta]_{\theta}^i,\]
 \[ [\delta, c_k \theta^{k}] = c_k \sum_{i=1}^{\ell} \binom{k}{i} \theta^{\ell-i} [\delta]_{\theta}^i + \sum_{i=0}^{k} \binom{k}{i} \theta^{k-i} [\delta]^{1,i}_{c_k,\theta},\]
 and hence, 
 \[ \sum_{k=0}^d \left( c_k \sum_{i=1}^k \binom{k}{i} \theta^{k-i} [\delta]^i_\theta + \sum_{i=0}^{k} \binom{k}{i} \theta^{k-i} [\delta]^{1,i}_{c_k,\theta}\right) = 0.\]
As the coefficients of $[\delta]_\theta$ sum to $g'(\theta)$, we obtain 
\[ [\delta]_\theta = \frac{-1}{g'(\theta)}  \sum_{k=0}^{d} \left( c_k \sum_{i=2}^k \binom{k}{i} \theta^{k-i} [\delta]_{\theta}^i + \sum_{i=0}^k \binom{k}{i} \theta^{k-i} [\delta]_{c_k, \theta}^{1,i}  \right),\]
we obtain that
\begin{equation}\label{eq:thetaW}\substack{[\delta]_{\theta}^i \in W_a^b \ \text{for} \ 2\leq i \leq d\ \text{and} \\ \ [\delta]_{c_k,\theta}^{1,i} \in W_{a}^{b} \ \text{for} \ 0\leq i \leq d, \ 0 \leq k \leq d-1} \ \Longrightarrow [\delta]_\theta \in W^{b+s}_{a+1}.\end{equation}

Let $\delta\in F^n$. Then for any $t$-tuple $\underline{\alpha}$ of (possibly repeated) elements from $\{c_0,\dots,c_{d-1}\}$ we have
\begin{equation}\label{eq:p0} [\delta]_{\underline{\alpha}}^{\underline{1}} \in F^{n+te},\end{equation}
where $\underline{1}$ is a vector of ones of length equal to that of $\underline{\alpha}$.

Let $m=\mathrm{ord}(\delta)-1\leq Cn-1$. For a $q=(m-p)$-tuple $\underline{\alpha}$ of (possibly repeated) elements from $\{c_0,\dots,c_{d-1}\}$ applying \eqref{eq:thetaW} to $[\delta]^{p-1,\underline{1}}_{\theta, \underline{\alpha}}$, we have 
$\mathrm{ord}([\delta]^{p,\underline{1}}_{\theta, \underline{\alpha}})=0$ and hence
\begin{equation}\label{eq:pind} [\delta]^{p-1,1,\underline{1}}_{\theta, c_k, \underline{\alpha}}\in W^b_a \ \text{for all} \ 0\leq k \leq d-1  \Longrightarrow [\delta]^{p,\underline{1}}_{\theta, \underline{\alpha}}\in W^{b+q}_{a+1}.\end{equation}

Thus, by induction on $p$, with base case \eqref{eq:p0} and inductive step \eqref{eq:pind}, for $\delta\in F^n$ of order $m$ and any $q=(m-p)$-tuple $\underline{\alpha}$ of (possibly repeated) elements from $\{c_0,\dots,c_{d-1}\}$ we have 
\begin{equation}\label{eq:p} [\delta]^{p,\underline{1}}_{\theta,\underline{\alpha}} \in W^{n+me+pq}_{p}.\end{equation}

Now we claim that for $\delta\in F^n$ of order $m$ and any $q$-tuple $\underline{\alpha}$ (with $q\leq m-p$) of  (possibly repeated) elements from $\{c_0,\dots,c_{d-1}\}$ we have 
 \begin{equation}\label{eq:ind-delta}
 [\delta]^{p,\underline{1}}_{\theta,\underline{\alpha}} \in W^{n+me+(2m-2q-p)s}_{2m-2q-p}.
 \end{equation}
To see this we proceed by descending induction on $t=p+q$ starting with the base case $t=m$, which is just \eqref{eq:p}. For the inductive step, we proceed now by a nested induction on $p$, with the base case $p=0$ covered by \eqref{eq:p0}. For the inductive step of the induction on $p$, given $p\geq 1$ and a $q$-tuple $\alpha$ of (possibly repeated) elements from $\{c_0,\dots,c_{d-1}\}$ we apply \eqref{eq:thetaW} to $\delta'=[\delta]_{\theta,\underline{\alpha}}^{p-1,\underline{1}}$, observing that each $[\delta']_\theta^i=[\delta]_{\theta,\underline{\alpha}}^{p-1+i,\underline{1}}$ for $2\leq i\leq d$ and $[\delta']_{c_k,\theta}^{1,i}=[\delta]_{\theta,c_k,\underline{\alpha}}^{p-1+i,1,\underline{1}}$ for $0\leq i \leq d$, $0 \leq k \leq d-1$ is subject to the induction hypothesis, and in particular \[[\delta']_\theta^i\in W^{n+me+(2m-2q-p+1-i)s}_{2m-2q-p+1-i}\subseteq W^{n+me+(2m-2q-p-1)s}_{2m-2q-p-1}\] and \[[\delta']_{c_k,\theta}^{1,i}\in W^{n+me+(2m-2q-2-p+1-i)s}_{2m-2q-2-p+1-i}\subseteq W^{n+me+(2m-2q-p-1)s}_{2m-2q-p-1}.\]
Then, by \eqref{eq:thetaW}, we get that $\delta = [\delta']_\theta \in W^{n+me+(2m-2q-p)s}_{2m-2q-p}$, completing the inductions.

Applying \eqref{eq:ind-delta} with $q=0$, we get that $[\delta]_{\theta}^p \in W^{n+m(e+2s)}_{2m}$ for all $p$. Then 
\[[\delta,\theta^\ell]=\sum_{j=1}^\ell \binom{\ell}{j} \theta^{\ell-j} [\delta]^j_\theta\in \theta^\ell W^{n+(e+2s)m}_{2m}\subseteq W^{n+(e+2s)m+ e \ell}_{2m},\]
so the constants $A''=2$, $B''=e+2s$, and $C''=e$ suffice.

\textbf{Step 2:} There are constants $A',B',C',E'$ such that for any $\delta\in F^n \cap D^m$ and $v,\ell\in \N$ we have ${\sum_{\ell<d} [\delta, h^{-v}\theta^\ell}]  \in \sum_{\ell<d} \theta^\ell h^{-v-A'm} F^{n+B'm+C'\ell + E'}$.

Recall that for all $j\in\Z$,
we have \[[\delta,h^{v}] = \sum \binom{v}{j} h^{v-j} [\delta]_h^j\]
where the sum runs from $j=1$ to $\min\{e,v\}$ if $v> 0$ and to $\mathrm{ord}(\delta)$ if $v<0$.
Thus, by a straightforward induction, for $v>0$
\[ [\delta]_{h}^v = \sum_{j=0}^{v-1} (-1)^j \binom{v}{j} h^j [\delta,h^{v-j}].\]
In particular,
\[ [\delta,h^{-v}] = \sum_{i=1}^{e} n_i h^{-v-i} [\delta,h^i]\]
for some constants $n_i\in \Z$.
Write $h=c'_{d-1}\theta^{d-1} + \cdots + c'_1 \theta + c'_0$ with $c'_i\in R$ and let $c'_k\in F^{e'}$. 
Writing $h^i=\sum_{\ell<d} c'_{d-1,i}\theta^{d-1} + \cdots + c'_{1,i} \theta + c'_{0,i}$, we have by induction that $c'_{k,i}\in F^{ie'+(d-1)(i-1)e} \subseteq F^{s' i}$ with $s'=e' + (d-1)e$.

Let $\delta\in F^n\cap D^m_{R|K}$. Then
 \[\begin{aligned}
 \relax[\delta,h^i] &= \sum_{\ell < d} [\delta, \theta^\ell c'_{\ell,i}] =  \sum_{\ell < d} \theta^\ell [\delta,c'_{\ell,i}] + \sum_{\ell < d}  [\delta, \theta^\ell] c'_{\ell,i}\\ &\subseteq \sum_{\theta<\ell} \theta^\ell F^{n+s'i} + W^{n+B'' m +C'' (d-1)}_{A'' m} F^{s' i} \subseteq W^{n+B'' m+ C'' (d-1) + s' i}_{A''m}.
 \end{aligned} \]
 
 Then, we have
\begin{equation} [\delta,h^{-v}] \subseteq \sum_{i=1}^{m} h^{-v-i} W^{n+B'' m+ C'' (d-1) + s' i}_{A'' m} \subseteq W^{n+(B''+s)m + C'' (d-1)}_{v+ (A''+1)m}.\end{equation}

Consequently, for $\mu = \sum_{\ell<d} \theta^\ell h^{-a} \delta_i \in W_a^b$ with $\delta_i\in F^b \cap D^m_{R|K}$, we have
\begin{equation} \begin{aligned} \relax[\mu,h^{-v}] &= \sum_{\ell<d} [\theta^\ell h^{-a} \delta_i, h^{-v}] =  \sum_{\ell<d} \theta^\ell h^{-a} [\delta_i, h^{-v}] \\& \in \sum_{\ell<d} \theta^\ell h^{-a} W^{b+(B''+s)m+ C'' (d-1)}_{v+ (A''+1)m} \subseteq W^{b+(B''+s)m + (d-1)(C''+e)}_{a+v+(A''+1)m}.
\end{aligned}\end{equation}

Then, applying the formula above we get
$$\begin{aligned}\relax
[\delta, &h^{-v} \theta^\ell]  =   h^{-v} [\delta,\theta^{\ell}] + \theta^{\ell} [\delta,h^{-v}]  + [ [\delta, \theta^{\ell}] , h^{-v} ] \\
&\in h^{-v} W^{n+B''m+C''\ell}_{A''m} + \theta^\ell W^{n+(B''+s)m+ C'' (d-1)}_{v+ (A''+1)m} + W^{n+B''m+(B''+s)m + C''\ell+ (C''+e)(d-1)}_{A''m+v+(A''+1)m}\\&\subseteq W^{n+(2B'' +s)m + C''\ell+ (C''+e)(d-1)}_{v+ (2A''+1)m},
 \end{aligned}$$
so we can take $A'=2A''+1$, $B'=2B'' +s+de$, $C'=C''$, and $E'=(C''+e)(d-1)$.

\textbf{Step 3:} There are constants $A,B$ such that $G^n \subseteq \sum_{\ell<d} \theta^\ell h^{-An} F^{Bn}$.

We claim that $A=A'\gamma$ and $B=B'\gamma +1 + C'(d-1)+E'$ work. Note that $A\geq 1$ and $B\geq E'\geq e$. We proceed by induction on $n$, with $n\leq 1$ clear. We can write any element of $G^n$ as a sum of elements of the form 
\[ \begin{cases} \theta \mu &  \mu\in G^{n-1}, \\ h^{-1} \mu & \mu\in G^{n-1}, \text{and} \\ \delta \mu & \delta\in F^{a}, \ \mu\in G^{n-a}, \ a>0. \end{cases}\]

For $ \theta \mu$ with  $\mu\in G^{n-1}$, we have $\mu \in W^{B(n-1)}_{A(n-1)}$ by the inductive hypothesis. Then ${\theta \mu \in W^{B(n-1) + e}_{A(n-1)} \subseteq W^{Bn}_{An}}$, as required.

For $h^{-1}\mu$ with $\mu\in G^{n-1}$, we again have $\mu \in W^{B(n-1)}_{A(n-1)}$, and $h^{-1}\mu \in W^{B(n-1)}_{A(n-1)+1} \subseteq W^{Bn}_{An}$, as required.

For $\delta \mu$ with $\delta\in F^{a}$ and $\mu\in G^{n-a}$, we have $\mu\in W^{B(n-a)}_{A(n-a)}$, so we can write $\mu$ as a sum of elements of the form $\theta^\ell h^{-v} \alpha$ with $\ell<d$, $v<B(n-a)$, and $\alpha\in F^{A(n-a)}$. Then $\delta \mu$ can be written as a sum of elements of the form $\delta \theta^\ell h^{-v} \alpha$. We have
\[ \delta \theta^\ell h^{-v} \alpha = \theta^\ell h^{-v} \delta  \alpha + [\delta, \theta^\ell h^{-v}] \alpha.\]
We have $ \theta^\ell h^{-v} \delta  \alpha \in W^{0}_{v} F^a F^{B(n-a)} \subseteq W^{a+B(n-a)}_v \subseteq W^{Bn}_{An}$.
Also, since $\delta\in \gamma a$, we have 
\[ [\delta, \theta^\ell h^{-v}] \in W^{a+ B' \gamma a + C' \ell + E'}_{v+A' \gamma a} \subseteq W^{a + B' \gamma a + C'(d-1) + E'}_{A(n-a)+A' \gamma a},\]
so
\[ [\delta, \theta^\ell h^{-v}] \alpha \in W^{a + B' \gamma a + C'(d-1) + E' + B(n-a)}_{A(n-a)+A' \gamma a}.\]
By choice of $A$ and $B$, we have $a + B' \gamma a + C'(d-1) + E' + B(n-a) \leq Bn$ and ${A(n-a)+A' \gamma a \leq An}$. This shows that $\delta\mu\in W^{Bn}_{An}$, completing the induction, and completing the proof.
\end{proof}

\begin{proposition}\label{lemma:etale}
Let $R$ be a domain finitely generated over a field $K$. Suppose that $D_{R|K}$ is a finitely generated $K$-algebra, and that $D_{R|K}$ is linearly simple in the generator filtration. Let $S$ be an \'etale extension of $R$ that is also a domain. Then $D_{S|K}$ is linearly simple in the generator filtration.
\end{proposition}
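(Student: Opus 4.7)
The plan is to reduce to standard étale extensions and then combine the filtration bound from Lemma~\ref{lem:stdetaleconsts} with the linear simplicity of $D_{R|K}$.

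First I would reduce to the standard étale case. By the structure theorem for étale morphisms, every finite-type étale $K$-algebra morphism is Zariski-locally standard étale; thus $\Spec(S)$ admits a finite affine cover by opens $\Spec(S_{g_j})$ with each $S_{g_j}$ of the form $(R[\theta_j]/g_j(\theta_j))_{h_j}$. By Lemma~\ref{lem:finite cover}, it then suffices to prove that each $D_{S_{g_j}|K}$ is linearly simple in a generator filtration.

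So assume $S = (R[\theta]/g(\theta))_h$ with $g$ monic of degree $d$ and $g'(\theta)$ dividing $h$. Fix a generating set $W$ for $D_{R|K}$ with corresponding generator filtration $\cFb$, and let $\cGb$ be the generator filtration on $D_{S|K}$ associated to $W \cup \{\theta, h^{-1}\}$. Given a nonzero $\delta \in G^n$, Lemma~\ref{lem:stdetaleconsts} provides constants $A, B$ and a decomposition $\delta = h^{-An}\sum_{\ell<d}\theta^\ell\delta_\ell$ with each $\delta_\ell \in F^{Bn}$. Since $\delta \neq 0$ and $h$ is a nonzerodivisor (as $S$ is a domain), at least one $\delta_{\ell_0}$ is nonzero; applying linear simplicity of $D_{R|K}$ to $\delta_{\ell_0} \in F^{Bn}$ yields operators $\alpha_i, \beta_i \in F^{C_1 n}$ with $\sum_i \alpha_i \delta_{\ell_0} \beta_i = 1$.

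The remaining step is to assemble from this a sandwich expression for $\delta$ in $D_{S|K}$ evaluating to $1$. Multiplying $\delta$ through by $h^{An}$, which sits in a linearly bounded piece of $\cGb$, reduces the problem to sandwiching $\mu := \sum_{\ell<d}\theta^\ell\delta_\ell$. I would combine the $\alpha_i, \beta_i$ with auxiliary multiplications by $\theta$-polynomials and by $1/g'(\theta) \in S$, mimicking a dual-basis or separability-element construction for the finite étale extension $R \to R[\theta]/g$, to isolate the $\theta^{\ell_0}$-summand up to filtration-bounded error. The non-commutativity of $\alpha_i, \beta_i$ with $\theta$ inside $D_{S|K}$ produces cross-terms whose filtration degree is controlled by the iterated-bracket estimates of Step~1 of Lemma~\ref{lem:stdetaleconsts}, and after linear bookkeeping the resulting sandwich lies in $G^{Cn}$ for a constant $C$ depending only on $A, B, C_1, d$, and the structural constants of Lemma~\ref{lem:stdetaleconsts}.

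The main obstacle is precisely this isolation step: producing a sandwich in $D_{S|K}$ that picks out the $\delta_{\ell_0}$-summand so that linear simplicity of $D_{R|K}$ can be applied. Since operators in $D_{R|K}$ do not commute with $\theta$ inside $D_{S|K}$, every sandwich intertwines the $\theta^\ell$ components, and the bookkeeping required to keep all filtration indices growing only linearly in $n$ leans heavily on the commutator estimates of Lemma~\ref{lem:stdetaleconsts}.
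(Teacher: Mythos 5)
You have the reduction to the standard \'etale case (via Lemma~\ref{lem:finite cover}) and the use of Lemma~\ref{lem:stdetaleconsts} right, and these match the paper. But the core of your argument has a genuine gap, which you yourself flag: the ``isolation step.'' You want to take an arbitrary nonzero $\delta\in G^n$, write $\delta = h^{-An}\sum_{\ell<d}\theta^\ell\delta_\ell$ with $\delta_\ell\in F^{Bn}\subseteq D_{R|K}$, pick a nonzero $\delta_{\ell_0}$, and then produce a sandwich in $D_{S|K}$ that extracts $\delta_{\ell_0}$. There is no obvious way to do this. The $\delta_\ell$ are genuine differential operators, not ring elements, so right multiplication by any element of $D_{S|K}$ will mix the $\theta^\ell$-components through commutators with $\theta$, and the ``dual basis / separability idempotent'' elements you gesture at live in $T$ (or $T\otimes_R T$), not in $R$; the trace map $\mathrm{Tr}_{T/R}$ is $R$-linear but does not interact nicely with operators that fail to be $T$-linear. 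As written, the step ``after linear bookkeeping the resulting sandwich lies in $G^{Cn}$'' is an assertion rather than a proof, and it is not clear that it can be made to work at all at the level of arbitrary-order operators.

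The paper's proof avoids this obstacle entirely by first invoking Lemma~\ref{reduce to R bar}: it suffices to sandwich elements of $G^n\cap D^0_{S|K}$, i.e.~ring elements $s\in S$. For such $s$, the element $t=h^{An}s$ lies in $T=R[\theta]/(g)$, and the $\theta$-coefficients $r_\ell$ in $t=\sum_\ell\theta^\ell r_\ell$ are honest ring elements of $R$ lying in $F^{Bn}\cap D^0_{R|K}$. One then runs the Cayley--Hamilton/norm trick: multiplication by $t$ on the free $R$-module $T$ has a characteristic polynomial with coefficients in $R$ of controlled filtration degree, so $t$ times a suitable $t'\in T$ is a nonzero element of $R$ (here the domain hypothesis guarantees the nonvanishing). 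Linear simplicity of $D_{R|K}$ is then applied to the ring element $tt'\in F^{Bdn+cd}\cap D^0_{R|K}$, and the filtration bookkeeping to pull this back to a sandwich of $s$ in $\cGb$ is routine. The determinant argument depends essentially on commutativity of $R$, which is why the prior reduction to order-zero operators is not an optional simplification but the key move your proposal is missing.
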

\begin{proof}
Note first that by Lemma~\ref{Localizing Things}(2), there is an isomorphism
\[ S\otimes_R D_{R|K} \xrightarrow{\sim} D_{S|K}.\]
 In particular, every element of $D_{S|K}$ can be written as a sum of products of elements of $S$ and (extensions of elements of) $D_{R|K}$, so $D_{S|K}$ is a finitely generated $K$-algebra, and hence it makes sense to discuss the generator filtration.

By the structure theory for \'etale morphisms (see, e.g., \cite[{Tag 00UE}]{stacks-project}), for every ${\mathfrak{q}\in \Spec(S)}$, there is some $g$ such that $R\to S_g$ is standard \'etale. In particular, 
\[\{{g\in S} \ | \ {R\to S_g} \, \text{ is standard \'etale}\}\] generates the improper ideal, so there exist $g_1,\dots,g_t$ such that $R\to S_{g_i}$ is standard \'etale and $(g_1,\dots,g_t)=S$. By Lemma~\ref{lem:finite cover}, we then reduce to the case of a standard \'etale extension.

Let $T=\frac{R[\theta]}{(g)}$ and $S=T_h$ for some polynomial $g$ that is monic in $\theta$ of degree $d$ and $h$ a multiple of $g'(\theta)$. 
Fix a generating set $W$ for $D_{R|K}$ and let $\cFb$ be the associated generator filtration. Fix a constant of linear simplicity for $\cFb$, i.e., a constant $C$ such that for $\delta\in F^n\smallsetminus 0$, we have $1\in F^{Cn} \delta F^{Cn}$.
Let $\cGb$ be the generator filtration on $D_{S|K}$ associated to $W\cup \{ \theta,h^{-1}\}$. Fix $e$ such that $h\in G^e$. By Lemma~\ref{lem:stdetaleconsts}, there are constants $A,B$ such that 
\[ G^n \subseteq h^{-An} \sum_{i=0}^{d-1} \theta^i F^{Bn}\]
for all $n$. Let $c$ be such that the coefficients of $g(\theta)$ are in $F^c$.

Let $s\in G^n \cap D^0_{S|K}$. Write
\[ h^{An} s = \delta_0 + \theta \delta_1 + \cdots + \theta^{d-1} \delta_{d-1}\]
in $T\otimes_K D_{R|K}$ for $\delta_0,\delta_1,\dots,\delta_{d-1} \in F^{Bn}$.
On the other hand, there are elements ${r_0,r_1,\dots,r_{d-1}\in D^0_{R|K}}$ such that 
\[ t= h^{An} s = r_0 + \theta r_1 + \cdots + \theta^{d-1} r_{d-1},\]
so, since $T$ is a free $R$-module, we have $r_i=\delta_i\in F^{Bn}\cap D^0_{R|K}$ for all $i$.
Then one can write 
\[ t \theta^i = \sum_{i,j} r_{i,j} \theta^j \]
with $r_{i,j}\in F^{Bn+c} \cap D^0_{R|K}$. Setting $M=[r_{i,j}]$, we have that $t$ is a root of the integral equation $\det(X I_{d\times d} - M) =0$, which has coefficients in $F^{Bdn+Cd} \cap D^0_{R|K}$. Factoring out powers of $X$ if necessary, moving the constant term to one side, evaluating at $t$, there exists $t'\in F^{Bdn+cd}$ such that 
\[tt'\in (F^{Bdn+cd} \cap D^0_{R|K})\smallsetminus 0,\]
and hence $1\in F^{C({Bdn+cd})} tt' F^{C({Bdn+cd})}$. Thus,
\begin{align*}
1&\in F^{C({Bdn+cd})} tt' F^{C({Bdn+cd})} \subseteq F^{C({Bdn+cd})} t F^{Bdn+cd} F^{C({Bdn+cd})} \\&\subseteq 
F^{C({Bdn+cd})} h^{An} s F^{Bdn+cd} F^{C({Bdn+cd})}
\subseteq 
G^{C({Bdn+cd})} G^{Aen} s G^{Bdn+cd} G^{C({Bdn+cd})}\\&
\subseteq  
G^{(Bd+Ae)n + Ccd} s G^{(C+1)Bd n + (C+1) cd}.
\end{align*}
Then, we can take some $L$ such that $Ln \geq \max\{ (Bd+Ae)n + Ccd, (C+1)Bd n + (C+1) cd\}$ for all $n\in \N$ and we have
\[ 1 \in G^{Ln} s G^{Ln}.\]
By Lemma~\ref{reduce to R bar}, we conclude that $(D_{S|K}, \cGb)$ is linearly simple. This concludes the proof.
\end{proof}

\begin{proposition}\label{prop:smooth}
 If $R$ is a domain that is a smooth algebra over a field $K$ of characteristic zero, then $D_{R|K}$ is linearly simple in the generator filtration.
\end{proposition}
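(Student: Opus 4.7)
The plan is to reduce the general smooth case to the polynomial ring case, using the local structure theory for smooth morphisms together with the étale-transfer and open-cover results already established in this section.

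First I would dispose of the base case $R=K[x_1,\dots,x_n]$. In this case $D_{R|K}$ is the Weyl algebra, and the generator filtration associated to $\{x_1,\dots,x_n,\partial_{x_1},\dots,\partial_{x_n}\}$ is (up to linear equivalence) the classical Bernstein filtration on $R$. Since polynomial rings are Bernstein algebras by \cite[Theorem~5.5]{AHJNTW2}, this Bernstein filtration is linearly simple, and then by Proposition~\ref{prop:simple-filtrations}(1,3) every generator filtration of $D_{R|K}$ is linearly simple.

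Next I would pass to the general case via Zariski descent. Because $R$ is a smooth $K$-algebra and a domain, $R$ is equidimensional of some dimension $d$, and the standard local structure theorem for smooth morphisms guarantees that for each $\mathfrak{p}\in\Spec(R)$ there is an $f\in R\setminus\mathfrak{p}$ and a $d$-tuple $x_1,\dots,x_d\in R$ whose differentials form a basis of $\Omega_{R_f|K}$, making $K[x_1,\dots,x_d]\to R_f$ étale (see e.g.~\cite[Tag 039P]{stacks-project}). By quasicompactness of $\Spec(R)$, finitely many such localizations $R_{f_1},\dots,R_{f_t}$ cover $\Spec(R)$, so $(f_1,\dots,f_t)=R$. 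Each $R_{f_i}$ is a domain that is étale over a polynomial ring, hence by the base case and Proposition~\ref{lemma:etale}, $D_{R_{f_i}|K}$ is linearly simple in its generator filtration. Since $R$ is smooth, $D_{R|K}$ is itself generated as a $K$-algebra by $R$ together with $\Der_K(R)$, both of which are finitely generated, so $D_{R|K}$ is a finitely generated $K$-algebra and it makes sense to speak of a generator filtration. Lemma~\ref{lem:finite cover} then assembles the local linear simplicities into linear simplicity of $D_{R|K}$ in the generator filtration.

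The main potential obstacle is the base case: one must know a priori that the Weyl algebra is linearly simple in the generator (equivalently Bernstein) filtration rather than just simple. This is exactly the content being imported from \cite{AHJNTW2}, and once it is invoked the rest of the argument is a formal gluing on a smooth cover using Proposition~\ref{lemma:etale} and Lemma~\ref{lem:finite cover}. A secondary technicality is checking that a convenient covering $(f_1,\dots,f_t)=R$ with each $R_{f_i}$ standard étale over a polynomial ring exists; this is the standard local structure of smooth morphisms in characteristic zero and presents no real difficulty for a finitely generated domain.
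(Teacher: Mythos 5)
Your proposal follows essentially the same route as the paper's proof: reduce to a polynomial ring via the local structure of smooth morphisms, use Lemma~\ref{lem:finite cover} to assemble over a finite cover $(f_1,\dots,f_t)=R$, use Proposition~\ref{lemma:etale} to transfer linear simplicity along the \'etale maps from polynomial rings, and settle the base case by identifying the natural generator filtration on the Weyl algebra with the classical Bernstein filtration and invoking its linear simplicity from \cite{AHJNTW2}. The only cosmetic differences are that the paper cites \cite[Proposition~4.22]{AHJNTW2} directly for linear simplicity of the slope-$2$ Bernstein filtration and notes it is \emph{identical} (not merely linearly equivalent) to the generator filtration on $\{x_i,\partial_i\}$, and that the paper justifies finite generation of $D_{R|K}$ via \cite{SmithStafford}; also, your appeal to Proposition~\ref{prop:simple-filtrations}(3) is superfluous (and goes the wrong direction, generator $\Rightarrow$ Bernstein), since part (1) together with linear equivalence already gives what you need.
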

\begin{proof}
Note first that by \cite[1.4(a)]{SmithStafford} (see also \cite[Chapter 3, Theorem 2.3]{Bjork}, $D_{R|K}$ is finitely generated, and hence it makes sense to discuss the generator filtration.

By the structure theory for smooth morphisms (see, e.g., \cite[\href{https://stacks.math.columbia.edu/tag/054L}{Tag~054L}]{stacks-project}) for every ${\mathfrak{q}\in \Spec(R)}$, there is some $f\notin \mathfrak{q}$ such that $K \to R_f$ factors as an \'etale extension of a polynomial ring over $K$. As in the proof of Lemma~\ref{lemma:etale}, there exist $f_1,\dots f_t$ such that $K \to R_{f_i}$ factors as an \'etale extension of a polynomial ring over $K$ and ${(f_1,\dots,f_t)=R}$.
By Lemma~\ref{lem:finite cover}, we reduce to the case of an \'etale extension of a polynomial ring and by Lemma~\ref{lemma:etale}, we reduce to the case of a polynomial ring over a field of characteristic zero $R=K[x_1,\dots,x_n]$. In this case, the generator filtration on $D_R$ with generating set $\{x_1,\dots,x_n,\partial_1,\dots,\partial_n\}$ is identical to the Bernstein filtration with slope $2$ with the standard grading on $R$; this is then linearly simple by  \cite[Proposition~4.22]{AHJNTW2}.
\end{proof}

\begin{theorem}\label{SBSimpliesLS}
Let $R$ be a finitely generated $\NN$-graded algebra over a field $K$ of characteristic zero. Let $f$ be a nonzerodivisor in the singular locus of $R$. If $f$ admits a sandwich Bernstein-Sato polynomial with no nonnegative integer roots, then $D_{R|K}$ is linearly simple in any Bernstein filtration.
\end{theorem}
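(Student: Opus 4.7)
The strategy is to reduce linear simplicity of $D_{R|K}$ with respect to a Bernstein filtration to the known linear simplicity of $D_{R_f|K}$ coming from smoothness of $R_f$, using the SBS-equation to bridge between them. The plan breaks into three steps, with the hypothesis on $f$ entering each step in a different way.

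First, I would set up linear simplicity on $D_{R_f|K}$. Since $f$ is a nonzerodivisor lying in the singular locus, $\Spec(R_f)$ avoids the singular locus, so $R_f$ is a smooth $K$-algebra. Proposition~\ref{prop:smooth} then gives that $D_{R_f|K}$ is linearly simple in its generator filtration, and Proposition~\ref{prop:simple-filtrations}(4) promotes this to linear simplicity with respect to the localization $\cGb$ at $f$ of any fixed Bernstein filtration $\cBb$ on $D_{R|K}$. Since $\cBb$ is linearly dominated by the order filtration (Lemma~\ref{lem:lindom}), Proposition~\ref{prop:fastsmallmult} applies and yields constants $a, b \in \N$ such that for every $n$ and every nonzero $\delta \in B^n$,
\[
f^{an} \;\in\; B^{bn}\,\delta\, B^{bn}.
\]

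Second, I would use the SBS-equation to sandwich $1$ by a controlled power of $f$. Write the given SBS-equation as $\sum_i \alpha_i(s)\, f^{s+1}\,\beta_i(s) = c(s)\, f^s$, where $c(s)\in K[s]$ has no nonnegative integer root. Let $D$ be a common bound on the Bernstein-filtration degrees of all the $D_{R|K}$-coefficients of the polynomials $\alpha_i(s)$ and $\beta_i(s)$. Because evaluation at $s=n$ only produces $K$-linear combinations of those coefficients, we have $\alpha_i(n), \beta_i(n) \in B^D$ for every $n\in\N$, with $D$ independent of $n$. Evaluating the SBS-equation at any $n \geq 0$ and dividing by the nonzero scalar $c(n)$ produces $f^n \in B^D\, f^{n+1}\, B^D$; iterating this containment $m$ times starting from $1 = f^0$ yields
\[
1 \;\in\; B^{Dm}\, f^m\, B^{Dm} \qquad \text{for every } m \geq 0.
\]

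Third, I would combine the two containments: given a nonzero $\delta \in B^n$, take $m = an$ so that $f^{an} \in B^{bn}\, \delta\, B^{bn}$, and substitute into $1 \in B^{Dan}\, f^{an}\, B^{Dan}$ to get
\[
1 \;\in\; B^{Dan}\, B^{bn}\, \delta\, B^{bn}\, B^{Dan} \;\subseteq\; B^{(Da+b)n}\, \delta\, B^{(Da+b)n}.
\]
This is the required linear simplicity of $(D_{R|K}, \cBb)$ with constant $Da+b$, and by Proposition~\ref{prop:simple-filtrations}(2) it passes to any Bernstein filtration.

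The main obstacle, and the place where the no-nonnegative-integer-roots hypothesis is genuinely needed, is the uniformity of $D$ throughout the iteration in the second step: one must verify that evaluating $\alpha_i(s), \beta_i(s)$ at any $n\in\N$ and rescaling by $1/c(n)$ leaves us inside the same finite-dimensional piece $B^D$ regardless of $n$, since otherwise the iteration would produce a constant that grows with $m$ and would not combine cleanly with the $f^{an}$ estimate. This rests on the polynomial-in-$s$ structure of the SBS-equation and on $c(n)$ being a nonzero scalar for every $n\geq 0$, without which step two collapses.
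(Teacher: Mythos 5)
Your proof is correct and follows essentially the same route as the paper's: localize to $R_f$ where smoothness gives linear simplicity (via Proposition~\ref{prop:smooth} and Proposition~\ref{prop:simple-filtrations}(4)), deduce from Proposition~\ref{prop:fastsmallmult} that $f^{an}\in B^{bn}\delta B^{bn}$, iterate the SBS-equation to obtain $1\in B^{Dm}f^mB^{Dm}$, and combine. The only cosmetic difference is that you spell out the uniformity-of-$D$ point and the appeal to Lemma~\ref{lem:lindom} and Proposition~\ref{prop:simple-filtrations}(2) slightly more explicitly than the paper does, which does not change the argument.
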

\begin{proof}
Let $R$ be a finitely generated $\N$-graded algebra over $K$ and $f$ be an element of the singular locus. Since $R_f$ is a finitely generated $K$-algebra that is regular, $R_f$ is a smooth $K$-algebra. Then by Proposition~\ref{prop:smooth}, $D_{R_f|K}$ is linearly simple in the generator filtration. Fix a Bernstein filtration $\cBb$ on $R$. By Proposition~\ref{prop:simple-filtrations}, the localized Bernstein filtration on $D_{R_f|K}$ is linearly simple. Then by Proposition~\ref{prop:fastsmallmult}, there are constants $a,b$ such that for all $n$ and all nonzero $\delta\in B^n$, one has $f^{an}\in B^{bn} \delta B^{bn}$. Fix a sandwich Bernstein-Sato functional equation
\[ \sum_i \alpha_i(s) f^{s+1} \beta_i(s) = b(s) f^s\]
for which $b(t)\neq 0$ for all $t\in \N$.
Let $m\in \N$ be such that every component of each $\alpha_i(s)$ and $\beta_i(s)$ lies in $B^{m}$. Then applying the functional equation repeatedly one obtains that
\[ 1\in B^{mt} f^t B^{mt}\]
for all $t\in \N$ and hence
\[ 1\in B^{amn} f^{an} B^{amn}.\]
Thus, we have
\[ 1\in B^{amn} f^{an} B^{amn} \subseteq B^{amn} B^{bn} \delta B^{bn} B^{amn} \subseteq B^{(am+b)n} \delta B^{(am+b)n}.\]
It follows that $D_{R|K}$ is linearly simple in the Bernstein filtration.
\end{proof}

We are also able to apply similar methods to give an effective condition for positive differential signature.

\begin{theorem}\label{thm:diffsig} Let $R$ be a finitely generated $\N$-graded algebra over a field $K$ of characteristic zero. Suppose that there exists some nonzerodivisor $f$ in the singular locus of $R$ that admits a Bernstein-Sato polynomial with no nonnegative integer roots. Then $\mathrm{s}_K^\mathrm{diff}(R)>0$.
\end{theorem}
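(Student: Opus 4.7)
The strategy is to combine a ``downward ladder'' of operators coming from the Bernstein-Sato polynomial with partial derivatives available on the smooth locus $R_f$ (which is smooth of dimension $d=\dim R$ since $f$ lies in the singular locus) to produce at least a constant times $n^d$ linearly independent classes in $R/\m^{\langle n\rangle_K}$, giving $\mathrm{s}^{\mathrm{diff}}_K(R)>0$.

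First I would extract the ladder. Specializing $s=0,1,\dots,t-1$ in the Bernstein-Sato equation $\delta(s)\cdot f^{s+1}=b(s) f^s$, the composition $\Delta_t := \delta(0)\circ \delta(1)\circ \cdots\circ \delta(t-1) \in D^{C_0 t}_{R|K}$ satisfies $\Delta_t(f^t)=\prod_{j<t}b(j)\in K^\times$ by the hypothesis on the roots of $b$. After replacing $\delta(s)$ by its homogeneous component of degree $-\deg(f)$ (permissible because both sides of the BS equation are graded), $\Delta_t$ becomes homogeneous of degree $-t\deg(f)$; the $\N$-grading of $R$ then forces $\Delta_t(f^j)=0$ for $j<t$ (negative resulting degree) and $\Delta_t(f^j)\in\m$ for $j>t$ (strictly positive resulting degree). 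In particular, the functionals $g\mapsto \mathrm{const}(\Delta_t(g))$ for $t \leq n/C_0$ form a system distinguishing the powers $\{f^t\}_{t\leq n/C_0}$, which already gives the linear lower bound $\dim_K R/\m^{\langle n\rangle_K}\geq \lfloor n/C_0\rfloor+1$.

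To amplify this to order $n^d$, I would use smoothness of $R_f$. Choose a smooth maximal ideal $\n \not\ni f$ of $R_f$ equipped with a regular system of parameters $y_1,\dots,y_d$ (possibly after an \'etale extension, descended through Proposition~\ref{lemma:etale}) whose partial derivatives $\partial_{y_i}\in D^1_{R_f|K}$ can be written as $\eta_i/f^{k_i}$ with $\eta_i\in D^1_{R|K}$ via Lemma~\ref{Localizing Things}(1); iterated compositions yield $\partial^\alpha = \eta_\alpha/f^{k_\alpha}$ with $\eta_\alpha\in D^{|\alpha|}_{R|K}$ and $k_\alpha = O(|\alpha|)$. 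For a uniform clearing power $M=O(n)$, define $g_\alpha := f^M y^\alpha\in R$ and $\phi_\alpha := \Delta_{M+k_\alpha}\circ \eta_\alpha\in D^{O(n)}_{R|K}$. A Leibniz-rule computation then shows $\mathrm{const}(\phi_\alpha(g_\beta))$ equals a nonzero scalar multiple of $\alpha!$ when $\alpha=\beta$, vanishes when $\beta<\alpha$ since $\partial^\alpha(y^\beta)=0$, and is killed when $\beta>\alpha$ via the degree-lowering behavior of $\Delta_{M+k_\alpha}$ established in the previous paragraph; this produces at least a constant times $n^d$ linearly independent classes in $R/\m^{\langle Cn\rangle_K}$.

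The main obstacle is the precise bookkeeping of the amplification step: establishing $k_\alpha = O(|\alpha|)$ (as opposed to a naive exponential bound arising from iterated localization, which must be controlled along the lines of Lemma~\ref{lem:loc-ber-const}), choosing a single clearing power $M$ uniformly across all relevant $\alpha$, and verifying that the Leibniz cross-terms $\partial^\gamma(y^\beta)\partial^{\alpha-\gamma}(f^M)$ for intermediate $\gamma$ do not spoil the triangular structure of the distinguishing matrix $(\mathrm{const}(\phi_\alpha(g_\beta)))$. A secondary subtlety is descending the regular parameters $y_i$ from a possibly \'etale cover of $R_f$ back to operators on $R$, which is handled by Propositions~\ref{lemma:etale} and~\ref{prop:smooth}.
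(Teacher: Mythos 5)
Your proposal takes a genuinely different route from the paper, and the amplification step has real gaps that are not merely bookkeeping.

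The paper's argument is much shorter and avoids the difficulties you encounter. It proceeds as follows. From smoothness of $R_f$, Proposition~\ref{prop:fastsmallmult} produces constants $a,b$ such that for any nonzero $r\in B^n$ (in particular any nonzero $r\in [R]_{\leq n}$), one has $f^{an}\in B^{bn}\,r\,B^{bn}$, i.e., $f^{an}=\sum_i\alpha_i\,r\,\beta_i$ in $D_{R|K}$ with $\alpha_i,\beta_i$ of order $O(n)$. Applying both sides to $1\in R$ converts the two-sided expression into a one-sided one: $f^{an}\in D^{Ln}_{R|K}\cdot r$ for some constant $L$. The Bernstein-Sato equation with no nonnegative integer roots then gives $1\in D^{mt}_{R|K}\cdot f^t$ for all $t$, and composing yields $1\in D^{Cn}_{R|K}\cdot r$ for a uniform constant $C$. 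Since $\m^{\langle Cn+1\rangle_K}$ is a graded ideal, this containment forces $\m^{\langle Cn+1\rangle_K}\subseteq [R]_{>n}$, and the required $n^{\dim R}$ growth of $\dim_K(R/\m^{\langle Cn+1\rangle_K})$ then comes for free from the Hilbert function of $R$ --- no explicit family of $\sim n^d$ distinguishing functionals is ever constructed.

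Your first step (the ladder $\Delta_t$ and the resulting linear lower bound) is essentially a one-sided version of the ingredients above and is fine as far as it goes. The amplification, however, has genuine gaps. First, $\eta_\alpha(f^M y^\beta)$ is not a pure power of $f$: Leibniz spreads it across terms $\partial^\gamma(f^M)\partial^{\alpha-\gamma}(y^\beta)$, so the ladder $\Delta_t$ --- which only has controlled behavior on monomials $f^j$, and only for homogeneous $f$ --- cannot be applied. Indeed, neither $y^\beta$ nor $g_\beta=f^My^\beta$ is homogeneous, since the $y_i$ are local parameters at a closed point, so the degree-lowering argument for $\Delta_t$ does not apply to $\phi_\alpha(g_\beta)$ at all. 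Second, the claimed triangularity of the matrix $\bigl(\mathrm{const}(\phi_\alpha(g_\beta))\bigr)$ is unsupported: multi-indices are only partially ordered, and the off-diagonal Leibniz cross-terms $\partial^\gamma(f^M)\partial^{\alpha-\gamma}(y^\beta)$ for $0<\gamma<\alpha$ contribute on both sides of any purported diagonal. Third, your homogeneous-component reduction on $\delta(s)$ implicitly assumes $f$ is homogeneous, which the theorem does not hypothesize and which the paper's proof does not require. The paper's route sidesteps all three issues by reducing to a containment of graded ideals rather than constructing an explicit nondegenerate pairing.
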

\begin{proof}
As in the proof of Theorem~\ref{SBSimpliesLS}, by the hypotheses, we have that for some Bernstein filtration $\cBb$ of $D_{R|K}$, there are constants $a,b$ such that for all nonzero $\delta\in B^n$, we have $f^{an} \in B^{bn} \delta B^{bn}$. In particular, if $r\in R$ is a nonzero element of degree at most $n$, then $r\in B^n$, and we can write
\[ f^{an} = \sum_i \alpha_i r \beta_i\]
in $D_{R|K}$ with $\alpha_i,\beta_i\in D^{Ln}_{R|K}$ for some constant $L$ independent of $n$. Applying both sides as operators to $1\in R$, we get
\[ f^{an} = \sum_i \alpha_i r \beta_i(1) = \sum_i \alpha_i (r \beta_i(1)) = \sum_i (\alpha_i \overline{\beta_i(1)})(r),\]
where $\overline{\beta_i(1)}$ denotes the operator of order zero corresponding to multiplication by $\beta_i(1)$.
Note that $\sum_i \alpha_i \overline{\beta_i(1)} \in D^{Ln}$.

Now, fix a Bernstein-Sato function equation for $f$ with no nonnegative integer roots and let $m$ be such that each coefficient of the operator in the expression has order at most $m$. Applying this functional equation repeatedly, one finds that 
\[1\in D^{mt} \cdot f^t\]
for all $t\in\N$. Thus, given a nonzero element $r\in R$ of degree at most $n$, we have
\[ 1\in D^{amn} \cdot f^{an} \subseteq D^{amn} D^{Ln} \cdot r = D^{(am+L)n} \cdot r.\]
It follows that 
\[ \mathfrak{m}^{\langle (am+L)n+1\rangle_K} \subseteq [R]_{>n}\quad \text{and hence} \quad \ell(R/\mathfrak{m}^{\langle (am+L)n+1\rangle_K}) \geq \ell([R]_{\leq n})\]
for all $n\in \N$. Then the conclusion follows from a basic comparison with the Hilbert function.
\end{proof}

\section{Comparison of Bernstein-Sato polynomials and sandwich Bernstein-Sato polynomials}

In this section, we compare the Bernstein-Sato polynomials of elements with sandwich-Bernstein-Sato polynomials. The first thing we note is that the sandwich-Bernstein-Sato polynomial, when it exists, is always a multiple of the standard Bernstein-Sato polynomial. We recall that we denote the sandwich Bernstein-Sato polynomial of $f$ by $\SBS{f}{R}(s)$; we denote the standard Bernstein-Sato polynomial of $f$ by $\BS{f}{R}(s)$.

\begin{proposition}\label{prop-divides}
Let $R$ be a $K$-algebra where $K$ is a field of characteristic zero, and let $f \in R$. Then $\BS{f}{R}(s)$ divides $\SBS{f}{R}(s)$.
\end{proposition}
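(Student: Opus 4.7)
The plan is to take any sandwich Bernstein-Sato functional equation and evaluate both sides at the element $1 \in R$, viewed as elements of $R$ under the module action, and thereby convert it into an ordinary Bernstein-Sato functional equation with the same polynomial $b(s)$. Concretely, suppose we have
\[ \sum_i \alpha_i(s) f^{s+1} \beta_i(s) = b(s) f^s \]
as an SBS-equation, where the LHS is a composition of operators in $D_{R|K}$ after evaluating $s=n$. For each $i$, set $\widehat{\beta_i}(s) := \beta_i(s)(1) \in R[s]$, which makes sense because $\beta_i(s)\in D_{R|K}[s]$ acts $s$-coefficientwise on $R[s]$. Letting $M_{r}$ denote the order-zero operator ``multiplication by $r$,'' define $\gamma_i(s) := \alpha_i(s) \circ M_{\widehat{\beta_i}(s)} \in D_{R|K}[s]$.

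The next step is to apply both sides of the SBS-equation to the constant $1\in R$ at each $s=n\in\NN$. The right-hand side produces $b(n) f^n$. On the left-hand side, for each $n$ we compute
\[ \alpha_i(n)\big( f^{n+1}\,\beta_i(n)(1) \big) = \alpha_i(n) \circ M_{\widehat{\beta_i}(n)}(f^{n+1}) = \gamma_i(n)\big( f^{n+1} \big). \]
Summing, the equality
\[ \sum_i \gamma_i(n)\cdot f^{n+1} = b(n) f^n \qquad \text{in } R_f \]
holds for every $n\in\NN$. By Lemma~\ref{lem:pin-determine} (applied to the difference of the two sides in $D_{R_f|K}[s]\fs$, or equivalently by the polynomial identity principle together with the nonzerodivisor reasoning of that lemma), the identity in fact holds as a formal equation in the bimodule/module sense, hence for all $n\in \ZZ$. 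Thus $\sum_i \gamma_i(s)\cdot f^{s+1} = b(s) f^s$ is a bona fide Bernstein-Sato functional equation for $f$ with polynomial $b(s)$.

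Since $b(s) = \SBS{f}{R}(s)$ therefore lies in the ideal of polynomials admitting a (standard) Bernstein-Sato functional equation for $f$, and since $\BS{f}{R}(s)$ is by definition the monic generator of that ideal, we conclude $\BS{f}{R}(s)\mid \SBS{f}{R}(s)$. (The case $\SBS{f}{R}(s)=0$ is vacuous.) The only mildly delicate point is the bookkeeping in converting the polynomial operator $\alpha_i(s)\circ M_{\widehat{\beta_i}(s)}$ into an honest element of $D_{R|K}[s]$, and the check that the resulting identity, verified at all natural numbers, extends to a formal identity over $s$; both amount to polynomiality in $s$ and are handled directly by the machinery of $D_{R_f|K}[s]\fs$ already set up in Section~3.
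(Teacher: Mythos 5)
Your proposal is correct and matches the paper's proof: both evaluate the sandwich functional equation at $1\in R$, absorb $\beta_i(s)(1)$ into a multiplication operator on the right of $\alpha_i(s)$, and thereby produce an ordinary Bernstein--Sato functional equation for the same $b(s)$. You take slightly more care than the paper in extending the identity from $n\in\NN$ to $n\in\ZZ$; note that Lemma~\ref{lem:pin-determine} as stated concerns the bimodule $D_{R_f|K}[s]\fs$ rather than the evaluated equation in $R_f$, so your parenthetical fallback to the polynomial identity principle (a polynomial over $R_f$ vanishing at infinitely many integers is zero) is the cleaner justification for that step.
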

\begin{proof}
Let $\SBS{f}{R}(s) = b(s)$, and let $\sum_i \alpha_i(s) f^{s+1} \beta_i(s) = b(s)f^s$ be a Sandwich-Bernstein-Sato functional equation corresponding to $b(s)$. This equation is an equality of operators in $D_{R|K}$, so we can evaluate both sides at $1\in R$ and rearrange\footnote{To add clarity, if $r$ is a ring element of $R$ we denote the ``multiply by $r$'' operator in $D_{R|K}$ as $\overline{r}$.}:
\begin{align*}
    \left(\sum_i \alpha_i(s) \overline{f^{s+1}} \beta_i(s)\right)(1)            &=  \overline{b(s)f^s}(1)\\
    \sum_i \alpha_i(s) \left(f^{s+1} \cdot\beta_i(s)(1)\right)                  &=  (b(s)f^s)\cdot 1\\
    \sum_i \alpha_i(s)\overline{\beta_i(s)} \left(f^{s+1}\right)                &=  \overline{b(s)}(f^s)\\
    \left(\sum_i \alpha_i(s)\overline{\beta_i(s)}\right) \left(f^{s+1}\right)   &=  \overline{b(s)}(f^s)
\end{align*}
This yields a Bernstein-Sato functional equation: with $\delta(s) = \sum_i \alpha_i(s)\overline{\beta_i(s)}$ and $b(s)$ as above, the following equality is true when considered as an equation in $R$ by evaluating the left-hand-side at $f^{s+1}$: $\delta(s)f^{s+1} = b(s) f^s$. This means that $b(s)$ is an element of the ideal  generated by $\BS{f}{R}(s)$ in  $K[s]$. Therefore $\BS{f}{R}(s)$ divides $\SBS{f}{R}(s)$.
\end{proof}

We can now state a simple criterion for Bernstein's inequality in terms of sandwich Bernstein-Sato equations.

\begin{corollary}\label{thm-main-BI}
Let $R$ be a finitely generated $\N$-graded algebra over a field $K$ of characteristic zero. Suppose that there exists some nonzerodivisor $f$ in the singular locus of $R$ that admits a sandwich Bernstein-Sato polynomial with no nonnegative integer roots. Then $R$ is a Bernstein algebra; in particular, if $M$ is nonzero $D_R$-module equipped with a filtration $\cFb$ compatible with some Bernstein filtration on $D_R$, then $\dim(\cFb)\geq \dim(R)$.
\end{corollary}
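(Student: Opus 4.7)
The plan is to stitch together the results already established in the excerpt: the hypothesis should give both linear simplicity of $D_{R|K}$ in a Bernstein filtration and positivity of the differential signature, and these two together identify $R$ as a Bernstein algebra, from which the dimension inequality is immediate.

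First, Theorem~\ref{SBSimpliesLS} applies verbatim: given a nonzerodivisor $f$ in the singular locus whose sandwich Bernstein-Sato polynomial has no nonnegative integer roots, one concludes that $D_{R|K}$ is linearly simple with respect to any Bernstein filtration $\cBb$. This gives the linear simplicity half of the Bernstein algebra conditions.

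Second, I would pass from the sandwich to the standard Bernstein-Sato polynomial via Proposition~\ref{prop-divides}: we know that $\BS{f}{R}(s)$ divides $\SBS{f}{R}(s)$, so any root of the former is a root of the latter. In particular, since $\SBS{f}{R}(s)$ has no nonnegative integer roots by hypothesis, the same holds for $\BS{f}{R}(s)$. Now Theorem~\ref{thm:diffsig} applies and yields $\mathrm{s}^{\mathrm{diff}}_K(R)>0$.

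Third, with linear simplicity of $(D_{R|K},\cBb)$ and $\mathrm{s}^{\mathrm{diff}}_K(R)>0$ in hand, Proposition~\ref{prop:posdiff} asserts that $R$ is a Bernstein algebra. The ``in particular'' clause then follows directly from Remark~\ref{rem:BIintro}: in a Bernstein algebra, every nonzero $D_R$-module equipped with a filtration compatible with a Bernstein filtration has dimension at least $\dim(R)$.

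There is no real obstacle in this corollary; it is a synthesis of Theorems~\ref{SBSimpliesLS} and~\ref{thm:diffsig} with Propositions~\ref{prop-divides} and~\ref{prop:posdiff}. The only subtle point worth verifying is the reduction from sandwich to standard Bernstein-Sato roots (to feed Theorem~\ref{thm:diffsig}), but this is precisely Proposition~\ref{prop-divides}, which already appears earlier in Section~6 of the excerpt.
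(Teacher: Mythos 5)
Your proposal matches the paper's proof step for step: Theorem~\ref{SBSimpliesLS} for linear simplicity of the Bernstein filtration, Proposition~\ref{prop-divides} to transfer the no-nonnegative-integer-roots condition from $\SBS{f}{R}$ to $\BS{f}{R}$, Theorem~\ref{thm:diffsig} for positive differential signature, Proposition~\ref{prop:posdiff} to conclude $R$ is a Bernstein algebra, and Remark~\ref{rem:BIintro} for the dimension inequality. This is exactly the argument the paper gives, so there is nothing to add.
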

\begin{proof} By Theorem~\ref{SBSimpliesLS}, any Bernstein filtration on $D_R$ is linearly simple. By Proposition~\ref{prop-divides}, $f$ also satisfies a Bernstein-Sato polynomial with no nonnegative integer roots. By Theorem~\ref{thm:diffsig}, $R$ has positive differential signature. Then, by Proposition~\ref{prop:posdiff}, $R$ is a Bernstein algebra.
\end{proof}

\begin{proposition}\label{prop-equal}
Let $K$ be a field of characteristic zero and $R$ be a $K$-algebra. Suppose that $D_{R|K}$ is generated as a ring by $D^1_{R|K}$.  Then $\BS{f}{R}(s)=\SBS{f}{R}(s)$ for all $f\in R$.

In particular, for $R=K[x_1,\dots,x_n]$, Bernstein-Sato polynomials and sandwich-Bernstein-Sato polynomials coincide.
\end{proposition}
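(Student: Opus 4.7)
The plan is to prove the reverse divisibility $\SBS{f}{R}(s) \mid \BS{f}{R}(s)$, which combined with Proposition~\ref{prop-divides} gives equality. Concretely, starting from any classical Bernstein-Sato equation $\delta(s) f^{s+1} = b(s) f^s$, I will construct a sandwich Bernstein-Sato equation with the \emph{same} polynomial $b(s)$.

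The heart of the argument is the bracket identity
\[
[D_1, [D_2, \ldots, [D_k, \overline{g}]\cdots]] \;=\; \overline{D_1 D_2 \cdots D_k(g)},
\]
valid for any $D_1,\dots,D_k \in \Der_K(R)$ and $g \in R$, where $\overline{h}$ denotes the order-zero operator ``multiplication by $h$'' and $D_1 D_2 \cdots D_k$ is composition. This follows by induction on $k$ from the Leibniz rule $[D,\overline{g}] = \overline{D(g)}$. The crucial point is that, upon expanding each commutator as $[A,B] = AB - BA$, every such iterated bracket becomes a $\Z$-linear combination of ``sandwich'' expressions $\alpha\,\overline{g}\,\beta$ with $\alpha,\beta \in D_{R|K}$ products of the $D_i$'s.

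Next, I invoke the hypothesis that $D_{R|K}$ is generated as a ring by $D^1_{R|K}$. Any $\theta \in D^1$ decomposes as $\theta = (\theta-\theta(1)) + \theta(1)$, a derivation plus a multiplication operator, so $D^1_{R|K} = \Der_K(R) + R$; moreover the commutation rule $D\cdot r = r\cdot D + D(r)$ permits pushing all multiplications to the left. Therefore every element of $D_{R|K}$, and in particular $\delta(s)$, admits a ``normal form''
\[
\delta(s) \;=\; \sum_\alpha c_\alpha(s)\, r_\alpha\, D_{\alpha,1}\cdots D_{\alpha,j_\alpha},\qquad c_\alpha(s)\in K[s],\ r_\alpha\in R,\ D_{\alpha,i}\in \Der_K(R).
\]

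With these pieces in place, for each $s=n\in\N$ the classical equation gives $\delta(n)(f^{n+1}) = b(n) f^n$ as elements of $R$, and the bracket identity then converts this to an operator identity in $D_{R|K}$:
\[
b(s)\, \overline{f^s} \;=\; \overline{\delta(s)(f^{s+1})} \;=\; \sum_\alpha c_\alpha(s)\, \overline{r_\alpha}\,\bigl[D_{\alpha,1}, \bigl[\ldots,[D_{\alpha,j_\alpha}, \overline{f^{s+1}}]\cdots\bigr]\bigr].
\]
The right-hand side becomes a sandwich in $\overline{f^{s+1}}$ after expanding the iterated brackets and absorbing each $\overline{r_\alpha}$ into the outermost left factor; this exhibits $b(s)$ as a sandwich Bernstein-Sato polynomial and completes the divisibility. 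The main obstacle is simply recognizing the interplay between the $D^1$-normal form and the bracket identity; once these are in hand the proof is essentially a computation. The polynomial-ring statement is then immediate, since the Weyl algebra is generated over $K$ by the coordinate variables and partial derivatives, all of which lie in $D^1$.
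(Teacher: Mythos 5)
Your proposal is correct and takes essentially the same approach as the paper's proof: both use Proposition~\ref{prop-divides} for one divisibility, write the operator from a classical Bernstein-Sato equation in a normal form with multiplication operators pushed to the left and derivations to the right (using the hypothesis $D_{R|K}=K[D^1_{R|K}]$ together with $D^1=\Der_K(R)+R$), and replace each string of derivations applied to $f^{s+1}$ by the iterated commutator $[\theta_{j,1},[\cdots,[\theta_{j,d_j},\overline{f}^{\,s+1}]\cdots]]$, which upon expansion is a signed sum of sandwich terms. This gives a sandwich functional equation with the same polynomial $b(s)$, which is exactly the paper's argument.
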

\begin{proof}
We already have that $\BS{f}{R}(s)$ divides $\SBS{f}{R}(s)$. Let $b(s)=\BS{f}{R}(s)$, and take a Bernstein-Sato functional equation
\[ P(s) \cdot f^{s+1} = b(s) f^s, \quad \text{so} \quad P(t) \cdot f^{t+1} = b(t) f^t \ \text{in} \ R_f \ \text{for all} \  t\in \Z. \]
It suffices to exhibit a sandwich functional equation with the same polynomial $b(s)$. For clarity, we will write $\overline{r}$ for the element of $D^0_{R|K}$ given by multiplication by $r$.
Since $D_{R|K}$ is generated by $D^1_{R|K}$, any element of $D_{R|K}$ can be written as a sum of products of multiplication operators and derivations. Using the relations $\theta \overline{r} =  \overline{r} \theta + \overline{\theta(r)}$ for $\overline{r}\in D^0_{R|K}$ and $\theta\in \Der_{R|K}$, for any $t\in \Z$, we can write
\[ P(t) = \sum_j r_j \theta_{j,1} \cdots \theta_{j,d_j} t^{e_j}\]
for some $\overline{r_j}\in D^0_{R|K}$ and $\theta_{j,i}\in \Der_{R|K}$. But, again using that $[\theta,\overline{r}] = \overline{\theta(r)}$ for $\overline{r}\in D^0_{R|K}$ and $\theta\in \Der_{R|K}$, we have
\[ \sum_j r_j t^{e_j} \left[ \theta_{j,1} \left[ \cdots \left[ \theta_{j,d_j}, \overline{f}^{t+1} \right] \cdots \right] \right] = \overline{(P(t) \cdot f^{t+1})} \]
in $D_{R|K}$. We then have
\[ \sum_j r_j t^{e_j} \left[ \theta_{j,1} \left[ \cdots \left[ \theta_{j,d_j}, \overline{f}^{t+1} \right] \cdots \right] \right] = \overline{b(t) f^t} = b(t) \overline{f}^t\]
in $R_f$ for all $t\in \Z$, so
\[ \sum_j r_j s^{e_j} \left[ \theta_{j,1} \left[ \cdots \left[ \theta_{j,d_j}, \overline{f}^{s+1} \right] \cdots \right] \right] = b(s) \overline{f}^s\]
 is a sandwich Bernstein-Sato functional equation with polynomial $b(s)$.
\end{proof}

\begin{remark}
    It follows from the previous proposition that if some Bernstein-Sato polynomial and sandwich Bersntein-Sato polynomial disagree for an element, then the ring of differential operators is not generated by derivations and multiplication maps.
\end{remark}

However, in singular rings, the two polynomial can differ.

\begin{example}\label{ex-f=y}
Consider the element $f=y$ in $R=\mathbb{C}[x^2,x^3,y,xy]$. Every differential operator on $S=\mathbb{C}[x,y]$ that preserves the subring $R$ is a differential operator on $R$. Indeed, $\Frac(R)=\Frac(S)$, every operator on $S$ extends uniquely to an operator on $\Frac(S)$ by \ref{Localizing Things}(1), and every operator on $\Frac(R)$ that preserves $R$ restricts to an operator on $R$ by \cite[Corollary~2.2.6]{Masson}. Writing 
\[ R = \bigoplus_{\substack{i,j\geq 0\\ (i,j) \neq (1,0)}} K \cdot x^i y^j,\]
we have 
\[ (1- x \partial_x) \partial_y (R) \subseteq (1- x \partial_x) \partial_y 
 \Big(\bigoplus_{\substack{i,j\geq 0\\ (i,j) \neq (1,0)}}  K \cdot x^i y^j \Big) \subseteq \bigoplus_{\substack{i,j\geq 0\\ i \neq 1}} K \cdot x^i y^j \subseteq R, \]
so $(1- x \partial_x) \partial_y$ is a valid differential operator on $R$.

We observe that there is a Bernstein-Sato functional equation
\[ (1- x \partial_x) \partial_y \cdot y^{s+1} = (s+1) y^s,\]
so $\BS{f}{R}(s) = s+1$.
However, $s$ divides $\SBS{y}{R}(s)$. Indeed, there is a two-sided ideal $\cJ$ of $D_{R|\mathbb{C}}$ that contains $y$ but not $1$, since $H^1_{(x^2,x^3,y,xy)}(R)\cong S/R\cong \mathbb{C}$, which has annihilator $(x^2,x^3,y,xy)$ as an $R$-module; the annihilator of $H^1_{(x^2,x^3,y,xy)}(R)$ as a $D_{R|\mathbb{C}}$ is a two-sided ideal. 
Thus, given an SBS-functional equation
\[ \sum_i \alpha_i(s) y^{s+1} \beta_i(s) = b(s) y^s,\]
evaluating at $s=0$, the left-hand side is in $\cJ$, and since $y^0=1$, we must have $b(0)=0$.

Thus $\BS{f}{R}(s)\neq \SBS{f}{R}(s)$ in this example.
\end{example}

To generate more examples in which the two notions differ, we now establish a method for detecting roots of sandwich-Bernstein-Sato polynomials via reduction modulo $p$. The following is a two-sided analogue of the result of Musta\c{t}\u{a}, Takagi, and Watanabe \cite{MTW}.

\begin{lemma} 
Let $T$ be a finitely generated $\Z$-algebra and $R=\Q \otimes_{\ZZ} T$. Let $f \in T$, and $b(s)\in \Q[s]$ be a polynomial of a sandwich Bernstein-Sato functional equation for the image of $f$ in $R$. 
Then there exists $N\in \N$ such that for any prime $p>N$, and any two sided ideal $\cJ \subset \End_{(T/pT)^p}(T/pT)$, we have that
\[ f_p^{v} \notin \cJ\quad \text{and}\quad  f_p^{v+1} \in \cJ \quad \text{implies} \quad b(v) \equiv 0 \text{ modulo } p,\]
where $f_p$ denotes map of multiplication by the image of $f$ modulo $p$. 
\end{lemma}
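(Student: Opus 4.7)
The plan is to reduce the hypothesized sandwich Bernstein-Sato equation modulo $p$ for large $p$, evaluate it at $s = v$, and read off the conclusion from the two-sided ideal property of $\cJ$ together with the fact that nonzero elements of $\F_p$ are units in $\End_{(T/pT)^p}(T/pT)$.

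I would begin by fixing a sandwich Bernstein-Sato equation
$$\sum_i \alpha_i(s)\, f^{s+1}\, \beta_i(s) = b(s)\, f^s$$
with $\alpha_i(s), \beta_i(s) \in D_{R|\Q}[s]$ realizing the given $b(s)$. Only finitely many operator coefficients occur in the $\alpha_i(s), \beta_i(s)$, and each is a $\Q$-linear combination of operators coming from $D_{T|\Z}$; likewise $b(s)$ has only finitely many rational coefficients. I would then let $M$ be a positive integer clearing all denominators involved, let $d$ upper-bound the orders of the coefficients of $\alpha_i(s), \beta_i(s)$, and choose $N$ so that every prime $p > N$ satisfies both $p \nmid M$ and $p > d + 1$.

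For such a prime $p$, Proposition~\ref{prop:charpdiff}(1) supplies an order-preserving ring homomorphism $D_{T|\Z} \to D_{T/pT|\F_p}$ factoring through $\F_p \otimes_\Z D_{T|\Z}$. Under this map, the sandwich equation (cleared of denominators and specialized at $s = v \in \N$) reduces to an identity
$$\sum_i \bar\alpha_i(v)\, f_p^{v+1}\, \bar\beta_i(v) = \bar b(v)\, f_p^v$$
in $D_{T/pT|\F_p}$. Because each of $\bar\alpha_i(v), \bar\beta_i(v)$ has order at most $d < p-1$, Proposition~\ref{prop:charpdiff}(2) places them inside $\End_{(T/pT)^p}(T/pT)$, so the identity actually lives in this endomorphism ring.

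To finish, I would use that $\cJ$ is two-sided: given $f_p^{v+1} \in \cJ$, every summand on the left is in $\cJ$, whence $\bar b(v)\, f_p^v \in \cJ$. Since nonzero elements of $\F_p$ act as units in $\End_{(T/pT)^p}(T/pT)$, the hypothesis $f_p^v \notin \cJ$ forces $\bar b(v) = 0$, i.e., $b(v) \equiv 0$ modulo $p$. The main technical obstacle will be the denominator-clearing bookkeeping that legitimizes reducing the functional equation modulo $p$ and situates the reduced operators in the correct endomorphism ring; once that setup is in place, the two-sided ideal property of $\cJ$ gives the conclusion in essentially one line.
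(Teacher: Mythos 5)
Your proposal is correct and follows essentially the same route as the paper's proof: clear denominators, bound the orders of the operator coefficients, take $N$ large enough that the reduction mod $p$ lands the coefficient operators inside $\End_{(T/pT)^p}(T/pT)$ via Proposition~\ref{prop:charpdiff}, and then exploit the two-sided ideal property of $\cJ$ to conclude $b(v) \equiv 0 \pmod p$. The only cosmetic difference is that you take $p > d+1$ where the paper takes $p > n$ (so $n \le p-1$), both of which suffice.
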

\begin{proof}
Let $R$, $T$, and $f$ be as above. Fix a two-sided functional equation for $f$ over $D_{R|\Q}$. Since differential operators for finitely generated algebras localize, we have $D_{R|\Q}\cong \Q\otimes_{\Z} D_{T|\Z}$, so the functional equation can be written as
\[ \sum_i \frac{1}{m_1}\alpha_i(s) f^{s+1} \beta_i(s) = \frac{1}{m_2} a(s) f^s\]
with $\alpha_i,\beta_i\in D^n_{T|\Z}[s]$, $a(s) = m_2 b(s)\in \Z[s]$, and $m_1,m_2\in \Z$. We claim that the number ${N=\max\{m_1,m_2,n\}}$ satisfies the statement.

Given $v\in \N$, since $m_1,m_2$ are units in $\F_p$, by Proposition~\ref{prop:charpdiff}(1) we can evaluate the equation at $s=v$ and reduce modulo $p$ to obtain an equation in $D_{T/pT|\F_p}$:
\begin{equation}\label{eq:2sided} \sum_i \frac{1}{m_1}\alpha_i(v) f_p^{v+1} \beta_i(v) = \frac{1}{m_2} a(v) f_p^v,\end{equation}
where $f_p$ denotes the image of $f$ in $T/pT= D^0_{T/pT}$.
Note that, by Proposition~\ref{prop:charpdiff}(2) \[ \alpha_i(v),\beta_i(v)\in D^{n}_{T/pT|\F_p} \subseteq D^{p-1}_{T/pT|\F_p} \subseteq \End_{(T/pT)^p}(T/pT).\]
Thus, if $f_p^{v+1}\in \cJ$, the left-hand side of \eqref{eq:2sided} is in $\cJ$; since $f_p^{v}\notin \cJ$, we must have $a(v)=0$ in $\F_p$, which implies $b(v)=0$ in $\F_p$.
\end{proof}

\begin{proposition}\label{prop:charp-to-roots}
Let $T$ be a finitely generated $\Z$-algebra and $R=\Q \otimes_{\ZZ} T$. Let $f \in T$, and $b(s)\in \Q[s]$ be a polynomial of a sandwich Bernstein-Sato functional equation for the image of $f$ in $R$.

Suppose that there exists a polynomial $r(s)\in \Q[s]$ and an infinite set of prime integers $P$ such that for every $p\in P$, there is a two-sided ideal $\cJ_p$ in $\End_{(T/pT)^p}(T/pT)$ such that \[r(p)\in \ZZ_{\geq 0}, \ f_p^{r(p)} \notin \cJ_p, \ \text{and} \ f_p^{r(p)+1} \in \cJ_p,\]
where the elements $f_p^{r(p)}, f_p^{r(p)+1}\in \End_{(T/pT)^p}(T/pT)$ denote the multiplication maps by the respective ring elements.
Then $r(0)$ is a root of $b(s)$.
\end{proposition}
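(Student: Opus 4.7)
The idea is to feed each prime $p\in P$ into the previous lemma and then argue that a polynomial in $\Q[s]$ whose values at infinitely many primes are divisible by those primes must vanish at $0$.

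First, I would apply the lemma immediately preceding this proposition to the given sandwich Bernstein-Sato functional equation for $f$. This produces a constant $N\in\N$ such that for any prime $p>N$ and any two-sided ideal $\cJ\subseteq\End_{(T/pT)^p}(T/pT)$, the implication $f_p^{v}\notin \cJ$ and $f_p^{v+1}\in \cJ \Rightarrow b(v)\equiv 0\pmod p$ holds (for $v\in\N$). For each $p\in P$ with $p>N$, the hypothesis $r(p)\in\Z_{\geq 0}$ together with $f_p^{r(p)}\notin \cJ_p$ and $f_p^{r(p)+1}\in \cJ_p$ are exactly what is required to invoke the lemma with $v=r(p)$, giving $b(r(p))\equiv 0\pmod p$ for infinitely many primes.

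Next, I would make the congruence precise by clearing denominators. Pick $M\in\Z_{>0}$ so that $a(s):=M\,b(s)\in \Z[s]$; examining the proof of the preceding lemma, the congruence $b(r(p))\equiv 0\pmod p$ for $p>M$ means precisely that $p\mid a(r(p))$ as integers (and $a(r(p))\in\Z$ because $r(p)\in\Z$). Now set $c(s):=a(r(s))\in\Q[s]$ and choose $D\in\Z_{>0}$ with $Dc(s)\in\Z[s]$. Writing $Dc(s)=\sum_i e_i s^i$ with $e_i\in\Z$, reduction modulo $p$ gives $Dc(p)\equiv e_0 = Dc(0)\pmod p$. For every $p\in P$ with $p>\max\{N,M,D\}$ we therefore have $p\mid Dc(p)$ and hence $p\mid Dc(0)$.

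Since $Dc(0)$ is a fixed integer divisible by infinitely many distinct primes, it must vanish, so $c(0)=0$. Unwinding, $c(0)=a(r(0))=M\,b(r(0))=0$, and therefore $b(r(0))=0$, as desired. There is no real obstacle here beyond careful bookkeeping of denominators; the substantive content is entirely absorbed by the preceding lemma, and the remaining step is the standard observation that a rational polynomial whose integer values at infinitely many primes $p$ are divisible by $p$ must vanish at $0$.
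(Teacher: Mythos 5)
Your proof is correct and follows essentially the same route as the paper: feed $v=r(p)$ into the preceding lemma to get $b(r(p))\equiv 0\pmod p$, observe $b(r(p))\equiv b(r(0))\pmod p$, and conclude $b(r(0))=0$ from divisibility by infinitely many primes. The paper handles the denominators with a single WLOG (the denominators of the coefficients of $r(s)$ are less than $N$), whereas you carry them through explicitly via $a(s)=Mb(s)$ and $Dc(s)\in\Z[s]$; the substance is identical.
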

\begin{proof}
Fix a number $N$ as in the conclusion of the previous proposition; without loss of generality, suppose that every denominator of each coefficient of $r(s)$ is less than $N$.

For any prime $p\in P$ larger than $N$, by the proposition, we have $b(r(p)) \equiv 0 \pmod{p}$. Since $b(r(p))\equiv b(r(0)) \pmod{p}$, we obtain that $b(r(0))\equiv 0 \pmod{p}$ for infinitely many prime numbers $p$. This implies that that the numerator of $b(r(0))$ is divisible by infinitely many primes, which implies that $b(r(0))=0$.
\end{proof}

\begin{example}
Let $R=K[x^2,xy,y^2]$ and consider $f=xy$ in $R$.
 We claim that for $K$ of characteristic zero, $\BS{f}{R}(s)=(s+1)^2$ and $\SBS{f}{R}(s) = (s+1)^2 (s+2)$.

    To see that $\BS{f}{R}(s)=(s+1)^2$, we observe that $R$ is a differentially extensible direct summand of $S=K[x,y]$ in the sense of \cite[Definition~6.1]{BJNB}; this follows from \cite[Proposition~6.4]{BJNB}. Then since $\BS{f}{S}(s)=(s+1)^2$, by \cite[Theorem~6.11]{BJNB}, we also have $\BS{f}{R}(s)=(s+1)^2$.

    To see that $\SBS{f}{R}(s) = (s+1)^2 (s+2)$, first we show that $\SBS{f}{R}(s) \ | \ (s+1)^2 (s+2)$ by exhibiting a functional equation with this corresponding polynomial. Observe that as in Example~\ref{ex Veronese}, the ring of differential operators on $R$ can be identified with sums of differential operators on $S$ of even degree.
    Then, we have the sandwich Bernstein-Sato functional equation
    \[\begin{aligned} (s+2) \partial_x \partial_y f^{s+1} + (s+2) f^{s+1} \partial_x \partial_y - \partial_x \partial_y f^{s+1} (x \partial_x + y \partial_y) + x \partial_y f^{s+1} \partial_x^2 &+ y \partial_x f^{s+1} \partial_y^2 \\&= (s+1)^2(s+2) f^s. \end{aligned}\]

On the other hand, by Proposition~\ref{prop-divides}, we have $(s+1)^2 \ |\ \SBS{f}{R}(s)$, so it only remains to show that $-2$ is a root of $\SBS{f}{R}(s)$; for this, we will apply
Proposition~\ref{prop:charp-to-roots}. 

To this end, let $K$ be a perfect field of characteristic $p>0$. We will first exhibit an $R^p$-module decomposition of $R$. Let $I=R x + Ry \subseteq S$;  $I$ is isomorphic to the ideal $(x^{2},xy)R$.
Note first that
\[ R = \bigoplus_{\substack{0\leq a,b\\ a+b \ \text{even}}} K x^a y^b \quad \text{and} \quad I = \bigoplus_{\substack{0\leq a,b\\ a+b \ \text{odd}}} K x^a y^b. \]  

There is a direct sum decomposition of $R$ as an $R^p$-module into indecomposables given by
\[ R = \bigoplus_{\substack{0\leq a,b < p \\ a+b \ \text{even}}} x^a y^b R^p  
\ \oplus \ 
\bigoplus_{\substack{0\leq a,b < p \\ a+b \ \text{odd}}} x^a y^b I^p, \]
where $I^p$ is the $R^p$ submodule of $S^p$ consisting of $p$th powers of elements of $I$.

 Write \[F= \bigoplus_{\substack{0\leq a,b < p \\ a+b \ \text{even}}} x^a y^b R^p \ \  \text{and} \ \ A=\bigoplus_{\substack{0\leq a,b < p \\ a+b \ \text{odd}}} x^a y^b I^p.\]
Note that \[ \cJ= \{ \phi \in \End_{R^p}(R) \ | \ \phi(A) \subseteq \m_{{R}^p}R\} \]
is a two-sided ideal: indeed, if $\phi\in 
\cJ$ and $\psi\in \End_{R^p}(R)$, then
\[ \psi \phi (A) \subseteq \psi(\m_{R^p}{R}) \subseteq \m_{R^p}R, \]
and 
\[ \phi \psi(A) \subseteq \phi(\m_{R^p}F \oplus A) \subseteq \m_{R^p}R. \]
Then we have $x^p y\in A$ and \[(xy)^{p-2} x^p y =x^{2p-2} y^{p-1} \notin (x^{2p}, x^p y^p, y^{2p}) = \m_{R^p}R,\]
but since $A\subseteq (x,y)(x^p,y^p)R = (x^{p+1},x^p y,xy^p,y^{p+1})$,
\[ (xy)^{p-1} A \subseteq (x^{2p},x^{2p-1}y^p,x^p y^{2p-1}, y^{2p}) \subseteq \m_{R^p}R.\]
Thus, $(xy)^{p-2}\notin\cJ$, while $(xy)^{p-1}\in \cJ$. Since this is the case for every prime $P$, the hypotheses of Proposition~\ref{prop:charp-to-roots} hold with the polynomial $r(p)=p-2$. We deduce that $-2$ is indeed a root of the sandwich Bernstein-Sato polynomial of $f$.
\end{example}

\begin{example}
Let $R=K[x^2,xy,y^2]$ and $f=x^2$.
For $K$ of characteristic zero, we have $\BS{f}{R}(s) = (s+1)(s+\frac{1}{2})$ following similar lines as the previous example. However, the equality $\SBS{f}{R}(s) = \BS{f}{R}(s)$ holds for this element. It suffices to exhibit a functional equation with this corresponding polynomial $(s+1)(s+\frac{1}{2})$. Indeed, we have
\[  \frac{1}{4} \partial_x^2 f^{s+1} - \frac{1}{2} \partial_x \partial_y f^{s+1} y \partial_x + \frac{1}{2} y\partial_x f^{s+1} \partial_x \partial_y + \frac{1}{4} f^{s+1} \partial_x^2 = (s+1)(s+\frac{1}{2}) f^s. \]
\end{example}

\section{Bernstein's inequality for the Segre product of projective spaces}

We will apply the theory of sandwich Bernstein-Sato polynomials to show that this ring $R$ also satisfies Bernstein's inequality.

\begin{proposition}
For a field $K$ of characteristic zero and
\[ R:=K[x_iy_j\, |\, 1\leq i \leq a, 1\leq j\leq b] \subseteq S:= K[x_1,\ldots, x_a, y_1,\ldots, y_b],\] the element $x_1 y_1$ admits a nonzero sandwich Bernstein-Sato polynomial with roots $-1,-a,-b$.
\end{proposition}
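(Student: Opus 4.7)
The plan is to construct an explicit sandwich Bernstein-Sato functional equation for $f=x_1y_1$ whose polynomial is $(s+1)(s+a)(s+b)$; the minimal SBS polynomial then divides this, so its roots are among $\{-1,-a,-b\}$ (and in fact equal this set by a reduction-mod-$p$ argument in the spirit of Proposition~\ref{prop:charp-to-roots} if needed).

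The main ingredients will be two identities in $D_R$. First, using Lemma~\ref{lem44} together with the explicit iterated brackets $[\partial_{x_1}\partial_{y_1}]^{1}_f = x_1\partial_{x_1}+y_1\partial_{y_1}+1$ and $[\partial_{x_1}\partial_{y_1}]^{2}_f = 2f$ (both of which I would verify by direct Weyl-algebra computation), I will establish
\[
\partial_{x_1}\partial_{y_1}\cdot f^{s+1} \;=\; f^{s+1}\partial_{x_1}\partial_{y_1} + (s+1)f^{s}\bigl(x_1\partial_{x_1}+y_1\partial_{y_1}+s+1\bigr).
\]
Second, setting $E^x=\sum_i x_i\partial_{x_i}$ and $E^y=\sum_j y_j\partial_{y_j}$ (equal in $D_R$ by the defining relation, with common value $E$), I will prove the Capelli-type identity
\[
\sum_{i,j}\partial_{x_i}\partial_{y_j}\cdot f^{s+1}\cdot x_iy_j \;=\; (E^x+a)(E^y+b)\cdot f^{s+1}
\]
by expanding both sides in $D_S$: write $u=x_1^{s+1}$, $v=y_1^{s+1}$, and show $\sum_{i,j}\partial_{x_i}\partial_{y_j}(ux_ivy_j)=uv(E^x+s+a+1)(E^y+s+b+1)$, then use $f^{s+1}(E+c)=(E+c-s-1)f^{s+1}$ to rewrite.

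To extract the SBS polynomial $(s+1)(s+a)(s+b)$, I will combine these identities via the bracket $[\partial_{x_1}\partial_{y_1},\,(E+a)(E+b)f^{s+1}]$, which is already in sandwich form (it is the difference of $\partial_{x_1}\partial_{y_1}(E+a)(E+b)\cdot f^{s+1}\cdot 1$ and $(E+a)(E+b)\cdot f^{s+1}\cdot\partial_{x_1}\partial_{y_1}$). The commutation $\partial_{x_1}\partial_{y_1}(E+c)=(E+c+1)\partial_{x_1}\partial_{y_1}$ (which follows from $[\partial_{x_1}\partial_{y_1},E]=\partial_{x_1}\partial_{y_1}$) allows this bracket to be expanded using the first identity; the $f^{s+1}\partial_{x_1}\partial_{y_1}$ cross-terms cancel, leaving an expression that, after further sandwich manipulations using equation $(\dagger)$ substituted in the form $(E+a)(E+b)f^{s+1}=\sum_{i,j}\partial_{x_i}\partial_{y_j}f^{s+1}x_iy_j$ to absorb residual first-order terms, simplifies to $(s+1)(s+a)(s+b)f^{s}$.

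The main obstacle is the careful bookkeeping to ensure the equation holds as an equality in $D_R$ (not merely after evaluating on $1\in R$): one must track residual terms of the form $f^{s}(x_1\partial_{x_1}+y_1\partial_{y_1})\cdot\eta$ and arrange the combination so they are rewritten as genuine sandwiches $\alpha f^{s+1}\beta$ using the identity $[x_k\partial_{x_1},f^{s+1}]=(s+1)f^{s}x_ky_1$ and its $y$-analogue (derived from Lemma~\ref{lem44} with $[x_k\partial_{x_1}]^{1}_f=x_ky_1$, $[x_k\partial_{x_1}]^{2}_f=0$). The factors $(s+a)$ and $(s+b)$ arise intrinsically from the reductive-group structure: they are the eigenvalues by which the Capelli-like operators $(E+a)$ and $(E+b)$ act on the highest-weight-like vector $1\in R$, reflecting the dimensions of the two projective factors, while the factor $(s+1)$ is the classical Bernstein-Sato contribution from $\partial_{x_1}\partial_{y_1}$.
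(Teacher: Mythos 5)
Your general strategy matches the paper's: combine iterated brackets of Euler/Capelli-type operators with $f^{s+1}$ to produce an explicit sandwich functional equation, then use a reduction-mod-$p$ argument in the style of Proposition~\ref{prop:charp-to-roots} to certify that $-a$ and $-b$ are genuinely roots. However, the polynomial $(s+1)(s+a)(s+b)$ that you aim to realize cannot appear in \emph{any} sandwich Bernstein--Sato equation for $f=x_1y_1$, so the first step of your plan fails as stated.

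The obstruction is Proposition~\ref{prop-divides}: $\BS{f}{R}(s)$ divides $\SBS{f}{R}(s)$, which in turn divides every $b(s)$ that appears in a sandwich equation. Here $\BS{x_1y_1}{R}(s)=(s+1)^2$, not $(s+1)$: indeed $\partial_{x_1}\partial_{y_1}(x_1y_1)^{s+1}=(s+1)^2(x_1y_1)^s$ in $S$, and $R$ is a differentially extensible direct summand of $S$, so the Bernstein--Sato polynomial is unchanged. Hence $(s+1)^2$ must divide any admissible $b(s)$, and your ``classical Bernstein--Sato contribution from $\partial_{x_1}\partial_{y_1}$'' is $(s+1)^2$, not $(s+1)$. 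Your own first identity already shows this: evaluating $[\partial_{x_1}\partial_{y_1},f^{s+1}]=(s+1)f^s\bigl(x_1\partial_{x_1}+y_1\partial_{y_1}+s+1\bigr)$ at $1\in R$ gives $(s+1)^2 f^s$. So when you carry out the bookkeeping you describe, you will necessarily find at least one extra factor of $(s+1)$.

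For comparison, the paper's explicit computation produces the polynomial $(s+1)^3(s+a)(s+b)$: it writes $(s+1)^3(s+A)(s+B)$ as a right linear combination in $\C[s,A,B,E,F]$ of the polynomials coming from the brackets $\cA_i$, $\cB_j$, $\cC$, $\cD$ --- each of which already carries a factor of $(s+1)$ --- and then substitutes $E\mapsto x_1\partial_{x_1}$, $F\mapsto y_1\partial_{y_1}$, $A\mapsto a$, $B\mapsto b$. The paper explicitly leaves open whether the multiplicity of $-1$ in the true sandwich Bernstein--Sato polynomial is $2$ or $3$. You should redo your computation aiming for a polynomial of the form $(s+1)^k(s+a)(s+b)$ with $k\geq 2$; the remainder of your argument (reduction mod $p$ to show $-a$ and $-b$ are roots, with $-1$ being automatic from $\BS{f}{R}\mid\SBS{f}{R}$) then proceeds exactly as in the paper, and the upper bound on the root set that you wanted follows from that corrected equation.
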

\begin{proof}
Let $K$ be a field of characterstic zero.
First we exhibit a nonzero sandwich Bernstein-Sato equation. We compute for $i,j>1$,
\[ \begin{aligned}
\cA_i &:= \big[ x_i \partial_{x_1} , [ \partial_{x_i} \partial_{y_1}, (x_1y_1)^{s+1}] \big] = (x_1y_1)^s \big( (s+1)^2 x_i \partial_{x_i} - (s+1) x_1 \partial_{x_1}\big),\\
\cB_j &:= \big[ y_j \partial_{y_1} , [ \partial_{y_j} \partial_{x_1}, (x_1y_1)^{s+1}] \big] = (x_1y_1)^s \big( (s+1)^2 y_j \partial_{y_j} - (s+1) y_1 \partial_{y_1}\big).
\end{aligned}
\]
We then have 
\[ \sum_{i=2}^a \cA_i - \sum_{j=2}^b \cB_j = (x_1y_1)^s \big(
(s+1)(s+a) x_1 \partial_{x_1} - (s+1)(s+b) y_1 \partial_{y_1} \big)
 \]
 in $D_R$, using that $\sum_{i=1}^a x_i \partial_{x_i} - \sum_{j=1}^b y_j \partial_{y_j} =0$.

 We compute further
\[\begin{aligned}
\cC := [ \partial_{x_1} \partial_{y_1} , (x_1y_1)^{s+1}] &= (x_1y_1)^s \big((s+1) x_1 \partial_{x_1} + (s+1) y_1 \partial_{y_1} + (s+1)^2\big) \\
\cD:=[ \partial_{x_1} \partial_{y_1} , x_1 \partial_{x_1}(x_1y_1)^{s+1}] &= (x_1y_1)^s \big((s+1)(x_1 \partial_{x_1})^2+(s+2)(x_1 \partial_{x_1})(y_1 \partial_{y_1})\\&\null \qquad\qquad\qquad+2(s+1)^2(x_1 \partial_{x_1}) + (s+1)^2(y_1 \partial_{y_1})+(s+1)^3\big). \\
\end{aligned}\]
Observe that the operators $E:=x_1 \partial_{x_1}$ and $F:=y_1 \partial_{y_1}$ commute in $D_R$. 

One verifies easily that in the polynomial ring $\C[s,A,B,E,F]$, the polynomial\\ ${(s+1)^3(s+A)(s+B)}$ is an element of the ideal generated by 
\[ \begin{aligned} \{(s+1)(s+A) &E - (s+1)(s+B) F, (s+1)E + (s+1) F + (s+1)^2,\\ &(s+1) E^2 + (s+1) EF + 2 (s+1)^2 E + (s+1)^2 F + (s+1)^3\}; \end{aligned}\]
 in particular, the polynomial $(s+1)^3(s+A)(s+B)$ is a right $\C[s,A,B,E,F]$-linear combination of the generators above. Since $x_1 \partial_{x_1}$ and $y_1 \partial_{y_1}$ commute in the ring $D_R$, there is a ring homomorphism from $\C[s,A,B,E,F]$ to $D_R$ mapping $A\mapsto a$, $B\mapsto b$, $E\mapsto x_1 \partial_{x_1}$, and $F\mapsto y_1\partial_{y_1}$. Applying this homomorphism, we obtain that $(x_1 y_1)^s (s+1)^3(s+a)(s+b)$ is a right $D_R$-linear combination of $\cC, \cD, \cA_i\, (2\leq i \leq a),$ and $\cB_j\, (2\leq j \leq b)$. In particular, there is a sandwich Bernstein-Sato functional equation with polynomial $(s+1)^3(s+a)(s+b)$.

Now we claim that $-1,-a,-b$ are all roots of the sandwich Bernstein-Sato polynomial of $x_1y_1$. That $-1$ is a root is clear, e.g., since the Bernstein-Sato polynomial divides the sandwich Bernstein-Sato polynomial. To show that $-a$ and $-b$ are roots, we will apply Proposition~\ref{prop:charp-to-roots}. Henceforth, we let $K$ be a perfect field of characteristic $p>0$.

Set $I_i$ to be the $R$-submodule of $S$ generated by the monomials in the $y$ variables of degree~$i$ and~$J_j$ to be the $R$-submodule of $S$ generated by the monomials in the $x$ variables of degree~$j$. Note that $I_i$ is isomorphic to the ideal $(x_1y_1,\dots,x_1y_b)^i R$, and $J_j$ is isomorphic to the ideal $(x_1y_1,\dots,x_ay_1)^j R$.

We proceed to describe an $R^p$-module decomposition of $R$. We will use tuple notation for monomials: $\alpha$ denotes an $a$-tuple of nonnegative integers, $|\alpha|$ the sum of its entries, $||\alpha||$ its largest entry (and similarly $\beta$ denotes a $b$-tuple, etc.). As $K$-vectorspaces, we have
\[ R = \bigoplus_{|\alpha| = |\beta|} K \cdot x^{\alpha} y^{\beta}, \qquad 
I_i = \bigoplus_{|\alpha| + i = |\beta|} K \cdot x^{\alpha} y^{\beta}, \quad \text{and} \quad J_j = \bigoplus_{|\alpha| = |\beta|+i} K \cdot x^{\alpha} y^{\beta}.\]

We have an $R^p$-module decomposition
\[ R =  F \oplus \bigoplus_{0< i < a} A_i \oplus \bigoplus_{0< j < b} B_j,\]
where
\[ F= \bigoplus_{\substack{|\alpha| = |\beta| \\  ||\alpha||,||\beta||<p}} R^p x^\alpha y^\beta, \qquad
A_i =  \bigoplus_{\substack{|\alpha| = |\beta| + ip \\ ||\alpha||,||\beta||<p}} I_i^p  x^\alpha y^\beta, \quad \text{and} \quad
B_j = \bigoplus_{\substack{ |\alpha| = |\beta| - jp \\ ||\alpha||,||\beta||<p}} J_j^p  x^\alpha y^\beta,\]
and $I_i^p$ and $J_j^p$ denote the $R^p$ submodules of $S^p$ given by the set of $p$th powers of $I_i$ and $J_j$, respectively. 

We claim that $\phi(I_i) \subseteq \m M$ for each $\phi\in \Hom_R(I_i,M)$, where $M= R$, $M=J_j$, or $M=I_k$ with $k<i$. Since each module is fine graded (i.e., graded in the $\N^{a+b}$ grading where each monomial is weighted by its exponent vector), it suffices to show that there is no fine graded map that sends monomial minimal generators to monomial minimal generators. Since any proper quotient of $I_i$ has rank zero and each choice of $M$ is torsionfree, any nonzero map $\phi$ must be injective. If $M=R$ or $M=I_k$ with $k<i$, then $M$ has fewer minimal generators than $I_i$, so no such injective map exists. If $M=J_j$, note the relation $(x_1 y_2) (y_1^i) = (x_1 y_1) (y_1^{i-1}y_2)$  between generators of $I_i$ does not hold for any pair of monomial generators of $J_j$. This establishes the claim.

It follows that \[ \cJ := \{ \phi \in \Hom_{R^p}(R,R) \ | \ \phi(A_{a-1}) \subseteq \m^{[p]}R\}\]  is a two-sided ideal of $\Hom_{R^p}(R,R)$, since for $\phi\in \cJ$ and $\psi\in \Hom_{R^p}(R,R)$ we have
\[ \psi \phi( A_{a-1}) \subseteq \psi (\m^{[p]} R) \subseteq \m^{[p]} R \quad \text{and}  \quad \phi \psi (A_{a-1}) \subseteq \phi(A_{a-1} + \m^{[p]} R ) \subseteq \m^{[p]} R.\]

Assume that $p\geq a$.
Then $(x_1 y_1)^{p-a} \notin \cJ$, since $x_1^{a-1} x_2^{p-1} \cdots x_a^{p-1} y_1^{(a-1)p} \in A_{a-1}$. On the other hand $|\alpha|=|\beta|+(a-1)p$ and $||\alpha||<p$ implies that each coordinate of $\alpha$, in particular the first coordinate, is at least $a-1$. Every monomial of $I_{a-1}^p$ has a $y$-variable with an exponent at least $p$. Consequently, $(x_1 y_1)^{p-a+1}$ times any monomial in $A_{a-1}$ lies in $\m^{[p]}R$, so $(x_1 y_1)^{p-a+1} \in \cJ$.

Thus, by Proposition~\ref{prop:charp-to-roots}, $-a$ is a root of the sandwich Bernstein-Sato polynomial of $x_1 y_1$ over $R$. Similarly, $-b$ is a root as well.
\end{proof}

\begin{remark}
    In the example above, the Bernstein-Sato polynomial of $x_1y_1$ in $R$ equals that in $S$, since $R$ is a differentially extensible direct summand of $S$ in the sense of \cite[Definition~6.1]{BJNB}; see \cite[Lemma~7.3]{BJNB}. Thus, the Bernstein-Sato polynomial is $(s+1)^2$. On the other hand, the sandwich Bernstein-Sato polynomial is either $(s+1)^2(s+1)(s+b)$ or $(s+1)^3(s+1)(s+b)$ so the Bernstein-Sato polynomial and sandwich Bernstein-Sato polynomial differ in this example; we do not know whether the multiplicity of $-1$ as a root here is two or three.
\end{remark}

We conclude by applying the theory developed to establish various finiteness properties of for the ring of differential operators of the Segre product of polynomial rings.

\begin{theorem}\label{thm:Segre}
    Let $a,b>1$ and $R=\C[ x_i y_j \ | \ 1\leq i\leq a, 1\leq j \leq b]$ the the coordinate ring of the Segre embedding of $\P^{a-1}\times \P^{b-1}$. Then

    \begin{enumerate}
        \item The ring $D_{R|\C}$ equipped with any Bernstein filtration $\cBb$ is linearly simple.
        \item If $M$ is a nonzero $D_{R|\C}$-module, and $\cFb$ is a filtration on $M$ compatible with $\cBb$, then $\Dim(M,\cFb)\geq \dim(R)$.
        \item Every nonzero element of $R$ has a nonzero sandwich Bernstein-Sato polynomial.
    \end{enumerate}
\end{theorem}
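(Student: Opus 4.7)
The plan is to bootstrap from the preceding proposition, which exhibits a nonzero sandwich Bernstein--Sato functional equation for $f := x_1 y_1$ whose polynomial divides $(s+1)^3(s+a)(s+b)$. Since $a,b>1$ by hypothesis, the roots $-1,-a,-b$ are all strictly negative, so $f$ admits an SBS polynomial with no nonnegative integer roots; moreover $f$ is a nonzerodivisor because $R$ is a domain.

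Next I would verify that $f$ lies in the singular locus of $R$, meaning that $R_f$ is regular. Setting $u_j := (x_1 y_j)/(x_1 y_1)$ and $v_i := (x_i y_1)/(x_1 y_1)$ inside $R_f$, the identity $x_i y_j = v_i u_j (x_1 y_1)$ shows
\[ R_f \;=\; \C[v_2,\dots,v_a,\, u_2,\dots,u_b,\, x_1y_1,\, (x_1y_1)^{-1}], \]
a Laurent polynomial ring in one variable over a polynomial ring in $a+b-2$ variables, which is smooth of dimension $a+b-1 = \dim(R)$.

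With these checks in hand, part~(1) follows immediately from Theorem~\ref{SBSimpliesLS}: a nonzerodivisor in the singular locus admitting an SBS polynomial with no nonnegative integer roots forces $D_{R|\C}$ to be linearly simple in every Bernstein filtration. Part~(2) follows from Corollary~\ref{thm-main-BI} applied to the same $f$: the ring $R$ is a Bernstein algebra, so $\Dim(M,\cFb) \geq \dim(R)$ for every nonzero filtered $D_{R|\C}$-module $(M,\cFb)$ compatible with $\cBb$. Finally, part~(3) is a direct application of Corollary~\ref{cor:toric}: since $R$ is a finitely generated monomial subalgebra of the polynomial ring $\C[x_1,\dots,x_a,y_1,\dots,y_b]$ and $D_{R|\C}$ is linearly simple by part~(1), every nonzero element of $R$ admits a nonzero sandwich Bernstein--Sato polynomial.

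The real obstacle in proving this theorem — producing the SBS equation for $x_1 y_1$ and certifying that $-a$ and $-b$ appear as roots via reduction modulo $p$ and Proposition~\ref{prop:charp-to-roots} — is already handled by the preceding proposition. What remains here is the routine smoothness check on $R_f$ together with the invocation of Theorem~\ref{SBSimpliesLS}, Corollary~\ref{thm-main-BI}, and Corollary~\ref{cor:toric}, so the Segre theorem is essentially a packaging of the machinery built up in the previous sections.
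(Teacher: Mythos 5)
Your proof is correct and follows essentially the same route as the paper: invoke the preceding proposition to get an SBS polynomial for $x_1 y_1$ with only strictly negative roots, observe that $R_{x_1y_1}$ is regular, apply Theorem~\ref{SBSimpliesLS} for part~(1), and apply Corollary~\ref{cor:toric} for part~(3). The only cosmetic difference is that for part~(2) the paper cites Corollary~\ref{cor:dir} (via $R$ being a direct summand of a polynomial ring) whereas you cite Corollary~\ref{thm-main-BI} directly; both lead to the same conclusion, and your explicit check that $R_{x_1y_1}$ is a Laurent polynomial ring is a nice bit of detail the paper leaves implicit.
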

\begin{proof}
For the first statement, notice first that $R_{x_1y_1}$ is regular. Since $x_1y_1$ has a sandwich Bernstein-Sato polynomial with no nonnegative integer roots, by Theorem~\ref{SBSimpliesLS}, any Bernstein filtration $\cBb$ on $D_{R|K}$ is linearly simple.

Since $R$ is a direct summand of a polynomial ring (e.g., as the zeroth graded piece of $K[x_1,\dots,x_a,y_1,\dots,y_b]$ under the grading that gives each $x$ variable degree one and each $y$ variable degree $-1$), the second statement follows from the first by Corollary~\ref{cor:dir}.

The final statement follows from the first statement by Corollary~\ref{cor:toric}.
\end{proof}

\section*{Acknowledgements}

We thank Luis N\~u\~nez Betancourt and Anurag K.~Singh for comments on a draft of this article.

\newcommand{\etalchar}[1]{$^{#1}$}

\end{document}